\def\centerarc[#1](#2)(#3:#4:#5);%
\DeclareSymbolFontAlphabet{\mathbb}{AMSb}
\DeclareSymbolFontAlphabet{\mathbbl}{bbold}
\numberwithin{equation}{section}
\numberwithin{figure}{section}
\theoremstyle{plain}
\newtheorem{thm}{\protect\theoremname}[section]
  \theoremstyle{definition}
  \newtheorem{example}[thm]{\protect\examplename}
  \theoremstyle{remark}
  \newtheorem{rem}[thm]{\protect\remarkname}
  \theoremstyle{plain}
  \newtheorem{prop}[thm]{\protect\propositionname}
  \theoremstyle{plain}
  \newtheorem{cor}[thm]{\protect\corollaryname}
  \theoremstyle{definition}
  \newtheorem{defn}[thm]{\protect\definitionname}
  \theoremstyle{plain}
  \newtheorem{lem}[thm]{\protect\lemmaname}
  \theoremstyle{plain}
  \newtheorem{conj}[thm]{\protect\conjecturename}
\theoremstyle{plain}
  \newtheorem{prob}[thm]{\protect\problemname}
\theoremstyle{plain}
\newtheorem*{thm*}{\protect\theoremname}
\theoremstyle{definition}
\theoremstyle{definition}
\newtheorem*{defn*}{\protect\definitionname}
\theoremstyle{definition}
\newtheorem*{thx}{Acknowledgments}
  \newtheoremstyle{dotless}{}{}{\itshape}{}{\bfseries}{}{ }{}
  \theoremstyle{dotless}
  \newtheorem{conjec}[thm]{\protect\conname}
\def\RR{\mathbb{R}}
\def\CC{\mathbb{C}}
\def\QQ{\mathbb{Q}}
\def\PP{\mathbb{P}}
\def\ZZ{\mathbb{Z}}
\def\HH{\mathbb{H}}
\def\DD{\mathbb{D}}
\def\VV{\mathbb{V}}
\def\aff{\mathbb{A}}
\def\ay{\mathbf{i}}
\def\cE{\mathcal{E}}
\def\cx{\mathcal{X}}
\def\cs{\mathcal{S}}
\def\ch{\mathcal{H}}
\def\ci{\mathcal{I}}
\def\co{\mathcal{O}}
\def\cv{\mathcal{V}}
\def\cu{\mathcal{U}}
\def\cz{\mathcal{Z}}
\def\cC{\mathcal{C}}
\def\gr{\mathrm{Gr}}
\def\IH{\mathrm{IH}}
\def\I{\mathrm{I}}
\def\um{\underline{m}}
\def\uo{\underline{0}}
\def\uz{\underline{z}}
\def\ux{\underline{x}}
\def\ua{\underline{a}}
\def\uL{\underline{\lambda}}
\def\ul{\underline{\ell}}
\def\ut{\underline{t}}
\def\tx{\tilde{X}}
\def\xc{\mathrm{X}^{\circ}}
\def\NP{\mathbbl{\Delta}}
  \providecommand{\corollaryname}{Corollary}
  \providecommand{\definitionname}{Definition}
  \providecommand{\examplename}{Example}
  \providecommand{\lemmaname}{Lemma}
  \providecommand{\propositionname}{Proposition}
 \providecommand{\problemname}{Problem}
  \providecommand{\remarkname}{Remark}
\providecommand{\theoremname}{Theorem}
\providecommand{\conjecturename}{Conjecture}
\providecommand{\conname}{Conjecture}
\newcommand{\comb}[2]{\left( {#1} \atop {#2} \right)}
\begin{document}

\title{Ap\'ery Extensions}

\author[V. Golyshev]{Vasily Golyshev}
\address{ICTP Math Section, Strada Costiera 11, Trieste 34151, Italy; and Algebra and Number Theory Lab, Institute for Information Transmission Problems, Bolshoi Karetny 19, Moscow 127994, Russia}
\email{golyshev@mccme.ru}

\author[M. Kerr]{Matt Kerr}
\address{Department of Mathematics, Washington University in St.~Louis, 1 Brookings Drive, Campus Box 1146, St. Louis, MO 63130-4899}
\email{matkerr@wustl.edu}

\author[T. Sasaki]{Tokio Sasaki}
\address{Department of Mathematics, University of Miami, 1365 Memorial Drive, Ungar building, Coral Gables, FL 33146-2508}
\email{txs885@miami.edu}

\begin{abstract}
The \emph{Ap\'ery numbers} of Fano varieties are asymptotic invariants of their quantum differential equations.  In this paper, we initiate a program to exhibit these invariants as (mirror to) limiting extension classes of higher cycles on the associated Landau-Ginzburg models --- and thus, in particular, as periods.  We also construct an \emph{Ap\'ery motive}, whose mixed Hodge structure is shown, as an application of the decomposition theorem, to contain the limiting extension classes in question.

Using a new technical result on the inhomogeneous Picard-Fuchs equations satisfied by higher normal functions, we illustrate this proposal with detailed calculations for LG-models mirror to several Fano threefolds.  By describing the ``elementary'' Ap\'ery numbers in terms of regulators of higher cycles (i.e., algebraic $K$-theory/motivic cohomology classes), we obtain a satisfying explanation of their arithmetic properties.  Indeed, in each case, the LG-models are modular families of $K3$ surfaces, and the distinction between multiples of $\zeta(2)$ and $\zeta(3)$ (or $(2\pi\mathbf{i})^3$) translates ultimately into one between algebraic $K_1$ and $K_3$ of the family.
\end{abstract}

\maketitle
\section{Introduction}

Over the years since Ap\'ery's discovery of a proof of irrationality of $\zeta(3)$
in 1979, his recursions 
$$n^3u_n-(34n^3-51n^2+27n-5)u_{n-1}+(n-1)^3 u_{n-2}=0, \eqno (A3)$$
and
$$n^2u_n-(11n^2-11n+3) u_{n-1}-(n-1)^2 u_{n-2}=0, \eqno (A2)$$
originally viewed as interesting individuals 
rather than members of an important species --- 
to follow Siegel's distinction ---
have gradually come to be seen 
as being the two most basic instances of Fuchsian DEs
arising from  
the class of extremal Calabi--Yau pencils which appears  in
various subjects in algebraic and arithmetic geometry. This makes them 
natural models for case studies of this class.
In each of the two  cases, 
a unique normalized integral solution $\{ a_n \}$ exists, namely 
$a_n^{(A3)}= \sum_{k=0}^n \comb{n}{k}^2 \comb{n+k}{k}^2 $, and
$a_n^{(A2)}= \sum_{k=0}^n \comb{n}{k}^2 \comb{n+k}{k}$.  By considering 
the solutions $\{ b_n \}$ with 
the initial conditions
$b_0 =0, \, b_1 =1$, and the rate of convergence of the ratios $b_n/a_n$ 
to the limits 
$$\lim_{n \to \infty} \dfrac{b_n^{(A3)}}{a_n^{(A3)}} = \frac{1}{6}\zeta(3) \;\;\;\;\;\text{and}\;\;\;\;\;
\lim_{n \to \infty} \dfrac{b_n^{(A2)}}{a_n^{(A2)}} =\frac{1}{5}\zeta(2),$$
Ap\'ery was able to deduce the irrationality of $\zeta (3)$ and improve 
the known order of irrationality of $\zeta (2)$.  Now, to make a story, we put $a_n^{(B3)} = \comb{2n}{n} a_n^{(A2)}$.
Clearly, the $\{a_n^{(B3)}\}$
satisfy
$$n^3u_n-(22n^3 - 33n^2 + 17n - 3)u_{n-1}+16 (n-3/2)(n-1)(n-1/2) u_{n-2}=0, \eqno (B3)$$
and the limit of the ratio $b_n^{(B3)}/a_n^{(B3)}$ is $\frac{1}{10}\zeta (2)$.
Passing to the generating series $\Phi(t) =  \sum c_n t^n$ with $c_n = a_n \text{ or } b_n$ 
and translating recursions into differential equations, we have  
$$ (D-1) L^{(A3)}   \Phi^{(A3)}(t)=0 \;\;\;\text{resp.}\;\;\;(D-1) L^{(B3)}  \Phi^{(B3)}(t)=0$$
with
$$
L^{(A3)} =
{D}^{3}-t\left (2\,D  +1 \right  )\left (
17\,{D}^{2}+17\,D+5\right )+{t}^{2}
\left (D+1\right )^{3}
$$
and  
$$
L^{(B3)}
= 
{D}^{3}-2\,t\left (2\,D + 1 \right )
\left (11\,{D}^{2}+11\,D+3\right )-4\,
{t}^{2}\left (D+1\right )\left (2\,D+3
\right )\left (2\,D + 1 \right )
$$

The differental operators $L^{(A3)}$ and $L^{(B3)}$ have quite a lot in common:

\begin{itemize}

\item  quite naively, both are linear third-order differential operators 
of degree 2 in $t$;

\item more specifically, both are operators of D3 type \cite{GolyshevStienstra2007}, 
which implies that the $i^{\text{th}}$ coefficient in $t$ is odd as a polynomial
in $D+i/2$;

\item  each is a Picard--Fuchs operator which controls the variation of polarized Hodge 
structure arising in the relative $H^2$ in a Shioda--Inose pencil of K3 surfaces;

\item both are \emph{extremal}, in the sense that the parabolic cohomology groups $\mathrm{IH}^1(\mathbb{P}^1,\text{--})$ of the associated local systems vanish;

\item both are modular, in the sense that in each case the solution 
$\sum_{n=0}^\infty  a_n t^n$ can be identified 
with the expansion  of a certain weight 2 level $N$  Eisenstein series with respect 
to a Hauptmodul on $X_0^*$ ($N=6 \text{ and } 5$, respectively);

\item each is the \emph{regularized quantum differential operator} of a Fano complete intersection in a $G/P$ (namely, a 7--fold hyperplane 
section of $OG(5,10)$ for $(A3)$ and an intersection of two hyperplanes and 
a quadric in $G(2,5)$ for $(B3)$).  
It is this interpretation of $(A3)$ and $(B3)$ as Picard--Fuchs operators 
in the Landau--Ginzburg models of  such `Mukai threefolds' that we 
are concerned with in the present paper.

\end{itemize}  
Yet, in spite of all these similarities, the \emph{Ap\'ery limit} $b_n/a_n$ is proportional 
to  $\zeta (3)$ in the first case resp. to $\zeta (2)$ in the second.   Two natural questions arise:
\smallskip

\bf (1) \rm  On the A--side of mirror symmetry, does the Ap\'ery limit reflect the geometry of a Fano?   
Inspecting the five cases  of Mukai threefolds $V_{2n}, \, n=5, \ldots, 9$, one notices that the cases 
with $\lim_{n\to\infty} b_n/a_n$ of weight 3 match the rational Fanos $V_{12}, \, V_{16}$ and $V_{18}$,
while those of weight 2 correspond to the irrational $V_{10}$ and $V_{14}$. 

\bf (2) \rm  On the B--side, what properties of the Landau--Ginzburg pencil control 
the weight of the Ap\'ery limit?

\smallskip
A conceptual answer to both questions, unconditional  for \bf (2) \rm 
but  HMS--contingent for \bf (1)\rm, is that it is ultimately the structure of the `south pole'
($t=\infty$)
fiber of the LG pencil that is responsible  for the weight of the Ap\'ery limit. Indeed, by HMS for Fanos, 
the structure of a Landau--Ginzburg model in the neighborhood of the south pole is expected 
to be related to the \emph{residual category} \cite{KS20}, i.e. the semiorthogonal complement to the subcategory 
of $D^b_{\mathrm{coh}} (F)$ generated by an incomplete exceptional collection. For a rational
threefold the properties of the residual category are closer, in a sense, to those of
the $D^b_{\mathrm{coh}}$ of a \emph{bona fide} algebraic variety than in the irrational cases.
One notices a similararity here with how the intermediate Jacobian of a threefold can 
have properties inconsistent with being the full motive of a curve.
Nevertheless, one can unconditionally relate the structure of  
 the south-pole fiber to the Lefschetz decomposition of the cohomology 
of the \emph{Fano unsections} -- ambient varieties in which our Fano
sits as a hyperplane section or a complete intersection. If the first non--trivial 
primitive class occurs in $H^4$, rather than in $H^6$ or higher, 
the Ap\'ery constant is going to have 
weight 2, in agreement with the gamma conjecture. 
    
The present paper offers mainly a Hodge--theoretic answer to \bf (2)\rm. 
In a nutshell, the structure of the south-pole fiber, or more precisely, 
the type of the south--pole LMHS, dictates the Hodge type of the 
Hodge module whose underlying differential operator is $(D-1)L$:
the $\zeta (3)$ (and $L(\chi_{-3},3)$) cases correspond to extensions 
encoded by a class in relative $H^3_{\mathcal{M}}(\mathrm{K3},\QQ (3))$,  
while the $\zeta (2)$ cases correspond to extensions
associated with $H^3_{\mathcal{M}}(\mathrm{K3},\QQ (2))$. In $K$--theoretic 
terms, the distinction is between the $K_3$ and $K_1$ groups 
of relative K3 surfaces respectively.  A more general, heuristic view
that suggests itself is this:  just as the Fano $F$
knows the quantum cohomology of its unsection, the LG-model of $F$
remembers the asymptotics of periods of the LG-model of its unsection, expressed
as solutions to \emph{inhomogeneous} PF-equations $L(-)=g(t)$ which underlie generalized
normal functions.

\medskip

\bf Arithmetic Mirror Symmetry. \rm   
Despite a smattering of examples in recent years \cite{MW,DK1,JW,DK2}, the role of algebraic cycles and their invariants in mirror symmetry remains something of a mystery.  The above discussion suggests a new link in the context of Fano/LG-model duality, whose formulation (backed up by nontrivial evidence) is a principal goal of this paper.

One of the features of \emph{local} mirror symmetry uncovered in \cite{DK2,BKV2} was the entrance of \emph{mixed} Hodge structures, whose extension classes are described on the B-model side by regulators on algebraic $K$-theory.  These same regulator classes, called \emph{higher normal functions} when they occur in families, are at the heart of the second author's interpretation \cite{Ke17} of Ap\'ery's irrationality proofs for $\zeta(2)$ and $\zeta(3)$.  It was in an effort to ``recombine'' this with the first author's enumerative, A-model interpretation \cite{Gv} of Ap\'ery's recurrence (see also \cite{Gk}), that the animating slogan of this paper suggested itself:

\vspace{3mm}
\noindent\textbf{Arithmetic Mirror Symmetry Problem:} \emph{For each Fano
$n$-fold $F$ admitting a toric degeneration, show that its Ap\'ery
numbers arise as 
\begin{itemize}
\item limits of \textup{(}classical and higher\textup{)} normal functions produced by cycles
on a 1-parameter family of CY $(n{-}1)$-folds defined over $\bar{\QQ}$,
\end{itemize}
together with 
\begin{itemize}
\item extension classes in the monodromy-invariant part of a limiting MHS of the family.
\end{itemize}
\vspace{3mm}}

\noindent 
A more detailed statement of this problem may be found in \S\ref{SIV2}.

While computations by G. da Silva \cite{dS2} appeared to support this line of thinking for the \emph{rational} Fano 3-folds in \cite{Gv}, there initially seemed to be little hope for the Ap\'ery numbers $\tfrac{1}{10}\zeta(2),\tfrac{1}{7}\zeta(2)$
of the non-rational Fanos $V_{10},V_{14}$, with neither the ``deresonation off the motivic setting'' nor the ``quantum Satake'' argument on the irregular side
offering an explanation of these numbers as periods. 
Moreover, the model of \cite{Ke17}, in its limitation to $K_n^{\text{alg}}$ of CY $(n-1)$-folds, could only produce rational multiples of $(2\pi \ay)^{3}$ or $\zeta(3)$ if $n=3$.  However, a new paradigm began to emerge around two years ago, allowing a much greater variety of cycles to enter.  Our main result is thus the following affirmative solution to the Arithmetic Mirror Symmetry Problem (more precisely, to Problem \ref{c52}):

\begin{thm}\label{tm}
The Ap\'ery numbers of the five Mukai Fano threefolds\footnote{These are, by definition, the rank-one Fano 3-folds arising as complete intersections in the Grassmannians of simple Lie groups other than projective spaces \cite{Gv}; they are $V_{10},\,V_{12},\,V_{14},\,V_{16}$, and $V_{18}$.} are limits of higher normal functions arising from motivic cohomology classes on associated Landau-Ginzburg models.
\end{thm}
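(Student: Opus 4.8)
The plan is to treat each of the five Mukai threefolds $V_{10}, V_{12}, V_{14}, V_{16}, V_{18}$ by the same four-step scheme, and to carry out the scheme case by case since the cycles involved differ in $K$-theoretic degree. First I would fix, for each $F = V_{2n}$, an explicit Landau--Ginzburg model: a Laurent polynomial $f$ (or toric LG pencil) whose quantum differential operator is the regularized operator $(D{-}1)L$ appearing in the introduction, and whose generic fibre $X_t = \{f = t\}$ is a $K3$ surface in the stated modular family (level $6$ or $5$, or the $\Gamma_0^*(6)$/$\Gamma_0^*(5)$ Shioda--Inose families). This is classical for these five cases; the point is to have $f$ defined over $\bar{\QQ}$ and to pin down the degeneration data at $t = 0$ (MUM point) and at the ``south pole'' $t = \infty$, where the type of the limiting MHS will be read off.

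Second, for each case I would exhibit an explicit motivic cohomology class on the total space of the LG pencil --- concretely, a higher Chow cycle or a class in $H^3_{\mathcal{M}}(X_t, \QQ(r))$ built from the toric/combinatorial structure of $f$ (products of coordinate functions giving Milnor $K_2$-symbols, or a higher cycle in $CH^2(X_t, 3)$ for the $\zeta(3)$-type cases). Here $r = 3$ for $V_{12}, V_{16}, V_{18}$ and $r = 2$ for $V_{10}, V_{14}$, matching the Hodge-type dichotomy flagged in the introduction. The associated higher normal function $\nu(t) = \langle \text{AJ}(\text{cycle}), \omega_t \rangle$ is then a multivalued holomorphic function on the punctured $t$-line.

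Third --- and this is where the ``new technical result on inhomogeneous Picard--Fuchs equations'' assumed from earlier in the paper enters --- I would show that $\nu(t)$ satisfies an inhomogeneous equation $L \,\nu(t) = g(t)$ with $g(t) \in \bar{\QQ}(t)$ (in fact a simple rational function, often a constant times a monomial), by differentiating the Abel--Jacobi pairing and using that $L$ annihilates the periods $\omega_t$. Given such an inhomogeneous equation, the solution has a canonical expansion $\nu(t) = \sum b_n t^n$ at the MUM point $t = 0$ normalized by $b_0 = 0$, and the Frobenius/monodromy analysis of $(D{-}1)L$ forces $b_n/a_n$ to converge to the special value of $\nu$ at the other singular point --- precisely the Ap\'ery limit. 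The weight-$2$ versus weight-$3$ outcome then follows from which $\QQ(r)$ the class lives in, i.e.\ from the south-pole LMHS computation, exactly as predicted; this is the payoff of the decomposition-theorem/Ap\'ery-motive discussion. I would also invoke the modularity of the family to identify the limiting value explicitly with $\tfrac16\zeta(3)$, $\tfrac1{10}\zeta(2)$, etc.\ (Eisenstein series of weight $2$, resp.\ the Beilinson regulator of $K_3(\text{modular }K3)$ giving $L(\chi_{-3},3)$ or $\zeta(3)$), matching the numerical constants in the introduction.

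The main obstacle I expect is \emph{the existence and explicit description of the cycle in the non-rational cases} $V_{10}$ and $V_{14}$: producing an actual motivic class in $H^3_{\mathcal{M}}(\mathrm{K3}, \QQ(2))$ on the LG pencil whose higher normal function has the right inhomogeneous PF-equation and whose limit is the prescribed multiple of $\zeta(2)$. For the rational cases the relevant $K_1$-type (really $K_3$) constructions are closer to the framework of \cite{Ke17} and \cite{dS2}; but as the introduction emphasizes, the $\zeta(2)$-Fanos resisted the earlier approaches, so the real work is in the new construction of lower-degree cycles and in verifying --- via the inhomogeneous-PF machinery and the south-pole LMHS --- that their regulators land on the Ap\'ery numbers. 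A secondary technical point is checking that the inhomogeneous term $g(t)$ and all the transcendental constants are genuinely defined over $\bar{\QQ}$, so that the limits qualify as periods in the precise sense of Problem \ref{c52}.
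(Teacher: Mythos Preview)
Your four-step scheme matches the paper's strategy, and you correctly flag $V_{10},V_{14}$ as the hard cases. But several load-bearing details are misdescribed.

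First, the $K$-theoretic degrees are off. For the $\zeta(3)$-type cases $V_{12},V_{16},V_{18}$ the paper uses classes in $\mathrm{CH}^3(\cx\setminus\tilde X_\infty,3)$ (i.e.\ $K_3$), not $\mathrm{CH}^2(-,3)$; concretely, the coordinate symbol $\{x_1,x_2,x_3\}$ is pulled back through a birational involution or correspondence $\mathcal I$ exchanging $t\leftrightarrow (M_N t)^{-1}$, which is how one obtains a cycle nonsingular at $t=0$ but singular at $t=\infty$. For $V_{10},V_{14}$ the cycles live in $\mathrm{CH}^2(\cx\setminus\tilde X_\infty,1)$ (i.e.\ $K_1$), not $K_2$: they are explicit sums $\sum(\mathcal C_i,g_i)$ of rational curves in the toric base locus together with rational functions whose divisors cancel.

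Second, and more seriously, your step-3 mechanism is backwards. The paper does \emph{not} normalize the THNF to have vanishing constant term, nor does it evaluate at the south pole. Theorem~\ref{t51} bounds the inhomogeneous term so that $LV=-\mathfrak{k}t$; then the canonical normalization of Remark~\ref{r51a} selects the unique branch of $V$ with maximal radius of convergence about $t=0$. With this choice the value $\hat V(0)$ at the \emph{north pole} is itself the Ap\'ery constant: since $L\hat V=t=LB$ and $A$ is the only holomorphic solution of $L(\cdot)=0$, one has $\hat V=B+\hat V(0)\,A$, and the radius-of-convergence normalization forces $\hat V(0)=\alpha$. The south-pole computation you propose is exactly what Remark~\ref{r6a} identifies as nonviable when $T_\infty$ is non-unipotent, which is the situation for $V_{10}$ and $V_{14}$.

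Third, modularity is not invoked. The identifications $\hat V(0)=\tfrac1{10}\zeta(2)$, $\tfrac16\zeta(3)$, etc.\ come from direct computation of $v_0$ and $v_1$ as elementary iterated integrals over explicit real chains (dilogarithm-type for $V_{10},V_{14}$; Beukers-type for $V_{12}$; residue computations on $\tilde X_\infty$ for $V_{16},V_{18}$). The point of Theorem~\ref{t51} is precisely that these two coefficients already determine $g(t)=-\mathfrak{k}t$ and hence everything.
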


The Theorem is proved in \S\S\ref{SIVA}-\ref{SIVC} (modulo a detail deferred to \S\ref{SV}), using a new result on inhomogeneous Picard-Fuchs equations satisfied by higher normal functions (Theorem \ref{t51}).  In \S6 we propose a theory of ``Ap\'ery extensions'' on the B-model side, which encompasses these examples, and highlight some implications of an affirmative solution to Problem \ref{c52}.  In \S\S\ref{SI}-\ref{SIII} we place our story in context, recalling the mixed Hodge theory of GKZ systems and local mirror symmetry, quantum $\mathcal{D}$-modules and Ap\'ery constants of Fanos, and higher normal functions on Landau-Ginzburg models.  In the rest of this Introduction, we would like to convey the idea of what an Ap\'ery extension is and why it is important.

Let $\cx\overset{\pi}{\to}\PP^{1}$ be a family
of compact CY $(n-1)$-folds with smooth total space and fibers\footnote{written $\tx_t$ in the body of the paper} $X_t=\pi^{-1}(t)$,
smooth off $\Sigma=\{0,t_{1},\ldots,t_{c},\infty\}$.
Laurent polynomials $\phi(\underline{x})\in\bar{\QQ}[x_{1}^{\pm1},\ldots,x_{n}^{\pm1}]$
with reflexive Newton polytope $\Delta$ are a key source for such
families, with $\cx$ obtained by blowing up $\PP_{\Delta}$
along $\overline{\{\phi=0\}}\cap(\PP_{\Delta}\backslash\mathbb{G}_{m}^{n})$,
and $\pi$ extending $1/\phi$. In particular, the \emph{mirror
LG-model of a Fano $F$ degenerating to} $\PP_{\Delta^{\circ}}$
(like those in \cite{Gv,Gk}) arises in this way.  

The cohomologies of the fibration $\mathcal{X}_{\cu}\overset{\pi_{\cu}}{\to}\cu:=\PP^{1}\backslash\Sigma$
produce VHSs $\ch^{\ell}=\ch_{f}^{\ell}\oplus\ch_{v}^{\ell}$
with ``fixed'' and ``variable'' parts. At each $\sigma\in\Sigma$,
we have the LMHS functor $\psi_{\sigma}$ and monodromies $T_{\sigma}$.
All our families will have maximal unipotent monodromy at the ``north
pole'' $\sigma=0$;\footnote{``North'' refers to the infinity point of the Landau-Ginzburg potential; since we work primarily in a neighborhood of this point, however, it is $t=0$ for us.} for simplicity, here we also assume\footnote{in the Introduction, but not in the body of the paper} $\mathrm{rk}(T_{\sigma}-I)=1$
if $\sigma\neq0,\infty$, and $\ker\{H^{n}(X_{\sigma})\to\psi_{\sigma}\ch^{n}\}=\{0\}$
$(\forall\sigma)$.  Then we may write $\mathsf{A}^{\dagger}_{\phi}:=H^{n}(\cx\backslash X_0,X_{\infty})$
as an extension
\begin{equation} \label{eqapery}
0\to(\psi_{\infty}\ch_{v}^{n-1})^{T_{\infty}}\to\mathsf{A}^{\dagger}_{\phi}\to\mathrm{IH}^{1}(\PP^{1}\backslash\{0\},\ch_{v}^{n-1})\to0.
\end{equation}
of MHS.  Now the Ap\'ery numbers of $F$ record limits of ratios 
of solutions to its quantum difference equation (Definition \ref{d33a}); and a first approximation to the 
Problem is to find them in the extension class of \eqref{eqapery}.

Unfortunately, extension classes of MHS do not produce well-defined
numbers.  For instance, we have $\mathrm{Ext}_{\mathrm{MHS}}^{1}(\QQ(-a),\QQ(0))\cong\CC/\QQ(a)$,
which (say) would make $\tfrac{1}{10}\zeta(2)$ trivial in $\CC/\QQ(2)$.
This is where writing them \emph{as limits of admissible normal functions}\footnote{Admissible normal functions are reviewed in \S\ref{SIIIB} below.} enters:
if \eqref{eqapery} arises as $\lim_{t\to 0}\nu(t)$
for some $\nu\in\mathrm{ANF}(\ch_{v}^{n-1}(r))$, and $k:=\mathrm{rk}((\psi_{0}\ch_{v}^{n-1})^{T_{0}})$,
then $\nu$ has a \emph{unique} lift $\tilde{\nu}$ on the disk $|t|<|t_{k+1}|$
to a single-valued section of $\mathcal{H}_{v}^{n-1}$. Pairing this
with a suitable section $\omega\in\Gamma(\PP^{1},\mathcal{F}^{n-1}\mathcal{H}_{v,e}^{n-1})$
yields a \emph{truncated higher normal function} (THNF) $V(t)=\langle\tilde{\nu},\omega\rangle$ whose
first $k$ Taylor coefficients in $t$ are well-defined
complex numbers refining the information in $\mathsf{A}^{\dagger}_{\phi}$.
So \emph{higher normal functions get us from extension data to constants}, and this
is why it is better to state the Problem in terms of their limits.  But which HNF to choose?  Here are two candidates.

Consider the VMHS $\mathcal{A}_{\phi}^{\sigma}:=H^{n}(\cx\backslash X_{\sigma},X_t)$
over $U$ ($\sigma=0$ or $\infty$). As an extension it reads\begin{equation}\label{ext} 0\to\ch_{v}^{n-1}\to\mathcal{A}_{\phi}^{\sigma}\to\mathrm{IH}^{1}(\PP^{1}\backslash\{\sigma\},\ch_{v}^{n-1})\to0 ,
\end{equation}in which the $\mathrm{IH}$ term is a \emph{constant} VMHS. Taking
first $\sigma=0$, $\mathsf{A}^{\dagger}_{\phi}=(\psi_{\infty}\mathcal{A}_{\phi}^0)^{T_{\infty}}$
is recovered as the ``south pole'' limit of $\mathcal{A}_{\phi}^{0}$.
If $\ch_{v}^{n-1}$ is extremal (cf. \S\ref{SV}) with Hodge numbers all $1$, then $\mathrm{IH}^{1}(\PP^{1}\backslash\{0\},\ch_{v}^{n-1})\cong\QQ(-n)$,
and \eqref{ext} gives a normal function in $\mathrm{ANF}(\ch_{v}^{n-1}(n))$.
According to the Beilinson-Hodge conjecture (cf. Conjecture \ref{c42a} below), this should come from a ``$K_{n}$'' cycle (in $\mathrm{CH}^{n}(\cx\setminus X_{0},n)$),
recovering the paradigm of \cite{Ke17}.

The alternate ($\sigma=\infty$) perspective is to view $\mathsf{A}^{\dagger}_{\phi}=[(\psi_{0}\mathcal{A}_{\phi}^{\infty})^{T_{0}}]^{\vee}(-n)$
as a ``north pole'' limit. 
This can make a huge difference, since
$\mathcal{A}_{\phi}^{\infty}$ and $\mathcal{A}_{\phi}^{0}$
\emph{are not dual} in general (although the invariant parts of
their limits are). Indeed, for any morphism $\QQ(-a)\overset{\mu}{\hookrightarrow}\mathrm{IH}^{1}(\PP^{1}\backslash\{\infty\},\ch_{v}^{n-1})$,
the $\mu^{*}$-pullback 
\begin{equation}\label{apext}
0\to \ch^{n-1}_v\to \mu^*\mathcal{A}^{\infty}_{\phi}\to \QQ(-a)\to 0
\end{equation}
of \eqref{ext} belongs to $\mathrm{ANF}(\ch_{v}^{n-1}(a))$,
and (by Beilinson-Hodge again) should arise from a ``$K_{2a-n}$''
cycle (in $\mathrm{CH}^{a}(\cx\backslash X_{\infty},2a-n)$).  It is the
extensions of VMHS \eqref{apext} that we call \emph{Ap\'ery extensions} (Definition \ref{d6c}).
In the families of $K3$s mirror to $V_{10}$ and $V_{14}$, we have
$a=2$, and the corresponding normal functions do indeed arise from torically natural
$K_{1}$-cycles whose THNFs have the north pole limits $\tfrac{1}{10}\zeta(2),\tfrac{1}{7}\zeta(2)$.
This change in perspective came as a revelation since, for these and similar cases,
the south-pole approach is not computationally viable (Remark \ref{r6a}).


\begin{thx}
VG and MK are grateful to the members of the International groupe de travail on differential equations in Paris for the stimulating discussions of some of the material of this paper, and the Max Planck Institute for Mathematics for support during our visits there.  MK also thanks A. Harder, B. Lian and T. Pantev for relevant discussions.  This work was partially supported by Simons Collaboration Grant 634268 and NSF Grant DMS-2101482 (MK).
\end{thx}

\section{Generic Laurent polynomials} \label{SI}

\subsection{GKZ system} \label{SIA}

Fix a vector $\ua\in \CC^{N+1}$ and a convex polytope $\NP\subset \RR^{n+1}$ containing the origin, with vertices in $\ZZ^{N+1}$.  The corresponding toric variety $\PP_{\NP}$ compactifies $\mathbb{G}_m^{N+1}$ (with coordinate $\ux$).
Let $\mathsf{M}\subseteq \ZZ^{N+1}$ denote the monoid generated by $\mathfrak{M}:=\NP\cap(\ZZ^{N+1}\setminus\{\uo\})$ and $\mathbb{L}\subset \ZZ^{|\mathfrak{M}|}$ the lattice of relations; we assume for simplicity that $\mathsf{M}^{\text{gp}}=\ZZ^{N+1}$ and $\mathsf{M}=\ZZ^{N+1}\cap\text{Cone}_{\uo}(\NP)$.
The coefficients $\uL$ of the generic Laurent polynomial  $f(\ux)=\sum_{\um\in\mathfrak{M}} \lambda_{\um}\ux^{\um}$ parametrize the affine parameter space on which we define the \emph{GKZ system} of partial differential operators:
\begin{equation}\label{e21a}
\left\{ 
\begin{array}{cc}
Z_i = \sum_{\um\in\mathfrak{M}}m_i\delta_{\lambda_{\um}}+a_i & (i=0,\ldots,N)
\\
\square_{\ul}=\prod_{\ell_{\um}>0}\partial_{\lambda_{\um}}^{\ell_{\um}}-\prod_{\ell_{\um}<0}\partial^{-\ell_{\um}}_{\lambda_{\um}} & (\ul\in\mathbb{L}\subset \ZZ^{|\mathfrak{M}|})
\end{array}
\right.	
\end{equation}
\begin{prop}\label{p21a}
For each relative cycle $\mathscr{C}$ on $(\PP_{\NP}\setminus\{f=0\},\DD_{\NP}\setminus\{f=0\})$, the function
\begin{equation} \label{e21b}
\mathscr{P}_{\mathscr{C}}(\uL)=\int_{\mathscr{C}}\ux^{\ua}e^{f(\ux)}\mathrm{dlog}(\ux)
\end{equation}
is a (local) solution of \eqref{e21a}.
\end{prop}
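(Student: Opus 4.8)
The plan is a direct verification: differentiate \eqref{e21b} under the integral sign and integrate by parts. Write $\theta=\ux^{\ua}e^{f(\ux)}\mathrm{dlog}(\ux)$, a holomorphic $(N{+}1)$-form on $\mathbb{G}_{m}^{N+1}\subset\PP_{\NP}$, and let $v_{i}=x_{i}\partial_{x_{i}}$ be the invariant vector fields, so that $\mathcal{L}_{v_{i}}\mathrm{dlog}(\ux)=0$ and $v_{i}(\ux^{\ua})=a_{i}\ux^{\ua}$. Since $\partial_{\lambda_{\um}}f=\ux^{\um}$ we have $\partial_{\lambda_{\um}}\theta=\ux^{\um}\theta$; restricting to a small polydisc in $\uL$-space on which \eqref{e21b} and all of its $\uL$-derivatives converge absolutely --- this is what ``local solution'' buys us --- differentiation under the integral is legitimate throughout, and the GKZ operators \eqref{e21a} then act directly on the integrand.

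For the box operators, iterating $\partial_{\lambda_{\um}}\mathscr{P}_{\mathscr{C}}=\int_{\mathscr{C}}\ux^{\um}\theta$ shows that, for $\ul\in\mathbb{L}$, the operators $\prod_{\ell_{\um}>0}\partial_{\lambda_{\um}}^{\ell_{\um}}$ and $\prod_{\ell_{\um}<0}\partial_{\lambda_{\um}}^{-\ell_{\um}}$ carry $\mathscr{P}_{\mathscr{C}}$ to $\int_{\mathscr{C}}\ux^{\sum_{\ell_{\um}>0}\ell_{\um}\um}\,\theta$ and $\int_{\mathscr{C}}\ux^{\sum_{\ell_{\um}<0}(-\ell_{\um})\um}\,\theta$ respectively, and the defining relation $\sum_{\um}\ell_{\um}\um=\uo$ equates the two exponents, so $\square_{\ul}\mathscr{P}_{\mathscr{C}}=0$. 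For the Euler operators, $\delta_{\lambda_{\um}}\mathscr{P}_{\mathscr{C}}=\lambda_{\um}\int_{\mathscr{C}}\ux^{\um}\theta$ combined with $\sum_{\um}m_{i}\lambda_{\um}\ux^{\um}=v_{i}(f)$ gives
\[
Z_{i}\mathscr{P}_{\mathscr{C}}=\int_{\mathscr{C}}\bigl(v_{i}(f)+a_{i}\bigr)\ux^{\ua}e^{f}\,\mathrm{dlog}(\ux)=\int_{\mathscr{C}}v_{i}\bigl(\ux^{\ua}e^{f}\bigr)\,\mathrm{dlog}(\ux)=\int_{\mathscr{C}}\mathcal{L}_{v_{i}}\theta,
\]
using $v_{i}(\ux^{\ua}e^{f})=(a_{i}+v_{i}(f))\ux^{\ua}e^{f}$ and $\mathcal{L}_{v_{i}}\mathrm{dlog}(\ux)=0$. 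By Cartan's formula $\mathcal{L}_{v_{i}}\theta=d(\iota_{v_{i}}\theta)+\iota_{v_{i}}(d\theta)=d(\iota_{v_{i}}\theta)$, since $d\theta=0$ ($\theta$ is holomorphic of top degree on an $(N{+}1)$-fold), so Stokes' theorem yields $Z_{i}\mathscr{P}_{\mathscr{C}}=\int_{\partial\mathscr{C}}\iota_{v_{i}}\theta$, and it remains to see this boundary term vanishes.

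That last point is where the real content lies, and I expect it to be the main obstacle. Since $\mathscr{C}$ is a relative cycle, $\partial\mathscr{C}\subset\DD_{\NP}\setminus\{f=0\}$, so it suffices to check that $\iota_{v_{i}}\theta=\pm\,\ux^{\ua}e^{f}\bigwedge_{j\ne i}\tfrac{dx_{j}}{x_{j}}$ pulls back to $0$ on each toric stratum of $\DD_{\NP}$. Working in local monomial coordinates $(y_{1},\dots,y_{N+1})$ adapted to a ray $\rho$ of the normal fan, with primitive generator $v_{\rho}$ and with the stratum cut out by $y_{1}=0$, one has $\mathrm{dlog}(\ux)=\pm\bigwedge_{k}\tfrac{dy_{k}}{y_{k}}$ and $v_{i}=\sum_{k}A_{ik}\,y_{k}\partial_{y_{k}}$ with $A\in\mathrm{GL}(N{+}1,\ZZ)$; contracting, the terms involving $\partial_{y_{j}}$ for $j\ne1$ pull back to $0$ because they retain the factor $dy_{1}$, while the $j=1$ term carries $y_{1}^{\langle\ua,v_{\rho}\rangle}$, which kills it on $\{y_{1}=0\}$ under the (non-resonance) positivity of $\langle\ua,v_{\rho}\rangle$ implicit in ``local solution.'' The most conceptual packaging, which I would ultimately prefer, is to read $\mathscr{P}_{\mathscr{C}}$ as a period pairing of the class $[\theta]$ in the twisted logarithmic de Rham cohomology of $(\PP_{\NP}\setminus\{f=0\},\DD_{\NP}\setminus\{f=0\})$ against the dual homology class of $\mathscr{C}$: the computations above then already show $Z_{i}[\theta]=0=\square_{\ul}[\theta]$ modulo exact forms, and the remaining work is purely analytic bookkeeping --- absolute convergence of \eqref{e21b} on a polydisc, compatibility of $\mathscr{C}$ with the excised divisor $\{f=0\}$, and the vanishing of $\int_{\partial\mathscr{C}}\iota_{v_{i}}\theta$.
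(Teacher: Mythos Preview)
Your proof is essentially identical to the paper's: differentiate under the integral sign, obtain the monomial cancellation for $\square_{\ul}$, and for $Z_i$ recognize the integrand as the exact form $d[\ux^{\ua}e^{f}\mathrm{dlog}(\ux_{\hat{\imath}})]$ (which is exactly your $d(\iota_{v_i}\theta)$ via Cartan's formula). The paper's ``Check'' simply asserts $\int_{\mathscr{C}}d[\cdots]=0$ without discussing the boundary contribution, so your analysis of $\int_{\partial\mathscr{C}}\iota_{v_i}\theta$ on toric strata goes beyond what the paper provides.
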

\begin{proof}[Check:]
Applying $Z_i$ to $\mathscr{P}_{\mathscr{C}}$ gives
\[
\int_{\mathscr{C}}\ux^{\ua}(\delta_{x_i}f+a_i)e^f \mathrm{dlog}(\ux)=\int_{\mathscr{C}}d[\ux^{\ua}e^f\mathrm{dlog}(\ux_{\hat{i}})]=0,
\]
while applying $\square_{\ul}$ yields
\[
\int_{\mathscr{C}}\ux^{\ua}(\ux^{\sum_{\um\colon\ell_{\um}>0}\ell_{\um}\um}-\ux^{\sum_{\um\colon\ell_{\um}<0}(-\ell_{\um})\um})e^f\mathrm{dlog}(\ux)=0.
\]The solutions are (analytically) local because the cycles $\mathscr{C}$, and hence their period integrals $\mathscr{P}$, have monodromy about divisors in $\aff^{|\mathfrak{M}|}$.
\end{proof}
Since the corresponding $\mathcal{D}=\CC[\uL,\partial_{\uL}]$-module
\begin{equation} \label{e21c}
\tau_{\mathrm{GKZ}}^{\ua,\NP}:=\mathcal{D}/\mathcal{D}\langle\{Z_i\},\{\square_{\ul}\}\rangle
\end{equation}
is holonomic \cite[Thm 3.9]{Ad}, the (local) solutions module
\begin{equation} \label{e21d}
\mathrm{Hom}_{\mathcal{D}}(\tau,\hat{\co}_{\uL^{0}})\simeq \mathrm{Hom}_{\CC}(\CC_{\uL^0}\otimes _{\CC[\uL]}\tau,\CC)	
\end{equation}
at a point $\uL^0\in\CC^{|\mathfrak{M}|}$ is finite-dimensional.  
We shall think of \eqref{e21c} and \eqref{e21d} as ``cohomology'' and ``homology'' respectively, motivated by the parametrization of solutions by relative cycles; this will be made more precise in $\S$\ref{SIC}.

\subsection{Periods and residues} \label{SIB}

Given $\um\in\ZZ^{N+1}$, write $\deg(\um)=:\kappa$ for the minimal $\kappa\in\ZZ_{\geq 0}$ such that $\kappa\NP\ni \um$.  The ring $R=\CC[\uL][\ux^{\mathfrak{M}}]$, its Jacobian ideal $J_f=(\{\partial_{x_i}f\}_{i=0}^N )$, and the Jacobian ring $R/J_f$ are thereby graded by degree.  Moreover, sending $p(\ux)\mapsto p(\ux)\ux^{\ua}e^{f(\ux)}\mathrm{dlog}(\ux)$ induces a grading on $\tau_{\text{GKZ}}^{\NP}$ and a graded isomorphism
\begin{equation}\label{e22a}
\mathrm{gr}(R/J_f) \overset{\cong}{\underset{\mathrm{gr}(\mathcal{D})}{\longrightarrow}}\mathrm{gr}(\tau_{\text{GKZ}}^{\NP}).
\end{equation}
Specializing $\uL$ to a very general point $\uL^0$ (and hence $R$ to $R^0=\CC[\ux^{\mathfrak{M}}]$), the graded pieces have dimensions
\begin{equation}\label{e22b}
\dim_{\CC}(R^0/J_f)_{(k)}=\sum_{j=0}^{N+1}(-1)^j \textstyle{\binom{N+1}{j}}\dim(R^0_{(k-j)}),
\end{equation}
(where $\dim (R^0_{(k-j)})$ counts the points of degree $k-j$ in $\mathsf{M}$) with sum over $k$
\begin{equation}\label{e22c}
\dim_{\CC}(R^0/J_f)=(N+1)!\,\text{vol}(\NP).\end{equation}
See \cite[(5.3) and Cor. 5.11]{Ad}.

\subsubsection*{Irregular case in mirror symmetry:}
A polytope $\Delta\subset\RR^{n}$ with integer vertices is \emph{reflexive} iff its polar polytope $\Delta^{\circ}$ also has integer vertices; this implies that both have $\uo$ as unique interior integer point.  Fixing such a $\Delta$, take $\NP:=\Delta$ and $\ua=\uo$.  (Note that $N=n-1$.)  Then we have a graded isomorphism of A- and B-model $\mathcal{D}$-modules
\begin{equation}\label{e22d}
QH^*(\PP_{\Delta^{\circ}})	\underset{\mathrm{gr}}{\cong}\tau^{\Delta}_{\text{GKZ}}
\end{equation}
with the grading by $\tfrac{\deg}{2}$ on the left-hand side (see \cite{Ir2}).

\begin{example}\label{ex22a}
Let $\Delta$ be the triangle in the figure.  Choose $\uL$ so that the cycle $\mathbb{T}^2 \cong S^1\times S^1$ 
\\
\noindent\begin{minipage}{0.19\textwidth}
\begin{tikzpicture}
\draw[black,thick,fill=cyan] (-1,-1) -- (1,0) -- (0,1) -- (-1,-1);
\draw[gray,thick] (-1.3,0) -- (1.3,0);
\draw[gray,thick] (0,-1.3) -- (0,1.3);
\filldraw[black] (-1,-1) circle (2pt) node[anchor=east] {$\lambda_3$};
\filldraw[black] (1,0) circle (2pt) node[anchor=north] {$\lambda_1$};
\filldraw[black] (0,1) circle (2pt) node[anchor=east] {$\lambda_2$};
\end{tikzpicture}
\end{minipage}
\begin{minipage}{0.02\textwidth}
\end{minipage}
\begin{minipage}{0.79\textwidth}
given by $|x_1|=|x_2|=1$ avoids the zero-locus of $f=\lambda_1 x_1+\lambda_2 x_2+\lambda_3 x_1^{-1}x_2^{-1},$ so that $\mathscr{C}=\mathbb{T}^2$ is a relative cycle.  By \eqref{e22b}-\eqref{e22c}, the rank of $\tau$ is 3, with three graded pieces each of rank 1.  Computing the period in \eqref{e21b} now gives
\begin{align*}
\tfrac{1}{(2\pi\ay)^2}\mathscr{P}=\tfrac{1}{(2\pi\ay)^2}\int_{\mathbb{T}^2} e^f \mathrm{dlog}(\ux) = \tfrac{1}{(2\pi\ay)^2}\sum_{n\geq 0}\tfrac{1}{n!}\int_{\mathbb{T}^2}f^n \mathrm{dlog}(\ux) =\sum_{m\geq 0}\tfrac{(\lambda_1\lambda_2\lambda_3)^m}{(m!)^3},
\end{align*}
\end{minipage}

\noindent which is an irregular/exponential period.  In particular, we see that $\tau_{\text{GKZ}}^{\Delta}$ does not underlie a classical VHS or VMHS.
\end{example}

\subsubsection*{Regular case in mirror symmetry:}
With $\Delta$ as above, take $\NP\subset \RR^{1+n}$ to be the convex hull of the origin and $\{1\}\times \Delta$ (so that now $N=n$), and put $\ua:=(1,\uo)$.  We denote the resulting GKZ system by $\hat{\tau}_{\text{GKZ}}^{\Delta}$.  It has the same rank as $\tau_{\text{GKZ}}^{\Delta}$ since $\text{vol}(\NP)=\tfrac{1}{(n+1)!}\text{vol}(\Delta)$.  Rather than being isomorphic, the two are related (roughly) by Fourier-Laplace transform; and (as will be explained in \S\ref{SIC}) we have an isomorphism of $\mathcal{D}$-modules
\begin{equation}\label{e22e}
QH_c^{*(+2)}(K_{\PP_{\Delta^{\circ}}})\underset{\text{gr}}{\cong}\hat{\tau}_{\text{GKZ}}^{\Delta}	,
\end{equation}
where $K_{\PP_{\Delta^{\circ}}}$ is the total space of the canonical line bundle on $\PP_{\Delta^{\circ}}$.

Now let $\phi_{\uL}(\ux)=\sum_{m\in \Delta\cap\ZZ^n} \lambda_{\um}\ux^{\um}$ be a general Laurent polynomial on $\Delta$ and
$\Gamma$ a relative $n$-cycle in $(\PP_{\Delta}\setminus\{\phi=0\},\DD_{\Delta}\setminus\{\phi=0\})$.  With $f=x_0\phi(\ux)$ and $\mathscr{C}=\RR_-\times \Gamma$, the periods in \eqref{e21b} take the form
\begin{equation}\label{e22f}
\mathscr{P}=\int_{\mathscr{C}}x_0 e^f \tfrac{dx_0}{x_0}\wedge\mathrm{dlog}(\ux)=\int_{\Gamma}\left(\int_{-\infty}^0 e^{x_0\phi}dx_0\right)\mathrm{dlog}(\ux) = \int_{\Gamma}\tfrac{\mathrm{dlog}(\ux)}{\phi(\ux)}
= 2\pi\ay\int_{\gamma}\mathrm{Res}_{\phi=0}\left(\tfrac{\mathrm{dlog}(\ux)}{\phi(\ux)}\right)\;
\end{equation}
$\text{if}\;\Gamma=\text{Tube}(\gamma)\;\text{for}\;\gamma\subset\{\phi=0\}.$  In particular, these are (regular) periods of a variation of mixed Hodge structure.

\begin{example}\label{ex22b}
With $\Delta$ as in Ex. \ref{ex22a}, $\NP$ is the tetrahedron in the figure. Taking $\Gamma=\mathbb{T}^2$ in
\\
\noindent\begin{minipage}{0.28\textwidth}
\begin{tikzpicture}
\draw[black,thick,fill=cyan] (-1.5,-0.77) -- (0,-1) -- (1,0);
\draw[black,thick,fill=cyan] (-1.5,-0.77) -- (1,0) -- (0.5,0.77) -- (-1.5,-0.77);
\draw[gray,thick] (-1.3,-1) -- (1.3,-1);
\draw[gray,thick] (-0.5,-1.77) -- (0.5,-0.23);
\draw[gray,thick] (0,-1.8) -- (0,0.7);
\filldraw[black] (-1.5,-0.77) circle (2pt) node[anchor=east] {$\lambda_3$};
\filldraw[black] (1,0) circle (2pt) node[anchor=west] {$\lambda_1$};
\filldraw[black] (0.5,0.77) circle (2pt) node[anchor=west] {$\lambda_2$};
\filldraw[black] (0,0) circle (2pt) node[anchor=east] {$\lambda_0$};
\draw[black,thick,dotted] (0.5,0.77) -- (0,-1);
\end{tikzpicture}
\end{minipage}
\begin{minipage}{0.72\textwidth}
\eqref{e22f} and writing $t=\tfrac{\lambda_1\lambda_2 \lambda_3}{\lambda_0^3}$, Cauchy residue gives for $|t|<\tfrac{1}{27}$
\begin{align*}
\tfrac{1}{(2\pi\ay)^2}\mathscr{P}=\int_{\mathbb{T}^2}\frac{\mathrm{dlog}(\ux)/(2\pi\ay)^2}{\lambda_0 + (\lambda_1 x_1 + \lambda_2 x_2 + \lambda_3 x_1^{-1}x_2^{-1})}= \frac{1}{\lambda_0}\sum_{m\geq 0}\frac{(3m)!}{(m!)^3} t^m.
\end{align*}
Since $\mathrm{rk}(\hat{\tau}^{\Delta}_{\text{GKZ}})=3$, one expects 3 distinct periods related to the geometry of the family of elliptic curves $E_t=\overline{\{\phi_{\uL}(\ux)=0\}},$ which
\end{minipage}

\noindent has a type $\mathrm{I}_9$ singular fiber at $t=0$.  

Fix $\lambda_0=1$.  If $\{\alpha,\beta\}$ is a symplectic basis for $H_1(E_t)$, with $\alpha$ vanishing at $t=0$, we can take $\Gamma$ to be $\text{Tube}(\alpha)\simeq \mathbb{T}^2$ ($\mathscr{P}$ holomorphic in $t$), $\text{Tube}(\beta)$ ($\mathscr{P}\sim \tfrac{9}{2\pi\ay}\log(t)$), or $\sigma:=\RR_- \times \RR_-$ ($\mathscr{P}\sim \tfrac{9}{2(2\pi\ay)^2}\log^2(t)$).  Note that only the first two are ``periods of $E_t$''.
\end{example}

\subsection{Mixed Hodge theory of GKZ} \label{SIC}

Continuing with the ``regular case'' above, and recalling that $\DD_{\Delta}:= \PP_{\Delta}\setminus \mathbb{G}_m^n$, we set $X_{\uL}:=\overline{\{\phi_{\uL}(\ux)=0\}}$ and $\partial X_{\uL}:=X_{\uL}\cap \DD_{\Delta}$.  By Prop. \ref{p21a}, we know that at least some solutions of $\hat{\tau}_{\text{GKZ}}^{\Delta}$ are parametrized by the choice of $\Gamma\in H_n(\PP_{\Delta}\setminus X_{\uL},\DD_{\Delta}\setminus\partial X_{\uL})$, which (as a best-case scenario) suggests the following
\begin{thm}[\cite{HLYZ}]\label{t23a}
We have a canonical isomorphism
\begin{equation}\label{e23a}
\hat{\tau}_{\text{GKZ}}^{\Delta}\cong H^n(\PP_{\Delta}\setminus X_{\uL},\DD_{\Delta}\setminus \partial X_{\uL}),
\end{equation}
in which the $\mathcal{D}$-module structure on the RHS is defined by the Gauss-Manin connection.
\end{thm}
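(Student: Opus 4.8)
My plan is to exhibit the isomorphism concretely, as the ``de Rham companion'' of the period parametrization in Proposition~\ref{p21a}, and then to force bijectivity by a surjectivity argument together with a rank count. Working in the ``regular case'' notation ($f=x_0\phi_{\uL}(\ux)$, $\ua=(1,\uo)$, $\NP=\text{Cone}_{\uo}(\{1\}\times\Delta)$), let $\omega_0\in H^n(\PP_\Delta\setminus X_{\uL},\DD_\Delta\setminus\partial X_{\uL})$ be the class of the meromorphic $n$-form $\tfrac{\mathrm{dlog}(\ux)}{\phi_{\uL}(\ux)}$, equivalently $2\pi\ay$ times the residue form in \eqref{e22f}. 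I would define $\Psi\colon\hat{\tau}_{\text{GKZ}}^{\Delta}\to H^n(\PP_\Delta\setminus X_{\uL},\DD_\Delta\setminus\partial X_{\uL})$ by sending the cyclic generator $1\mapsto\omega_0$ and extending $\mathcal{D}$-linearly, the $\mathcal{D}$-module structure on the target being the Gauss--Manin connection in $\uL$. Dually, $\Gamma\mapsto\mathscr{P}_{\RR_-\times\Gamma}$ sends $H_n(\PP_\Delta\setminus X_{\uL},\DD_\Delta\setminus\partial X_{\uL})$ into the local solution space of $\hat{\tau}_{\text{GKZ}}^{\Delta}$ by Proposition~\ref{p21a}, and under Poincar\'e--Lefschetz duality for the pair and the Riemann--Hilbert identification of solutions with the $\mathcal{D}$-dual, $\Psi$ is the transpose of this period map; so proving $\Psi$ an isomorphism is equivalent to proving the period map one.

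The only thing to verify for $\Psi$ to be well defined is that each $\square_{\ul}$ and each $Z_i-a_i$ kills $\omega_0$ \emph{as a cohomology class} --- and this is exactly the ``Check'' after Proposition~\ref{p21a}, read one level up. There, applying $Z_i$ produced the integrand $d[\ux^{\ua}e^{f}\mathrm{dlog}(\ux_{\hat{i}})]$, an \emph{exact} form, and applying $\square_{\ul}$ produced a difference of monomial multiples vanishing \emph{as an integrand}; both are identities of forms, not merely of periods. Carrying out the $x_0$-integration of \eqref{e22f} turns the exponential de Rham complex on $\mathbb{G}_m^{1+n}$ into the ordinary meromorphic de Rham complex of the pair $(\PP_\Delta\setminus X_{\uL},\DD_\Delta\setminus\partial X_{\uL})$ (logarithmic along $\DD_\Delta$, with poles along $X_{\uL}$), and under this the above identities become exact-form relations computing $\Psi(\square_{\ul}\cdot 1)=\Psi((Z_i-a_i)\cdot 1)=0$ in $H^n$. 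The same manipulation shows more: $\partial_{\lambda_{\um}}\omega_0=-[\ux^{\um}\,\mathrm{dlog}(\ux)/\phi_{\uL}^{2}]$, and, iterating, the $\mathcal{D}$-submodule of the target generated by $\omega_0$ contains all classes $[\,p(\ux)\,\mathrm{dlog}(\ux)/\phi_{\uL}^{k+1}\,]$ with $p\in R_{(k)}$.

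To conclude it then suffices that (i) these classes span $H^n(\PP_\Delta\setminus X_{\uL},\DD_\Delta\setminus\partial X_{\uL})$, so that $\Psi$ is surjective, and (ii) $\hat{\tau}_{\text{GKZ}}^{\Delta}$ and the target have the same generic rank. For (i) one invokes the toric Griffiths--Dwork / Batyrev pole-order reduction: every class of the pair admits such a representative, and pole order is lowered modulo the Jacobian ideal $J_f$, compatibly with the graded isomorphism of \eqref{e22a} between $R/J_f$ and $\hat{\tau}_{\text{GKZ}}^{\Delta}$ (and the Fourier--Laplace relation between $\tau_{\text{GKZ}}^{\Delta}$ and $\hat{\tau}_{\text{GKZ}}^{\Delta}$). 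For (ii) the left side has rank $(n+1)!\,\mathrm{vol}(\NP)=n!\,\mathrm{vol}(\Delta)$ by \eqref{e22c} (Adolphson \cite{Ad}), while the generic rank of the right side is given by the same Newton-polytope formula for the cohomology of a nondegenerate affine toric hypersurface (Danilov--Khovanskii, Adolphson--Sperber); so $\ker\Psi$ is a holonomic $\mathcal{D}$-module of generic rank $0$, hence supported on a proper closed subset of $\aff^{|\mathfrak{M}|}$. Finally, because $\ua=(1,\uo)$ is non-resonant for $\NP$, the GKZ module $\hat{\tau}_{\text{GKZ}}^{\Delta}$ has no nonzero $\mathcal{D}$-submodule supported on the discriminant (a consequence of Adolphson's Cohen--Macaulay/acyclicity results, which also underlie \eqref{e22a}); hence $\ker\Psi=0$ and $\Psi$ is an isomorphism.

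The step I expect to be the real obstacle is this last one: upgrading a \emph{generic}, fiberwise bijection (matching solution spaces with $H_n$, and cohomology classes with Jacobian-ring elements) to a \emph{global} isomorphism of $\mathcal{D}$-modules over all of $\aff^{|\mathfrak{M}|}$. GKZ systems routinely acquire extra sub- and quotient-modules supported on the discriminant, and even though $\ua=(1,\uo)$ should be the benign value, certifying that neither $\ker\Psi$ nor $\mathrm{coker}\,\Psi$ has a component there --- equivalently, that the relative cohomology sheaf of the pair carries no ``phantom'' contribution beyond the Newton-polytope prediction, even over the discriminant --- is exactly where one needs the sharp acyclicity and regularity input of \cite{Ad}, the toric vanishing theorems behind the Griffiths--Dwork presentation, and the reflexivity of $\Delta$. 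A secondary, more bookkeeping-type obstacle is pinning down a de Rham model for the relative pair $(\PP_\Delta\setminus X_{\uL},\DD_\Delta\setminus\partial X_{\uL})$ --- logarithmic forms along $\DD_\Delta$ with prescribed poles along $X_{\uL}$ --- clean enough that the ``Check'' identities become literal statements about exact forms on it.
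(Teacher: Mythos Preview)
The paper does not supply a proof of this theorem: it is stated with attribution to \cite{HLYZ} (Huang--Lian--Yau--Zhu), and the surrounding text immediately proceeds to use it. So there is no ``paper's own proof'' against which to compare your attempt.

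That said, your outline is a reasonable sketch of how such a result is actually established in the literature, and is broadly in the spirit of \cite{HLYZ}: one defines a $\mathcal{D}$-module map by sending the cyclic generator to the class of $\mathrm{dlog}(\ux)/\phi_{\uL}$, checks well-definedness by verifying that the GKZ operators annihilate this class (your reading of the ``Check'' one level up), and then argues bijectivity by combining a Griffiths--Dwork/Batyrev pole-order presentation of the target with Adolphson's rank formula and a non-resonance argument to exclude discriminantal support. You have also correctly identified the genuine crux: the passage from a generic-fiber rank equality to a global $\mathcal{D}$-module isomorphism is where the serious input (non-resonance of $\ua=(1,\uo)$, regular holonomicity, and the precise de Rham model for the relative pair) must be invoked, and this is indeed what \cite{HLYZ} and its antecedents supply rather than something one can wave through. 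Your proposal is therefore best read as a correct high-level roadmap with the hard step honestly flagged, not as a self-contained proof.
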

The connection to mirror symmetry is amplified by
\begin{thm}[Conjectured by \cite{KKP}, proved by \cite{Ha} ($n=3$) and \cite{Sa}]\label{t23b}
$$
\dim \gr^{n-k}_F H^n(\PP_{\Delta}\setminus X_{\uL},\DD_{\Delta}\setminus\partial X_{\uL})=\dim H^{k,k}(\PP_{\Delta^{\circ}})=\dim H^{k+1,k+1}_c(K_{\PP_{\Delta^{\circ}}}).
$$
\end{thm}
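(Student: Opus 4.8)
The plan is to transport the Hodge filtration through the identification of Theorem \ref{t23a} and then read off the graded dimensions from the Jacobian-ring combinatorics; the one substantial point is the comparison of the Hodge filtration with the algebraic (pole-order) grading.

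By Theorem \ref{t23a} the mixed Hodge structure is carried by $\hat\tau^{\Delta}_{\text{GKZ}}$, and what must be shown is that under this identification $F^{n-k}H^n(\PP_\Delta\setminus X_{\uL},\DD_\Delta\setminus\partial X_{\uL})$ is the span of the classes $p(\ux)\,\ux^{\ua}e^{f}\,\mathrm{dlog}(\ux)$ with $\deg p\le k$ — equivalently, of residues along $X_{\uL}$ of forms with pole order $\le k{+}1$ — where $f=x_0\phi$ and the grading is the one of \eqref{e22a}. I would realize the relative cohomology by a twisted logarithmic de Rham complex on $\PP_{\NP}$, namely $\ux^{\ua}e^{x_0\phi}\,\Omega^{\bullet}_{\PP_{\NP}}(\log\DD_{\NP})$ taken relative to $\DD_{\NP}$ and away from $\{f=0\}$, and verify that its stupid filtration computes the Hodge filtration. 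On the open torus this is Batyrev's computation of the mixed Hodge structure of a nondegenerate affine hypersurface through the Jacobian ring (the toric avatar of Griffiths' residue calculus); the work is to promote it to the toric compactification with its boundary divisor. This is the \emph{main obstacle}: $\PP_\Delta$ is Gorenstein but in general singular, so $X_{\uL}$ is only quasi-smooth, and one argues either after a maximal projective crepant partial (MPCP) resolution — where the fibres become orbifolds and Steenbrink-type mixed Hodge theory, with its pole-order description of $F^{\bullet}$, applies — or directly on $\PP_\Delta$ exploiting $\Delta$-regularity. This is carried out for $n=3$ in \cite{Ha} and in general in \cite{Sa}, and combined with \eqref{e22a} it gives $\dim\gr^{n-k}_F H^n=\dim(R^0/J_f)_{(k)}$.

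It remains to identify both sides combinatorially. Since $\NP$ is the cone over $\{1\}\times\Delta$, the degree-$k$ part of $\mathsf{M}$ is exactly its height-$k$ lattice slice, so $\dim R^0_{(k)}=\#(k\Delta\cap\ZZ^{n})=\mathrm{Ehr}_\Delta(k)$; hence, by \eqref{e22b}, $\dim(R^0/J_f)_{(k)}=\sum_{j\ge 0}(-1)^j\binom{n+1}{j}\mathrm{Ehr}_\Delta(k-j)$ is the $k$-th coefficient of the $h^{*}$-(Ehrhart $\delta$-)polynomial of $\Delta$. On the other side, for the Gorenstein Fano toric variety $\PP_{\Delta^\circ}$ (fan the face fan of $\Delta=(\Delta^\circ)^{\circ}$) the Danilov--Khovanskii Hodge-number formula — applied, if $\PP_{\Delta^\circ}$ is singular, to an MPCP resolution, whose even Hodge numbers coincide with the stringy ones and, when $\PP_{\Delta^\circ}$ is itself simplicial, with the ordinary ones — expresses $\dim H^{k,k}(\PP_{\Delta^\circ})$ as the same $k$-th $h^{*}$-coefficient of $\Delta$; this is the combinatorial content behind \eqref{e22d}--\eqref{e22e}, so alternatively one may simply invoke the graded $\mathcal D$-module isomorphism \eqref{e22e}, whose grading on the quantum side is by half the cohomological degree, once the Hodge filtration has been matched with the GKZ grading. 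Finally, $\dim H^{k,k}(\PP_{\Delta^\circ})=\dim H^{k+1,k+1}_c(K_{\PP_{\Delta^\circ}})$ is immediate from the Thom isomorphism for the line bundle $K_{\PP_{\Delta^\circ}}\to\PP_{\Delta^\circ}$, which shifts cohomological degree by $2$ and Hodge bidegree by $(1,1)$.
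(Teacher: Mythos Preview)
The paper does not supply its own proof of this theorem: it is stated with attribution to \cite{Ha} (for $n=3$) and \cite{Sa}, so there is no in-paper argument to compare your sketch against.  What you have written is a reasonable reconstruction of how such a proof is organized, and you correctly isolate the one substantive step --- that the Hodge filtration on $H^n(\PP_\Delta\setminus X_{\uL},\DD_\Delta\setminus\partial X_{\uL})$ agrees with the pole-order (degree) filtration on $\hat\tau^\Delta_{\mathrm{GKZ}}$ under Theorem~\ref{t23a} --- and correctly defer precisely that step to those references.  With that comparison in hand, the passage through the $h^*$-polynomial of $\Delta$ (Jacobian-ring combinatorics via \eqref{e22a}--\eqref{e22b} on one side, Danilov--Khovanskii on the other) and the Thom isomorphism for the second equality are both fine.

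One point to tighten: your identification $\dim H^{k,k}(\PP_{\Delta^\circ})=h^*_k(\Delta)$ is clean only when $\PP_{\Delta^\circ}$ is at least simplicial, so that ordinary and stringy Hodge numbers coincide.  You acknowledge this, but the argument becomes slippery in the non-simplicial case: passing to an MPCP resolution changes the variety whose $H^{k,k}$ you are computing, while the statement as written concerns the honest $H^{k,k}(\PP_{\Delta^\circ})$.  Either add the simplicial hypothesis explicitly, or note that in the singular case the right-hand side should be read as stringy (orbifold) Hodge numbers --- which is in any event how the references treat it.
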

\noindent This refines \eqref{e23a} into a graded isomorphism strongly reminiscent of Griffiths's residue theory \cite{Gr69}, with $\mathrm{gr}_k$ (resp. multiplication by $x_0 \ux^{\um}$ as a map from $\mathrm{gr}_k \to \mathrm{gr}_{k+1}$) on the left matching $\gr_F^{n-k}$ (resp. $\nabla_{\partial_{\lambda_{\um}}}\colon \gr_F^{n-k}\to \gr_F^{n-k-1}$) on the right.  

However, the RHS of \eqref{e23a} is a (variation of) \emph{mixed} Hodge structure, with a nontrivial \emph{weight} filtration.  While intersection theory on the A-model $K_{\PP_{\Delta^{\circ}}}$ allows us to compute a basis of solutions to GKZ via mirror symmetry (Thm. \ref{t23c} below), it is unclear how to see the weight filtration directly in these terms.  To elaborate, we pose two questions:

\subsubsection*{(1) How might one isolate the highest weight part $\gr_{n+1}^W$ of \eqref{e23a} \textup{(}i.e., $H^{n-1}(X_{\uL})$\textup{)} within the setting of GKZ solutions?}~\\~\\
\noindent Under mirror symmetry we have the correspondences:
\begin{itemize}[leftmargin=0.5cm]
\item	$\um\in\mathfrak{M}$ $\longleftrightarrow$ divisors $[D_{\um}]\in H^2(\PP_{\Delta^{\circ}})$;
\item  relations $\ell\in \mathbb{L}$ $\longleftrightarrow$ curves $[C_{\ul}]\in H_2(\PP_{\Delta^{\circ}})$; and
\item  \emph{Mori cone} $\mathbb{L}_{\geq 0}\subset \mathbb{L}$ $\longleftrightarrow$ effective curve classes.
\end{itemize}
We assume $\mathbb{L}_{\geq 0}$ is simplicial with basis $\{\ul^{(i)}\}$, and put $t_i :=\uL^{\ul^{(i)}}$ (e.g. $t=\tfrac{\lambda_1\lambda_2\lambda_3}{\lambda_0^3}$ above), $\tau_i:=\tfrac{\log(t_i)}{2\pi\ay}$.  The isomorphism class of $X_{\uL}(=:X_{\ut})$ depends only on $\ut$.
\begin{thm}[\cite{HLY}]\label{t23c}
The \textup{(}$\CC$-linear combinations of\textup{)} periods $\mathscr{P}$ of $\hat{\tau}_{\text{GKZ}}^{\Delta}$ are the \textup{(}$\CC$-linear combinations of\textup{)} coefficients of cohomology classes in
$$
\mathscr{B}_{\Delta}:=\sum_{\ell\in\mathbb{L}_{\geq 0}}	\tfrac{\prod_{\um\colon\ell_{\um}<0}D_{\um}(D_{\um}-1)\cdots (D_{\um}+\ell_{\um}+1)}{\prod_{\um\colon\ell_{\um}>0}(D_{\um}+1)\cdots (D_{\um}+\ell_{\um})}(D_{\uo}-1)\cdots(D_{\uo}+\ell_{\uo})\uL^{\ul+\underline{D}}\in H^*(\PP_{\Delta^{\circ}})\otimes \CC[[\underline{t}]][\underline{\tau}].
$$
\end{thm}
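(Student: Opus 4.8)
The plan is to exhibit $\mathscr{B}_{\Delta}$ as a cohomology-valued \emph{fundamental solution} of $\hat{\tau}_{\text{GKZ}}^{\Delta}$ and then to count solutions, following the approach of \cite{HLY}. In detail: (a) using nilpotence of the $D_{\um}$ in $H^{*}(\PP_{\Delta^{\circ}})$, each $\uL^{\ul+\underline{D}}$ — and hence, after fixing a $\CC$-basis $\{\eta_{\alpha}\}$ of $H^{*}(\PP_{\Delta^{\circ}})$, each coefficient of $\mathscr{B}_{\Delta}$ in that basis — is a genuine element of $\CC[[\ut]][\underline{\tau}]$; (b) verify that $\mathscr{B}_{\Delta}$, and therefore each such coefficient, is annihilated by every $Z_{i}$ and every $\square_{\ul}$ of \eqref{e21a}; (c) show the resulting $\dim_{\CC}H^{*}(\PP_{\Delta^{\circ}})$ scalar solutions are $\CC$-linearly independent; (d) note that, by \eqref{e22c} together with Theorems \ref{t23a}--\ref{t23b}, this number is exactly $\mathrm{rk}\,\hat{\tau}_{\text{GKZ}}^{\Delta}$, so the span of the coefficients of $\mathscr{B}_{\Delta}$ is the whole (local) solutions module \eqref{e21d}. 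Since the geometric periods $\mathscr{P}$ of Proposition \ref{p21a} — made precise by Theorem \ref{t23a}, which identifies the solutions with relative cycles $\Gamma$ — likewise span that module after analytic continuation from a small polydisc about $\ut=\uo$, the two $\CC$-spans coincide, which is the assertion.

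For step (b), write $\mathscr{B}_{\Delta}=\sum_{\ul\in\mathbb{L}_{\geq 0}}c_{\ul}(\underline{D})\,\uL^{\ul+\underline{D}}$. Each monomial $\uL^{\ul+\underline{D}}$ is a simultaneous eigenvector of the Euler fields $\delta_{\lambda_{\um}}$ with eigenvalue $\ell_{\um}+D_{\um}$, so $Z_{i}$ multiplies it by $\sum_{\um\in\mathfrak{M}}m_{i}(\ell_{\um}+D_{\um})+a_{i}$; because $\ul$ is a relation ($\sum_{\um}\ell_{\um}\um=\uo$) the $\ul$-part vanishes, and the remaining operator $\sum_{\um}m_{i}D_{\um}+a_{i}$ is precisely one of the linear relations cutting out $H^{*}(\PP_{\Delta^{\circ}})$ inside $\CC[\{D_{\um}\}]$ — here the normalization $\ua=(1,\uo)$ is what pins down the class attached to the $\lambda_{\uo}$-term — so it kills $\mathscr{B}_{\Delta}$. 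For $\square_{\ul'}$ one uses that $\partial_{\lambda_{\um}}$ sends $\uL^{\ul+\underline{D}}$ to $(\ell_{\um}+D_{\um})\lambda_{\um}^{-1}\uL^{\ul+\underline{D}}$; applying $\prod_{\ell'_{\um}>0}\partial_{\lambda_{\um}}^{\ell'_{\um}}$ to the $\ul$-term and $\prod_{\ell'_{\um}<0}\partial_{\lambda_{\um}}^{-\ell'_{\um}}$ to the $(\ul+\ul')$-term produces the same $\uL$-monomial, and the two scalars so obtained agree precisely because $c_{\ul}$ is the ratio of Pochhammer-type products in the $D_{\um}$ displayed in the theorem, with the extra block $(D_{\uo}-1)\cdots(D_{\uo}+\ell_{\uo})$ contributed by the first component of $\ua$. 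That last agreement is a finite identity among products of linear forms, proved by the functional equation of $\Gamma$ read for nilpotent arguments, i.e.\ by telescoping; one also uses that $\mathbb{L}_{\geq 0}$ is a (simplicial) cone, so that $\ul$ and $\ul+\ul'$ both contribute terms.

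For step (c): expand the $\ul=\uo$ term $\uL^{\underline{D}}$ in the nef coordinates, $\uL^{\underline{D}}\equiv\prod_{i}\exp(\tau_{i}H_{i})$ with $\{H_{i}\}$ dual to the chosen basis $\{\ul^{(i)}\}$ of $\mathbb{L}_{\geq 0}$; then the coefficient of a degree-$2k$ class $\eta_{\alpha}$ begins, as $\ut\to\uo$, with a polynomial of degree $k$ in the $\tau_{i}$, and since $H^{*}(\PP_{\Delta^{\circ}})$ is generated in degree $2$ these leading polynomials are non-degenerate under the Poincaré pairing. A filtration-by-cohomological-degree argument then yields $\CC$-linear independence of the coefficients, and combined with (d) the theorem follows.

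The main obstacle is step (b) in the presence of \emph{resonance}: $\ua=(1,\uo)$ is a resonant parameter, so the bare $\Gamma$-series is degenerate and one genuinely needs the logarithmic enhancement $\CC[[\ut]][\underline{\tau}]$ — one must check that specializing the $D_{\um}$ to nilpotent classes produces the full solution space rather than a proper subspace, and that the $\square_{\ul'}$- and $Z_{i}$-identities survive this specialization compatibly (this is where the numerator and denominator Pochhammer blocks and the $D_{\uo}$-factor must be reconciled with intersection theory on a crepant resolution of $\PP_{\Delta^{\circ}}$). A tidy way to organize this is to first deform $\ua$ to a nearby non-resonant value $(1,\uo)+\varepsilon$ — where the $B$-series argument is classical and each component is a single convergent $\Gamma$-series — and then let $\varepsilon\to\uo$, tracking the logarithms through the factors $\lambda_{\um}^{\varepsilon_{\um}}$; the one point requiring real care in the limit is the independence claim (c), handled as above.
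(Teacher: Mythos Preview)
The paper does not give its own proof of this theorem: it is stated with the attribution \cite{HLY} and immediately followed by the Hyperplane Conjecture, with no argument supplied. So there is no ``paper's proof'' to compare against; the result is imported wholesale from Hosono--Lian--Yau.

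Your proposal is a faithful outline of the argument in \cite{HLY} itself: build a cohomology-valued $\Gamma$-series, check by direct computation that the Euler operators $Z_i$ and box operators $\square_{\ul}$ annihilate it (using that the $D_{\um}$ satisfy the linear and Stanley--Reisner relations of $H^*(\PP_{\Delta^{\circ}})$), extract $\dim H^*(\PP_{\Delta^{\circ}})$ independent scalar solutions from the coefficients, and match this against the holonomic rank. Your remarks on the resonant parameter $\ua=(1,\uo)$ and the need for the logarithmic sector $\CC[[\ut]][\underline{\tau}]$ are also in the spirit of \cite{HLY}. One small caution: your dimension count in step~(d) invokes Theorems~\ref{t23a}--\ref{t23b}, but those results postdate \cite{HLY}; the original argument instead compares directly with the normalized volume \eqref{e22c} via the toric description of $H^*(\PP_{\Delta^{\circ}})$, which is logically cleaner here since it avoids a forward reference.
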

\begin{conj}[Hyperplane Conjecture \cite{HLY,LZ}]
The periods of \textup{(}$\nabla$-flat sections of\textup{)} $H^{n-1}(X_{\ut})$ are the \textup{(}$\CC$-linear combinations of\textup{)} coefficients of cohomology classes in $\mathscr{B}_{\Delta}\cup [X^{\circ}]$, where $X^{\circ}\subset \PP_{\Delta^{\circ}}$ is an anticanonical hypersurface.
\end{conj}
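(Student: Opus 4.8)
\medskip
\noindent\textbf{Proof proposal.}
The left-hand side is the local solution space, near $\ut=0$, of the Picard--Fuchs $\mathcal{D}$-module of $H^{n-1}(X_{\ut})$ --- equivalently, by the toric form of Griffiths's residue calculus (Batyrev--Cox, Mavlyutov), of the sub-quotient $\gr^W_{n+1}H^n(\PP_\Delta\setminus X_{\uL},\DD_\Delta\setminus\partial X_{\uL})$, whose Hodge-graded dimensions are prescribed by Theorem~\ref{t23b}; in particular it is finite-dimensional, of some rank $r$. The right-hand side is the $\CC$-span of the $\CC[[\ut]][\underline{\tau}]$-coefficients --- taken in a fixed basis of $H^*(\PP_{\Delta^\circ})$ --- of $\mathscr{B}_\Delta\cup[X^\circ]$. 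The plan is to show (a) that every such coefficient is a period of a $\nabla$-flat section of $H^{n-1}(X_{\ut})$, so that the right-hand span is contained in the left, and (b) that the right-hand span already has dimension $r$; together these force equality.

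For (a), I would first make the residue picture explicit, continuing the computation \eqref{e22f}: a $\nabla$-flat homology frame $\{\gamma_i(\ut)\}\subset H_{n-1}(X_{\ut})$ produces period functions $\mathscr{P}_i=\int_{\text{Tube}(\gamma_i)}\tfrac{\mathrm{dlog}(\ux)}{\phi_{\ut}}$ which, together with their Gauss--Manin derivatives, span the solution space of the Picard--Fuchs system of $H^{n-1}(X_{\ut})$; this system is produced by Dwork--Griffiths reduction within the graded Jacobian-ring presentation of \S\ref{SIB} (equivalently, it is the ``better-behaved''/reduced GKZ system attached to the anticanonical row $\{1\}\times\Delta$ used to pass from $\Delta$ to $\NP$, i.e.\ to the Cayley trick for anticanonical hypersurfaces). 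On the A-side, the factor $(D_{\uo}-1)\cdots(D_{\uo}+\ell_{\uo})$ inside $\mathscr{B}_\Delta$ is precisely the Givental/Lian--Liu--Yau ``hyperplane'' modification passing from the $I$-function of $\PP_{\Delta^\circ}$ to that of its anticanonical section, and $\cup[X^\circ]$ --- the Gysin--restriction $\iota_{!}\iota^{*}$ for $\iota\colon X^\circ\hookrightarrow\PP_{\Delta^\circ}$ --- supplies the corresponding reduction of the $\mathcal{D}$-module. The crux is then a hypergeometric identity: one translates the Dwork--Griffiths reduction into recursions among the $\uL^{\ul+\underline{D}}$-coefficients of $\mathscr{B}_\Delta$ and verifies that, after $\cup[X^\circ]$, these are annihilated by the hypersurface Picard--Fuchs operators; this is a quantum-Lefschetz-type computation.

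For (b), I would work at $\ut=0$. The lowest-order term of $\mathscr{B}_\Delta\cup[X^\circ]$ is $[X^\circ]$, and repeatedly applying $\nabla_{\partial_{\lambda_{\um}}}$ --- which to leading order at $\ut=0$ acts by cup-product with $D_{\um}$ on $H^*(\PP_{\Delta^\circ})$ (Theorem~\ref{t23b}) --- and extracting leading Taylor/$\log$-coefficients recovers the classes $[X^\circ]\cup\prod_{\um}D_{\um}^{k_{\um}}$, which span $[X^\circ]\cup H^*(\PP_{\Delta^\circ})$ in the toric intersection ring. Hard Lefschetz for the ample anticanonical class identifies $\dim\bigl([X^\circ]\cup H^*(\PP_{\Delta^\circ})\bigr)$ with $\sum_k\dim H^{k+1,k+1}_c(K_{\PP_{\Delta^\circ}})$, which by Theorem~\ref{t23b} equals $\dim\gr^W_{n+1}H^n(\PP_\Delta\setminus X_{\uL},\DD_\Delta\setminus\partial X_{\uL})=r$; so the rank matches.

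The main obstacle is exactly the issue flagged before Theorem~\ref{t23c}: making $\cup[X^\circ]$ correspond to the residue/Gysin map \emph{compatibly with the weight filtration}, so that the periods it produces lie in $\gr^W_{n+1}$ and not in some intermediate weight sub-quotient. In practice this means step (a) must be run with the \emph{correct} Picard--Fuchs ideal rather than merely the ambient GKZ ideal, and that term-by-term comparison becomes delicate when $\PP_{\Delta^\circ}$ is only simplicial and $X^\circ$ acquires quotient singularities; one must also avoid circularity with Batyrev--Borisov mirror symmetry, which at this level of generality is itself only a statement about Hodge numbers. A prudent route is to establish the hypergeometric identity first when $\PP_{\Delta^\circ}$ is a smooth toric Fano (or a weighted projective space), where the quantum hyperplane section theorem of Givental and Coates--Givental is unconditional and transports to the B-side via Theorems~\ref{t23a}--\ref{t23c}, and then to reach the general reflexive case by toric degeneration, using semicontinuity of the rank to propagate fullness.
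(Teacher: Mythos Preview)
The paper does not prove this statement: it is stated as a \emph{conjecture}, attributed to \cite{HLY,LZ}, and only illustrated by Example~\ref{ex23a}. There is therefore no argument in the paper against which to compare your proposal; in the generality given, the statement is open.

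Your outline is a plausible strategy, and you are right to flag the weight-filtration compatibility of $\cup[X^\circ]$ as the crux. However, step~(b) contains a concrete error. Theorem~\ref{t23b} computes the \emph{Hodge}-graded ranks of the \emph{entire} MHS $H^n(\PP_\Delta\setminus X_{\uL},\DD_\Delta\setminus\partial X_{\uL})$, not of its top weight-graded piece $\gr^W_{n+1}$; summing over $k$ therefore yields the total GKZ rank, not~$r$. Already in the paper's own example ($\PP_{\Delta^\circ}=\PP^2$, $n=2$) one has $\sum_k h^{k+1,k+1}_c(K_{\PP^2})=3$, while $\dim\bigl([X^\circ]\cup H^*(\PP^2)\bigr)=2=r$. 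So both equalities in your chain
\[
\dim\bigl([X^\circ]\cup H^*(\PP_{\Delta^\circ})\bigr)\;=\;\sum_k h^{k+1,k+1}_c(K_{\PP_{\Delta^\circ}})\;=\;r
\]
fail individually, and the appeal to Theorem~\ref{t23b} for the second is a misreading. A correct count of $r$ would require Batyrev's combinatorial formula for the variable Hodge numbers of the toric hypersurface (or, equivalently, for the image of the residue map), together with an independent A-side computation of $\dim\bigl([X^\circ]\cup H^*(\PP_{\Delta^\circ})\bigr)$; neither is supplied here, and establishing their agreement in general is part of what makes the statement conjectural rather than a theorem.
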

\begin{example}\label{ex23a}
With $\Delta$ as in Examples \ref{ex22a}-\ref{ex22b}, we have $\PP_{\Delta^{\circ}}=\PP^2$, $[X^\circ]=3[H]$ (for $H$ a hyperplane in $\PP^2$), and $$\mathscr{B}_{\Delta}=[1](\text{holo. period})+[H](\log\text{ period})+[H]^2(\log^2\text{ period}).$$ In this case $$\mathscr{B}_{\Delta}\cup[X^{\circ}]=[H](\text{holo. period})+[H]^2(\text{log period}),$$ and so the hyperplane conjecture correctly asserts that the holomorphic and log periods are the actual periods of $H^1(E_t)$.
\end{example}

\subsubsection*{(2) Can we compute the remaining GKZ periods, especially those which yield extension classes of $\gr_{n+1}^W$ by other weight-graded pieces?}~\\~\\
\noindent Here ``compute'' means \emph{using the A-model}.  We know at present of no (even conjectural) \emph{intrinsic} A-model description of the full weight filtration.  An \emph{extrinsic} one, which we shall now sketch, was obtained in \cite{BKV2} by presenting $K_{\PP_{\Delta^{\circ}}}$ as the large-fiber-volume limit of compact elliptically-fibered Calabi-Yau $(n+1)$-folds.  (Though [op. cit.] treats the case $n=2$, this works in general.) To obtain these families of higher-dimensional CYs, let $\diamond\subset \RR^2$ be the convex hull of $\{(-1,1),(-1,-1),(2,-1)\}$, and $\hat{\Delta}\subset \RR^{n+2}$ be the convex hull of $\Delta\times {(-1,-1)}$ and $\uo\times \diamond$.  There are torically-induced morphisms $\PP_{\hat{\Delta}}\to \PP_{\Delta}$ and $\PP_{\hat{\Delta}^{\circ}}\to \PP_{\Delta^{\circ}}$ which restrict to elliptic fibrations on anticanonical (CY-)hypersurfaces $\hat{X},\hat{X}^{\circ}$.

In particular, write $\hat{X}_{\ut,s}$ for the closure of the zero-locus of $\Phi(\ux,u,v):=\mathsf{a}+\mathsf{b}u^2 v^{-1}+\mathsf{c}u^{-1}v^{-1}+\phi_{\uL}(\ux)u^{-1}v^{-1}$, where $s:=\tfrac{\lambda_0 \mathsf{b}^2 \mathsf{c}^3}{\mathsf{a}^6}$.  Instead of the large complex-structure limit ($\ut\to \uo$ and $s\to 0$), we take only $s\to 0$.  This has the effect of degenerating the generic fiber of $\hat{X}_{\ut,s}\to \PP_{\Delta}$ and decompactifying that of $\hat{X}^{\circ}\to \PP_{\Delta^{\circ}}$, resulting in the diagram
\begin{equation}\label{e23b}
\xymatrix@R-1pc@C-1pc{
& K_{\PP_{\Delta^{\circ}}} \ar @{->>}[ld]_{\aff^1} & & & & \hat{X}_{\ut,0} \ar @{->>} [rd]^{\mathrm{I}_5} \\
\PP_{\Delta^{\circ}} & & \ar @{<~>} [rr]^{\text{Batyrev}}_{\text{mirror}} & & & & \PP_{\Delta}\\
& \mspace{30mu}\hat{X}^{\circ}\subset \PP_{\hat{\Delta}^{\circ}} \ar @{->>} [lu]^{\text{ell.}} \ar @{-->} [uu]^{s\to 0} & & & & \PP_{\hat{\Delta}}\supset \hat{X}_{\ut,s}\mspace{30mu} \ar @{->>} [ru]_{\text{ell.}} \ar @{-->} [uu]_{s\to 0}
}\end{equation}
with solid arrows labeled by generic fiber type.  The singular CY $\hat{X}_{\ut,0}$ has the \emph{Hori-Vafa model} $$Y_{\ut}:=\hat{X}_{\ut,0}\cap (\mathbb{G}_m^n \times \aff^2) =\{\phi_{\uL}(\ux)+uv=0\},$$ a smooth noncompact CY $(n+1)$-fold, as a Zariski open subset, and one has the

\begin{thm}[\cite{DK1,BKV2}]\label{t23d}
There are isomorphisms of $\QQ$-VMHS
\begin{equation}\label{e23c}
H^{n+1}(\hat{X}_{\ut,0})\cong H_{n+1}(Y_{\ut})(-n-1)\cong H^n(\PP_{\Delta}\setminus X_{\ut},\DD_{\Delta}\setminus \partial X_{\ut}).
\end{equation}
\end{thm}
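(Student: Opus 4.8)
The plan is to prove the two isomorphisms in \eqref{e23c} separately, pivoting on the identification $H_{n+1}(Y_{\ut})(-n-1)\cong H^{n+1}_{c}(Y_{\ut})$ (Poincar\'e--Lefschetz duality for the smooth $(n+1)$-fold $Y_{\ut}$): the isomorphism with $H^{n}(\PP_{\Delta}\setminus X_{\ut},\DD_{\Delta}\setminus\partial X_{\ut})$ is a ``Hori--Vafa doubling'' computation in the style of \cite{DK1}, and the one with $H^{n+1}(\hat{X}_{\ut,0})$ is a toric degeneration argument in the style of \cite{BKV2}. Everything is done over $\ut\in U$, with all maps below algebraic in $\ut$ and horizontal for Gauss--Manin, so the fibrewise statements assemble to isomorphisms of $\QQ$-VMHS.

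\emph{The doubling.} Set $V:=\mathbb{G}_m^{n}\setminus X_{\ut}$ and let $p\colon Y_{\ut}\to\mathbb{G}_m^{n}$ be the projection forgetting $(u,v)$. Over $V$ the fibre of $p$ is the smooth affine conic $\{uv=-\phi_{\uL}(\ux)\}\cong\mathbb{G}_m$ --- in fact $p^{-1}(V)\cong V\times\mathbb{G}_m$ via $(\ux,u)\mapsto(\ux,u,-\phi_{\uL}(\ux)/u)$ --- while over $X_{\ut}\cap\mathbb{G}_m^{n}$ it is the node $\{uv=0\}\subset\aff^{2}$, contractible and hence cohomologically trivial. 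Feeding the closed stratification $Y_{\ut}=p^{-1}(V)\sqcup p^{-1}(X_{\ut}\cap\mathbb{G}_m^{n})$ into the localization long exact sequence (in compactly supported cohomology, or dually in Borel--Moore homology), and applying K\"unneth with $H^{1}_{c}(\mathbb{G}_m)=\QQ(0)$ and $H^{2}_{c}(\mathbb{G}_m)=\QQ(-1)$, one sees $H^{n+1}_{c}(Y_{\ut})$ assembled out of $H^{n}_{c}(V)$ (shifted by the $\mathbb{G}_m$-factor) and $H^{n-1}(X_{\ut}\cap\mathbb{G}_m^{n})(-1)$ (from the degenerate locus), with all remaining terms Hodge--Tate classes supported on toric strata that cancel in pairs. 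On the other side, the Gysin sequence for $X_{\ut}\cap\mathbb{G}_m^{n}\hookrightarrow\mathbb{G}_m^{n}$ exhibits $H^{n}(\PP_{\Delta}\setminus X_{\ut},\DD_{\Delta}\setminus\partial X_{\ut})$ as built from the very same ingredients --- its highest weight graded piece being $\gr^{W}_{n+1}\cong H^{n-1}(X_{\ut})(-1)$, as recorded in \S\ref{SIC}, the rest Tate. Matching the two gives the isomorphism; because every arrow in both sequences is a morphism of MHS it automatically respects $W$, and comparing the residue of $\mathrm{dlog}(\ux)\wedge du\wedge dv/(uv+\phi_{\uL})$ along $Y_{\ut}$ with $\mathrm{dlog}(\ux)/\phi_{\uL}$ on $\PP_{\Delta}\setminus X_{\ut}$, as in \eqref{e22f}, forces compatibility with $F^{\bullet}$ --- hence consistency with Theorem \ref{t23b}.

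\emph{The degeneration.} By construction $Y_{\ut}=\hat{X}_{\ut,0}\cap(\mathbb{G}_m^{n}\times\aff^{2})$ is a dense Zariski-open subset of the compact singular Calabi--Yau $(n+1)$-fold $\hat{X}_{\ut,0}$, whose complement $D_{\ut}$ is a divisor supported on the toric boundary of $\PP_{\hat{\Delta}}$ together with the ``extra'' $\PP^{1}$-components of the $\mathrm{I}_5$-fibration $\hat{X}_{\ut,0}\to\PP_{\Delta}$ of \eqref{e23b} (the singular locus of $\hat{X}_{\ut,0}$ lies inside $D_{\ut}$, since $Y_{\ut}$ is smooth). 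As $\hat{X}_{\ut,0}$ is compact, $H^{n+1}_{c}(Y_{\ut})=H^{n+1}(\hat{X}_{\ut,0},D_{\ut})$, and the long exact sequence of the pair
\[
H^{n}(D_{\ut})\to H^{n+1}_{c}(Y_{\ut})\to H^{n+1}(\hat{X}_{\ut,0})\to H^{n+1}(D_{\ut})
\]
reduces the isomorphism to the vanishing of the two connecting maps. The transcendental part poses no problem: the non-Tate summand of $H^{n+1}(\hat{X}_{\ut,0})$ is the Tate-twisted $H^{n-1}(X_{\ut})$ retained by the $\mathrm{I}_5$-degeneration --- the image of the transcendental part of $H^{n+1}_{c}(Y_{\ut})$ --- and it cannot meet the toric/$\PP^{1}$-chain cohomology of $D_{\ut}$; what remains is a vanishing statement among Tate classes, which one reads off the toric strata of $D_{\ut}$ in $\PP_{\hat{\Delta}}$ and the cycles that vanish as $s\to 0$ in \eqref{e23b}. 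This is precisely the computation performed in \cite{BKV2} for $n=2$, and it extends in general; alternatively, one may route it through the limiting MHS of the $s$-family. Composing with the doubling isomorphism yields \eqref{e23c}.

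I expect the main obstacle to be this Tate bookkeeping in the degeneration step: while the vanishing of the \emph{transcendental} contributions to the connecting maps is automatic for weight reasons, equating the \emph{Tate} parts of $H^{n+1}(\hat{X}_{\ut,0})$ and $H^{n+1}_{c}(Y_{\ut})$ --- i.e.\ showing that the Tate classes of $H^{n+1}(\hat{X}_{\ut,0})$ are exactly those supported away from $D_{\ut}$ --- requires a precise enumeration of the toric strata of $D_{\ut}$ inside $\PP_{\hat{\Delta}}$ and of the cycles vanishing under $s\to 0$, the combinatorial input on $\hat{\Delta}$ and $\diamond$ that makes the picture of \eqref{e23b} work. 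A secondary point, needing care rather than a new idea, is verifying horizontality of the comparison maps for Gauss--Manin --- so that they are maps of \emph{variations} and not merely of fibrewise MHS --- which holds because all the stratifications and Gysin/residue maps above are defined algebraically in $\ut$.
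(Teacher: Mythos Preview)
The paper does not give its own proof of this theorem; it is stated with attribution to \cite{DK1,BKV2} and then used as input for the mirror-symmetry discussion that follows.  Your two-step plan --- Hori--Vafa doubling for the right-hand isomorphism and a boundary/degeneration analysis for the left-hand one --- is exactly the strategy of those references, and your identification of the Tate bookkeeping on $D_{\ut}$ as the crux of the second step is accurate.

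One inconsistency to fix in the doubling step: you call the nodal fibre $\{uv=0\}$ ``contractible and hence cohomologically trivial,'' and then immediately pass to compactly supported cohomology.  Contractibility kills reduced \emph{ordinary} cohomology, but $H^{*}_{c}$ of the node is not trivial: one has $H^{1}_{c}(\{uv=0\})\cong\QQ(0)$ and $H^{2}_{c}(\{uv=0\})\cong\QQ(-1)^{\oplus 2}$.  You then (correctly) extract a contribution $H^{n-1}(X_{\ut}\cap\mathbb{G}_m^{n})(-1)$ from the degenerate locus, which can only come from this nontrivial $H^{*}_{c}$ of the fibre --- so your stated reason and your conclusion are at odds.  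A cleaner way to organize the doubling, closer to \cite{DK1}, is to project to the $u$-coordinate rather than to $\mathbb{G}_m^{n}$: then $Y_{\ut}\cap\{u\neq 0\}\cong\mathbb{G}_m^{n}\times\mathbb{G}_m$ (via $v=-\phi_{\uL}(\ux)/u$) and $Y_{\ut}\cap\{u=0\}\cong X_{\ut}^{*}\times\aff^{1}_{v}$, and the localization sequence in $H^{*}_{c}$ for this pair gives the answer without the ``cancellation in pairs'' step you allude to.  Either route works once the $H^{*}_{c}$ of the strata is recorded correctly.
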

Now by Theorem \ref{t23a}, the RHS of \eqref{e23c} identifies with $\hat{\tau}_{\text{GKZ}}^{\Delta}$.  On the other hand, Iritani's results \cite{Ir} on $\hat{\Gamma}$-integral structure allow us to explictly compute the LHS of the isomorphism
\begin{equation}\label{e23d}
QH^{\text{even}}(\hat{X}^{\circ})\cong H^{n+1}(\hat{X}_{\ut,s})	
\end{equation}
of A- and B-model $\ZZ$-VHS.  Taking LMHS on both sides as $s\to 0$
\begin{equation}\label{e23e}
\psi_s QH^{\text{even}}(\hat{X}^{\circ})\cong \psi_s H^{n+1}(\hat{X}_{\ut,s}),
\end{equation}
the (unipotent) monodromy invariants
\begin{equation}\label{e23f}
QH^{*(+2)}_c(K_{\PP_{\Delta^{\circ}}})\cong H^{n+1}(\hat{X}_{\ut,0})	
\end{equation}
must agree as $\QQ$-VMHS.  
\begin{example}\label{ex23b}
For $\PP_{\Delta^{\circ}}=\PP^2$, the Hodge-Deligne diagrams\footnote{The number of dots in the $(p,q)$ spot represents $h^{p,q}$ of the given MHS.} for \eqref{e23d}-\eqref{e23f} are
\[\begin{tikzpicture}[scale=0.6]
\draw [thick,gray] (0,3.3) -- (0,0) -- (3.3,0);
\node [align=center] at (4.5,1.5) {{\tiny LMHS}\\ $\rightsquigarrow$\\ {\tiny $s\to 0$}};
\draw [thick,gray] (6,3.3) -- (6,0) -- (9.3,0);
\node [align=center] at (10.5,1.5) {\\ $\supset$\\ {\tiny $\ker(N)$}};
\draw [thick,gray] (12,3.3) -- (12,0) -- (15.3,0);
\filldraw[black] (0,3) circle (3pt);
\filldraw[black] (1.1,2) circle (3pt);
\filldraw[black] (2.1,1) circle (3pt);
\filldraw[black] (0.9,2) circle (3pt);
\filldraw[black] (1.9,1) circle (3pt);
\filldraw[black] (3,0) circle (3pt);
\filldraw[black] (6,0) circle (3pt);
\filldraw[black] (7,2) circle (3pt);
\filldraw[black] (8,2) circle (3pt);
\filldraw[black] (7,1) circle (3pt);
\filldraw[black] (8,1) circle (3pt);
\filldraw[black] (9,3) circle (3pt);
\node [blue] at (9,2.3) {$N$};
\draw [->,blue,thick] (7.8,1.8) -- (7.2,1.2); 
\draw [->,blue,thick] (6.8,0.8) -- (6.2,0.2);
\draw [->,blue,thick] (8.8,2.8) -- (8.2,2.2);
\filldraw[black] (12,0) circle (3pt);
\filldraw[black] (14,1) circle (3pt);
\filldraw[black] (13,2) circle (3pt);
\end{tikzpicture}\]
Here $n=2$ and $h^{2,1}(\hat{X}_{t,s}^{\circ})=2$, while each of the $\gr_F^k$ ($k=0,1,2$) in Theorem \ref{t23b} (visible in the right-most diagram) has rank 1.
\end{example}
The upshot is that we recover the isomorphism $QH^{*(+2)}_c(K_{\PP_{\Delta^{\circ}}})\cong \hat{\tau}_{\text{GKZ}}^{\Delta}$ claimed in \eqref{e22e}, while promoting it to an isomorphism of $\QQ$-VMHS.
Moreover, we obtain the promised A-model description of the weight filtration on $\hat{\tau}_{\text{GKZ}}^{\Delta}$ as the monodromy weight filtration $M_{\bullet}=W(N)[-n-1]_{\bullet}$ on LHS\eqref{e23f} $\subset$ LHS\eqref{e23e}.  We may therefore use \cite{Ir} to compute $W_{\bullet}\hat{\tau}_{\text{GKZ}}^{\Delta}$, and the associated ``mixed-$\QQ$-periods'', in terms of the intersection theory of $\PP_{\hat{\Delta}^{\circ}}$ and Gromov-Witten theory of $\hat{X}^{\circ}$, restricted to classes of curves whose volume remains finite in the $s\to 0$ limit.  This boils down to intersection theory and \emph{local} GW theory of $\PP_{\Delta^{\circ}}$.  (The reader who wants to see this worked out in detail in some $n=2$ cases may consult \cite{BKV2}.)

So far, we have said nothing about the extensions of MHS in \eqref{e23a} which these mixed periods are supposed to help us compute.  (For instance, the right-hand term of Example \ref{ex23b} can be viewed as the dual of the extension associated to the regulator of a family of $K_2^{\text{alg}}$ classes on the family $E_t$ of elliptic curves.) The analysis of these VMHS undertaken in $\S\S$\ref{SII}-\ref{SIII} works in a ``more general'' setting which allows us to \emph{drop} the genericity assumption on $\phi$.

\section{Special Laurent polynomials}\label{SII}
\subsection{Landau-Ginzburg models}\label{SIIA}

Instead of starting with a reflexive polytope and letting $\phi$ vary over the corresponding parameter space minus discriminant locus, we begin by fixing a Laurent polynomial $\phi\in \CC[x_1^{\pm 1},\ldots,x_n^{\pm 1}]$.  We assume that its Newton polytope $\Delta$ (the convex hull of those $\{\um\}$ for which $\ux^{\um}$ has nonzero coefficient) is reflexive; and fixing a maximal projective triangulation $\text{tr}(\Delta^{\circ})$, we also assume that the associated toric $n$-fold $\PP_{\Delta}:=\PP_{\Sigma(\text{tr}(\Delta^{\circ}))}$ is smooth.\footnote{We will write $\PP'_{\Delta}:=\PP_{\Sigma(\Delta^{\circ})}$ for the singular toric $n$-fold (of which $\PP_{\Delta}$ is a blow-up).}  Write $\DD_{\Delta}:=\PP_{\Delta}\setminus \mathbb{G}_m^n$ as before, $X_t \subset\PP_{\Delta}$ for the Zariski closure of $\{1=t\phi(\ux)\}$, and $Z:=\DD_{\Delta}\cap X_0$ for the base locus of the resulting pencil.

As in \cite[Thm. 4]{DvdK}, we may fix a sequence of blow-ups of $\PP_{\Delta}$ (typically along successive proper transforms of components of $Z$) with composition $\beta\colon \cx\to \PP_{\Delta}$, such that:
\begin{itemize}[leftmargin=0.5cm]
\item $\cx$ is smooth;
\item $\tfrac{1}{\phi(\ux)}$ extends to a holomorphic map $\pi\colon\cx\to \PP^1$;
\item $\cx\setminus \pi^{-1}(0)$ contains $\mathbb{G}_m^n$ as a Zariski open subset; and
\item $\text{dlog}(\ux):=\tfrac{dx_1}{x_1}\wedge\cdots\wedge\tfrac{dx_n}{x_n}$ extends to a holomorphic $n$-form on $\cx\setminus\pi^{-1}(0)$.
\end{itemize}
We shall assume that $\beta$ may be chosen in such a way that this extended form is nowhere vanishing, so that the $\tx_t:=\pi^{-1}(t)$ are Calabi-Yau for $t$ not in the discriminant locus $\Sigma$.  This is weaker than assuming $\phi$ ``generic'', and implies that $\beta_t:=\beta|_{\tx_t}\colon \tx_t\twoheadrightarrow X_t$ is a crepant resolution for $t\notin \Sigma$ (and also, that $\cx\setminus \tx_0$ is a log Calabi-Yau variety).  Despite the notation, $\tx_t$ is \emph{not} smooth for $t\in\Sigma$.

\begin{defn}\label{d31a}
(a) The \emph{compact LG-model} associated to $\phi$ is the family $\pi\colon \cx \to \PP^1_t$ of CY $(n-1)$-folds $\tx_t$ just constructed.  We may view its total space $\cx$ as a smooth compactification of the pencil $\{1=t\phi(\ux)\}\subset \mathbb{G}_m^n\times (\PP^1_t\setminus\{0\})$, and $\tx_0$ as a blow-up of $\DD_{\Delta}$.

(b) The (noncompact) \emph{LG-model} associated to $\phi$ is the restriction $\cx{\setminus} \tx_0 \to \PP_t^1{\setminus}\{0\}$ of $\pi$.
\end{defn}

\begin{example}\label{ex31a}
Here are some Laurent polynomials in 2 variables (for $n=3$, see \S\S\ref{SIVA}-\ref{SIVC}), together with the Kodaira types of the singular fibers of $\pi$ (first and last at $t=0$ resp. $\infty$):
\[\begin{tikzpicture}
\node [] at (-4.7,-0.5) {$i$};
\node [] at (-4.7,-2) {$1$};
\node [] at (-4.7,-4) {$2$};
\node [] at (-4.7,-6) {$3$};
\node [] at (-4.7,-8) {$4$};
\node [] at (-4.7,-10) {$5$};
\node [] at (-4.7,-12) {$6$};
\node [] at (1,-0.5) {$\phi^{(i)}$};
\node [] at (-2.5,-0.5) {$\Delta^{(i)}$};
\node [] at (4.8,-0.5) {singular fibers};
\node [] at (4.8,-2) {$\I_9$, $\I_1$, $\I_1$, $\I_1$(, $\I_0$)};
\node [] at (4.8,-4) {$\I_8$, $\I_1$, $\I_1$, $\I\I$};
\node [] at (4.8,-6) {$\I_5$, $\I_1$, $\I_1$, $\I_5$};
\node [] at (4.8,-8) {$\I_6$, $\underset{5\text{ times}}{\underbrace{\I_1,\ldots,\I_1}}$, $\I_1$};
\node [] at (4.8,-10) {$\I_3$, $\underset{9\text{ times}}{\underbrace{\I_1,\ldots,\I_1}}$(, $\I_0$)};
\node [] at (4.8,-12) {$\I_3$, $\I_1$, $\I\mathrm{V}^*$};
\draw [gray] (-5,-1) -- (7,-1);
\draw [gray] (-4.3,0) -- (-4.3,-12.5);
\draw [gray] (2.8,0) -- (2.8,-12.5);
\draw [gray] (-0.7,0) -- (-0.7,-12.5);
\node [] at (1,-2) {$x+y+\tfrac{1}{xy}$};
\node [align=center] at (1,-4) {$16x+y-3xy$\\$-6+\tfrac{1}{xy}$};
\node [] at (1,-6) {$\frac{(1-x)(1-y)(1-x-y)}{xy}$};
\node [align=center] at (1,-8) {{$\tfrac{x^2}{y}+\tfrac{y}{x}+\tfrac{1}{x y}$} \\{$-4^{\frac{1}{3}}3^{\frac{1}{2}}$}};
\node [align=center] at (1,-10) {$\tfrac{1}{xy}F_3$ \\{\tiny ($F_3$ a general cubic)}};
\node [] at (1,-12) {$\frac{(1+x+y)^3}{xy}$};
\filldraw [black,thick,fill=cyan] (-1.6,-2) -- (-2.5,-1.1) -- (-3.4,-2.9) -- (-1.6,-2);
\filldraw [black,thick,fill=cyan] (-1.6,-3.1) -- (-1.6,-4) -- (-3.4,-4.9) -- (-2.5,-3.1) -- (-1.6,-3.1);
\filldraw [black,thick,fill=cyan] (-2.5,-5.1) -- (-1.6,-6) -- (-1.6,-6.9) -- (-3.4,-6.9) -- (-3.4,-5.1) -- (-2.5,-5.1);
\filldraw [black,thick,fill=cyan] (-4,-7) -- (-4,-9) -- (-1,-9) -- (-4,-7);
\filldraw [black,thick,fill=cyan] (-4,-9.5) -- (-1,-12.5) -- (-4,-12.5) -- (-4,-9.5);
\draw [blue] (-1.6,-2) -- (-3.4,-2);
\draw [blue] (-2.5,-1.1) -- (-2.5,-2.9);
\draw [blue] (-1.6,-4) -- (-3.4,-4);
\draw [blue] (-2.5,-3.1) -- (-2.5,-4.9);
\draw [blue] (-1.6,-6) -- (-3.4,-6);
\draw [blue] (-2.5,-5.1) -- (-2.5,-6.9);
\draw [blue] (-1,-8) -- (-4,-8);
\draw [blue] (-3,-7) -- (-3,-9);
\draw [blue] (-1,-11.5) -- (-4,-11.5);
\draw [blue] (-3,-9.5) -- (-3,-12.5);
\end{tikzpicture}\]
For instance, the last two share the same $\PP_{\Delta}\cong \PP^2$, but have different $\cx$'s: obtained by blowing up at 9 distinct points (for the general cubic), vs. blowing up three times at each of three points.
\end{example}

\subsection{Variation of Hodge structure}\label{SIIB}

On a neighborhood of $t=0$, consider the family of vanishing $(n-1)$-cycles $\gamma_t$ on $\tx_t$ whose image under $\text{Tube}\colon H_{n-1}(\tx_t,\ZZ)\to H_n(\cx\setminus \tx_t,\ZZ)$ is $[\beta^{-1}(\mathbb{T}^n)]$, where $\mathbb{T}^n:=\cap_{i=1}^n \{|x_i|=1\}$.
The family of holomorphic forms
\begin{equation}\label{e32a}
\omega_t := \frac{1}{(2\pi\ay)^{n-1}}\mathrm{Res}_{\tx_t} \left( \frac{\mathrm{dlog}(\ux)}{1-t\phi(\ux)} \right)	\in \Omega^{n-1}(\tx_t)
\end{equation}
then has the holomorphic period
\begin{equation}\label{e32b}
A(t):=\int_{\gamma_t}\omega_t = \frac{1}{(2\pi\ay)^n}\oint_{\mathbb{T}^n}\frac{\mathrm{dlog}(\ux)}{1-t\phi(\ux)}=\sum_{k\geq 0}a_k t^k,\end{equation}
where $a_k=[\phi^k]_{\uo}$ are the constant terms in powers of $\phi$.

Writing $\pi_{\cu}\colon \cx_{\cu}\to \cu$ for the restriction of $\pi$ over $\cu:=\PP^1\setminus \Sigma$, the local system $\HH^{n-1}:=R^{n-1}(\pi_{\cu})_*\QQ$ has maximal unipotent monodromy\footnote{In this paper, a unipotent monodromy operator $T=e^N$ is \emph{maximally unipotent} if $N^{n-1}\neq 0$.} at $t=0$.  It underlies a (polarized) VHS with sheaf of holomorphic sections $\ch^{n-1}\cong\HH^{n-1}\otimes \co_{\cu}$ and Gauss-Manin connection $\nabla$.  In fact, we will work with the sub-local-system $\HH_v^{n-1}$ orthogonal to the fixed part $\HH_f^{n-1}=H^0(\cu,\HH^{n-1})$.  The corresponding sub-VHS $\ch_v^{n-1}\subseteq \ch^{n-1}$ contains the Hodge line $\ch^{n-1,0}= (\pi_{\cu})_* \Omega^{n-1}_{\cx_{\cu}}$.  On the level of $d_{\pi_{\cu}}$-closed-form representatives, the polarization $\langle\,,\,\rangle\colon \ch^{n-1}_v\times \ch^{n-1}_v\to \co$ is simply given by $\langle \omega,\eta\rangle=\int_{\tilde{X}_t}\omega_t \wedge \eta_t$.  

For simplicity, we shall henceforth \emph{assume} that $\ch^{n-1}_v$ is irreducible, not just as a $\QQ$-VHS but as a $\mathcal{D}$-module (or $\CC$-VHS).\footnote{When $n=3$, for instance, this assumption rules out (finite) monodromy in $\ch^2_{\text{alg}}$; for $n=2$, it is vacuous.}  Let $L\in \CC[t,\delta_t]$ be the differential operator with $(\ch_v^{n-1},\nabla)\cong \mathcal{D}/\mathcal{D}L$, of degree $d$ and order $r=\mathrm{rk}(\HH^{n-1}_v)$, normalized so that the coefficient of $\delta_t^r$ is $1$ at $t=0$.

A putative mirror to the LG-model is given by the following folklore

\begin{conj}
 \label{c32a}
There is a weak Fano $n$-fold $(\mathrm{X}^{\circ},\omega)$, determined by the triple $(\PP_{\Delta},\DD_{\Delta},Z)$ and admitting a toric degeneration to $\PP'_{\Delta^{\circ}}$, from which one may recover $\ch^{n-1}_v$.  \textup{(}In particular, for generic $\phi$, we have $\mathrm{X}^{\circ}=\PP_{\Delta^{\circ}}$.\textup{)}
\end{conj}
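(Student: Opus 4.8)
This is a folklore statement rather than one I can settle in general, so I will only indicate the shape of an argument and where it breaks down.  The plan is to produce $\mathrm{X}^{\circ}$ as a \emph{smoothing} of the Gorenstein toric Fano $\PP'_{\Delta^{\circ}}$ and then to identify the (regularized) quantum $\mathcal{D}$-module of $\mathrm{X}^{\circ}$ with $(\ch^{n-1}_v,\nabla)$.  The starting point is Batyrev--Borisov duality: the reflexive polytope $\Delta$ yields simultaneously the LG-model $\pi\colon\cx\to\PP^1$ of Definition~\ref{d31a} and the Gorenstein Fano $\PP'_{\Delta^{\circ}}$ (together with an MPCP-resolution if one insists on smoothness).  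When $\phi$ is generic one simply takes $\mathrm{X}^{\circ}=\PP_{\Delta^{\circ}}$, so the whole difficulty lives in the \emph{special} case: there the pencil $\{1=t\phi\}$ and its base locus $Z=\DD_{\Delta}\cap X_0$ should be read as a partial-smoothing datum, prescribing which torus-invariant strata of $\PP'_{\Delta^{\circ}}$ are to be smoothed, and one would invoke combinatorial smoothing criteria for Gorenstein toric Fanos (Mavlyutov-type Minkowski decompositions, Christophersen--Ilten, Ilten--Vollmert, and the ``Laurent-polynomial mirror'' philosophy of Coates--Corti--Galkin--Golyshev--Kasprzyk) to exhibit a genuine Fano $\mathrm{X}^{\circ}$ degenerating to $\PP'_{\Delta^{\circ}}$ compatibly with $Z$.

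Granting such an $\mathrm{X}^{\circ}$, the second half is the assertion that $\mathrm{X}^{\circ}$ ``recovers'' $\ch^{n-1}_v$, which I read as: the regularized quantum differential operator of $\mathrm{X}^{\circ}$ equals the operator $L$ of \S\ref{SIIB}.  The generic case is the template.  By Theorem~\ref{t23a}, $\hat{\tau}^{\Delta}_{\text{GKZ}}\cong H^n(\PP_{\Delta}\setminus X_{\uL},\DD_{\Delta}\setminus\partial X_{\uL})$, and $\gr^W_{n+1}$ of the right-hand side is $H^{n-1}(X_{\uL})$ (Theorem~\ref{t23b}); the residue/hyperplane picture of \S\ref{SIC} isolates its primitive, ``variable'' part, and the toric mirror theorem (Givental, Lian--Liu--Yau) identifies the resulting Picard--Fuchs operator with the regularized quantum operator of $\PP_{\Delta^{\circ}}$.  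For special $\phi$ one would run the identical chain after restricting everything along the one-parameter subfamily cut out by $\phi$, so that $\HH^{n-1}_v$ acquires the smaller rank $r=\mathrm{rk}(\HH^{n-1}_v)$ actually dictated by $\phi$, and then appeal to mirror symmetry for the Fano $\mathrm{X}^{\circ}$ --- available, in each relevant case, from explicit quantum cohomology of Grassmannians and their complete intersections --- to match its regularized quantum operator with $L$.  That $\ch^{n-1}_v$ depends only on the triple $(\PP_{\Delta},\DD_{\Delta},Z)$ is itself part of what must be proved; it would follow once the monodromy representation of $\HH^{n-1}_v$, equivalently the defining data of the local system, is shown to be read off from the combinatorics of that triple.

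The hard part --- and the reason the statement is still a conjecture --- is the existence and uniqueness of $\mathrm{X}^{\circ}$ in the special case.  A Gorenstein toric Fano need not be smoothable at all; a smoothing need not remain Fano; and even when a Fano smoothing exists it may fail to be unique or to be canonically pinned down by $(\PP_{\Delta},\DD_{\Delta},Z)$ alone.  Consequently the only route I would actually pursue for this paper is case-by-case verification for the five Mukai threefolds: there one takes $\mathrm{X}^{\circ}=V_{2n}$ with its known toric degeneration to $\PP'_{\Delta^{\circ}}$ and checks by hand that its regularized quantum operator coincides with the operator $L$ of the corresponding LG-model, which is exactly the computation carried out in \S\S\ref{SIVA}--\ref{SIVC}.
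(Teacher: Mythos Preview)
The statement is labeled a \emph{Conjecture} in the paper and is not proved there; the authors explicitly call it ``folklore'' and ``vague as stated,'' then refine it via Conjecture~\ref{c33a}~\textit{bis} (the Fourier--Laplace identification of the quantum $\mathcal{D}$-module with $\mathcal{D}/\mathcal{D}\hat{L}$) and Conjecture~\ref{c41a}.  You correctly recognize that no general proof is available and that the only honest route is a sketch plus case-by-case verification; in that sense your treatment matches the paper's.

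One correction: the case-by-case verification for the five Mukai threefolds is \emph{not} carried out in \S\S\ref{SIVA}--\ref{SIVC}.  Those sections construct higher cycles and compute truncated higher normal functions to solve Problem~\ref{c52}; they take Conjectures~\ref{c32a} and~\ref{c33a}~\textit{bis} as \emph{input}, citing \cite{Fano} and \cite{CCGGK} for the fact that the Minkowski polynomials produce LG-models satisfying those conjectures (see the opening paragraph of \S\ref{SIV}).  So your final sentence misattributes where the check lives.
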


\noindent Conversely, it is hoped that by studying ``special'' Laurent polynomials and classifying the associated local systems, one obtains a classification of weak Fano varieties admitting a toric degeneration.  While Conjecture \ref{c32a} is vague as stated, it will be refined below:  a mechanism for recovering $\ch^{n-1}_v$ (in some cases) is given in Conjecture \ref{c33a} \textit{bis}; while the Hodge-theoretic sense in which $\mathrm{X}^{\circ}$ is mirror to $\cx\setminus \tx_0\to\aff^1$ is the subject of Conjecture \ref{c41a}.

\subsection{Quantum $\mathcal{D}$-module}\label{SIIC}

For simplicity, we assume in this subsection that the Picard rank $\rho(\mathrm{X}^{\circ})=1$ (and $\mathrm{X}^{\circ}$ is Fano).  In the standard way \cite{Gv07}, one uses genus-zero Gromov-Witten theory to construct a quantum product ``$\star$'' on $H^*(\mathrm{X}^{\circ})\otimes \CC[s^{\pm 1}]$.  This is endowed with a $\mathcal{D}$-module structure by letting $\delta_s$ act via $1\otimes \delta_s -(K_{\mathrm{X}^{\circ}}\star)\otimes 1$.

Now let $\ch_v^{n-1}$ be as in Conjecture \ref{c32a}, and $L$ be the corresponding Picard-Fuchs equation, with Fourier-Laplace transform $\hat{L}$.  (Here we recall that the FL-transform and its inverse are given on functions/solutions\footnote{If $f(t)=\sum_k c_k t^k$ is a power-series, this gives $\hat{f}(s)=\sum_k \tfrac{c_k}{k!} s^k$.} by
\begin{equation}\label{e33a}
\hat{f}(s):=\frac{1}{2\pi\ay}\oint f(t)e^{{s}/{t}}\frac{dt}{t}	\;\;\;\;\text{and}\;\;\;\; \check{F}(t):=\frac{1}{t}\int_{0}^{\infty}F(s)e^{-s/t}ds,
\end{equation}
and on operators by replacing $\partial_t \leftrightarrow -s$ and $t\leftrightarrow \partial_s$.)
Then we have the following amplification of that Conjecture when $\omega=-K_{\mathrm{X}^{\circ}}$:\addtocounter{thm}{-1}
\begin{conjec}{\textbf{\textit{bis.}}}\label{c33a}
As $\mathcal{D}$-modules, $H^*(\mathrm{X}^{\circ})\otimes \CC[t^{\pm 1}] \cong \mathcal{D}/\mathcal{D}\hat{L}$.
\end{conjec}
\noindent That is, by applying inverse FL-transform to $\hat{L}$ we should obtain $\ch^{n-1}_v$.  

\begin{rem}
Conjecture \ref{c33a} \textit{bis} still makes sense without the assumption that $\rho(\mathrm{X}^{\circ})=1$, by pulling back the quantum $\mathcal{D}$-module via the anticanonical co-character of the N\'eron-Severi torus.
It has been checked for an enormous number of Fano varieties of dimension 2 and 3 \cite{CCGGK}. A more recent \emph{general} result affirms it for Fano varieties admitting a toric degeneration with terminal singularities (and preserving $H_2$), in which case the nonzero integer points of $\Delta$ are vertices and $\phi=\sum_{\um\in\Delta\cap(\ZZ^n\setminus\{\uo\})}\ux^{\um}$ \cite[Cor.~4.7]{BGM}.
\end{rem}

Assuming Conjecture \ref{c33a} \textit{bis} holds for $\mathrm{X}^{\circ}$, we write $(\hat{L}=)\sum_{i,j}\beta_{ij}t^i\delta_t^j$ for the (irregular) differential operator killing the generator $1\otimes 1$ of the quantum $\mathcal{D}$-module, and convert this into an (irregular) \emph{quantum recursion}
\begin{equation}\label{e33b}
\hat{R}\colon\;\;\;\sum_{i,j}\beta_{ij}(k-i)^j \hat{u}_{k-i}=0\;\;(\forall k)	
\end{equation}
by applying $\hat{L}$ to a power series $\sum_k \hat{u}_k s^k$.  We consider a basis of solutions $\{\hat{u}^{(i)}_.\}_{i=0}^{d-1}$, defined over the same field as $L$ (typically $\QQ$), with $\hat{u}^{(i)}_k=0$ for $k<i$ and $\hat{u}^{(i)}_i=\tfrac{1}{i!}$.  Regularizing via the inverse FL transform, $\hat{L},\hat{R}$ become $L,R$, with solutions $u_k = k!\hat{u}_k$; in particular, we have $u^{(i)}_k=0$ for $k<i$ and $u^{(i)}_i=1$.

We shall take the basis to be chosen so that $u^{(0)}_k=a_k$ is as in \eqref{e32b}, and impose one more assumption:  that the $\{a_k\}$ are nonzero.  There are various ways to further normalize $u^{(1)}_.,\ldots,u^{(d-2)}_.$.  For instance, there are $r_0:=\mathrm{rk}((\psi_0\ch^{n-1}_v)^{T_0})\leq d$ independent holomorphic solutions to $L(\cdot)=0$  at the origin, which we take to be given by the generating series of $u^{(0)}_.,\ldots, u^{(r_0-1)}_.$.\footnote{If $(\psi_0 \ch^{n-1}_v)^{T_0}$ is Hodge-Tate, with $r_0$ distinct graded pieces $\{\QQ(-p_i)\}_{i=0}^{r_0-1}$ (with $p_0=0$), one can take $\sum_k u^{(i)}_k t^k$ ($i=0,\ldots, r_0-1$) to be the $\CC$-periods of $\omega$, against local sections $\varphi^{(i)}$ of $\HH^{n-1}_{v,\CC}$ passing through $\CC(-p_i)$ at $t=0$.} The remaining $d-r_0$ generating series will then be solutions to inhomogeneous equations $L(\cdot)=g_i(t)\in \CC[t]$.\footnote{Writing $L=\sum_{\ell=0}^d t^{\ell}P_{\ell}(\delta_t)$, it is reasonable to expect that $P_0(T)=\prod_{i=0}^{r_0-1}(T-i)^{n-2p_i}$, and then we may assume that $\sum_k u_k^{(i)} t^k$ ($i=r_0,\ldots, d-1$) solves $L(\cdot)=P_0(i)t^i$.} In particular, it will be important in \S\ref{SIV} that when $d=2$ and $r=n$ ($\implies P_0(T)=T^n$), $\sum_{k\geq 1}u^{(1)}_k t^k$ solves $L(\cdot)=t$.

Slightly generalizing the definition in \cite{Gv}, we propose

\begin{defn}\label{d33a}
The \emph{Ap\'ery constants} of $\mathrm{X}^{\circ}$ are the limits
\begin{equation}\label{e33c}
\alpha_{\mathrm{X}^{\circ}}^{(i)}:=\lim_{k\to \infty}\frac{\hat{u}^{(i)}_k}{\hat{u}^{(0)}_k}=\lim_{k\to \infty}\frac{u^{(i)}_k}{u^{(0)}_k},
\end{equation}
for $1\leq i\leq d-1$.  (When $d=2$, we simply write $\alpha_{\mathrm{X}^{\circ}}$.)
\end{defn}

\begin{rem}\label{r33a}
The closely related definition in \cite{Gk} (of an Ap\'ery class $A(\mathrm{X}^{\circ})\in H_{\text{prim}}^*(\mathrm{X}^{\circ})$, with the constants appearing as its coefficients) only considers the first $r_0-1$ Ap\'ery constants, corresponding to solutions of the homogeneous equation.  (As was first realized by Galkin and Iritani in the case of Grassmannians, these should correspond to the restriction of the regularized gamma class of $\mathrm{X}^{\circ}$ to the Lefschetz coprimitive part of cohomology.)

However, the focus in [op. cit.] is on large-dimensional examples for which $r_0=d$; taking hyperplane sections preserves $d$ as well as the $\alpha^{(i)}$ (in our sense), even as $r_0$ decreases.  Since four of the five $3$-dimensional examples we consider in \S\ref{SIV} are indeed obtained as multisections of homogeneous Fano varieties with $(\dim(H^*_{\text{prim}}(\mathrm{X}^{\circ})=)\,r_0=2=d$, the ``inhomogeneous'' Ap\'ery constants for the $3$-folds in \cite{Gv} are connected to the constants in \cite{Gk} in this way (albeit with a slightly different normalization).
\end{rem}

\begin{rem}\label{r33b}
Adding a constant $c$ to $\phi$ conjugates $\hat{L}$ by $e^{cs}$, which does not affect the Ap\'ery constants. We may thus choose the constant term to make $\phi=0$ (i.e. $\tilde{X}_{\infty}$) singular.
\end{rem}

By ``specializing'' Laurent polynomials, we hope not just to classify Fanos but to arrive at a B-model, Hodge-theoretic interpretation of their Ap\'ery numbers.  But there is a new twist.  Consider the simplest case, where $d=2$ and $r_0=1$, and write $u_k^{(0)}=a_k$, $u_k^{(1)}=:b_k=0,1,\ldots$, and $\alpha_{\mathrm{X}^{\circ}}=\lim_{k\to \infty}\tfrac{b_k}{a_k}$.  While $A(t)=\sum_{k\geq 0}a_k t^k$ is just the holomorphic period, the $\{b_k\}$ and hence $\alpha_{\mathrm{X}^{\circ}}$ are \emph{not} visible from $\ch^{n-1}_v$ alone.  It is for this reason that we turn to variations of MHS in the next section.

\section{Higher normal functions}\label{SIII}

\subsection{Variation of mixed Hodge structure}\label{SIIIA}

Fix a Laurent polynomial $\phi$ subject to the assumptions in $\S$\ref{SIIA}, and write $X_t^*=X_t\cap\mathbb{G}_m^n$ for the level sets of $\tfrac{1}{\phi}$.

\begin{prop}\label{p41a}
Suppose that $\phi$ is ``generic'' in the sense that it is $\Delta$-regular.\footnote{That is, the intersections of $X_t$ ($t\neq 0$) with each of the torus-orbits in $\DD_{\Delta}$ are smooth and reduced; this is equivalent to the meaning of genericity in \S\ref{SI}.}
Then as MHSs, we have $H^n(\cx\setminus \tx_0,\tx_t)\cong H^n(\mathbb{G}_m^n,X_t^*)$ and $H^n(\cx\setminus \tx_t,\tx_0)\cong H^n(\PP_{\Delta}\setminus X_t,\DD_{\Delta}\setminus Z)$ for $t\neq 0$.
\end{prop}
\begin{proof}
With the additional genericity assumption, the construction of $\cx$ in \S\ref{SIIA} proceeds by blowing up $\PP_{\Delta}$ \emph{once} along each component of $Z$ (or rather, their successive strict transforms).  An easy local computation shows that the restriction of $\pi$ to the exceptional divisor $\cE\subset\cx$ of $\beta$ is then locally constant over $\PP^1\setminus\{0\}$.  In particular, writing $\cE_t:=\cE\cap\tx_t$, $\cE_0$ is a deformation retract of $\cE\setminus\cE_t$ for any $t\neq 0$.

Since $\mathbb{G}_m^n =\cx\setminus (\tx_0 \cup \cE)$ and $X_t^*=\tx_t\setminus\cE_t$, we have 
$$
H^n(\mathbb{G}_m^n,X_t^*)\cong H^n(\cx\setminus(\tx_0\cup\cE),\tx_t\setminus\cE_t) \cong H_n(\cx\setminus \tx_t,\tx_0\cup(\cE\setminus \cE_t))(-n)
$$
and $H^n(\cx\setminus\tx_0,\tx_t)\cong H_n(\cx\setminus\tx_t,\tx_0)(-n)$, which fit together in the long-exact sequence
\begin{equation}\label{e4.1s}
\to H_n(\cE\setminus\cE_t,\cE_0)(-n)\to H^n(\cx\setminus\tx_0,\tx_t)\to  H^n(\mathbb{G}_m^n,X_t^*)\to H_{n-1}(\cE\setminus\cE_t,\cE_0)(-n)\to
\end{equation}
whose end terms are zero by the deformation retract property.  The other isomorphism follows by Lefschetz duality.
\end{proof}

\noindent The isomorphisms in Proposition \ref{p41a} typically \emph{fail} without the genericity assumption on $\phi$; that is, the restriction morphism $H^n(\cx\setminus \tx_0,\tx_t)\to H^n(\mathbb{G}_m,X_t^*)$ is not an isomorphism, and the left-hand object better reflects the topology of the LG-model.\footnote{For example, $H^2(\mathbb{G}_m^2,X_t^*)$ does not distinguish between cases $i=1$ and $i=6$ in Example \ref{ex31a}/\ref{ex44a}, and for the non-generic case $i=6$ does not agree with $\cv_{\phi,t}$.}  So from here on we shall focus \emph{primarily} on the VMHS 
\begin{equation}\label{e41a}
\cv_{\phi,t}:= H^n(\cx\setminus \tx_0,\tx_t)
\end{equation}
over $\cu:=\PP^1\setminus\Sigma$, and its dual
\begin{equation}\label{e41b}
\cv_{\phi,t}^{\vee}(-n)\cong H^n(\cx\setminus \tx_t,\tx_0).
\end{equation}
Of course, \emph{if} $\phi$ is generic, by Prop.~\ref{p41a} and Thm.~\ref{t23a} the right-hand term of \eqref{e41b} is nothing but a restriction of $\hat{\tau}^{\Delta}_{\text{GKZ}}$.  This suggests the following generalization of Theorem \ref{t23b}:

\begin{conj}[\cite{KKP}]\label{c41a}
For $t\notin \Sigma$, we have for each $k$ 
\begin{equation}\label{e41c}
\mathrm{rk}(\gr_F^{n-k}\cv^{\vee}_{\phi} (-n))=\dim (H^{k,k}(\xc)).
\end{equation}
\end{conj}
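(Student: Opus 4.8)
The plan is as follows. By the isomorphisms \eqref{e41a}--\eqref{e41b}, the identity \eqref{e41c} is equivalent to computing the Hodge-filtration ranks of $H^n(\PP_{\Delta}\setminus X_t,\DD_{\Delta}\setminus Z)$ --- equivalently of $H^n(\mathbb{G}_m^n,X_t^*)$ --- and matching them against $\dim H^{k,k}(\xc)$. One first records the total-rank statement $\sum_k\dim\gr_F^{n-k}\cv^{\vee}_{\phi}(-n)=\dim H^*(\xc)$, which is the rank assertion underlying Conjecture~\ref{c33a}~\textit{bis}; the content of \eqref{e41c} is to distribute this rank correctly over the Hodge filtration. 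I would argue that \emph{both} sides are invariants of the combinatorial data $(\Delta,\Delta^{\circ},\text{tr}(\Delta^{\circ}))$ alone, with the generic case --- where the left side is Theorem~\ref{t23b} and $\xc=\PP_{\Delta^{\circ}}$ --- serving as the anchor, so that the point is that neither side changes as $\phi$ specializes.

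On the B-side, I would run the long exact sequence of the pair $(\mathbb{G}_m^n,X_t^*)$. The ambient terms $H^{\bullet}(\mathbb{G}_m^n)$ are Hodge--Tate, contributing pieces of type $(j,j)$ only, so the essential input is $\gr_F H^{n-1}(X_t^*)$ for the affine hypersurface $X_t^*=\tx_t\setminus\cE_t$. For this one wants a Griffiths--Danilov--Khovanskii residue presentation: after passing to the crepant resolution $\beta_t\colon\tx_t\to X_t$ prescribed by $\text{tr}(\Delta^{\circ})$, $\gr_F H^{n-1}(\tx_t)$ should be computed by meromorphic $n$-forms on $\PP_{\Delta}$ with poles along $X_t$, i.e.\ by the graded pieces of a (stringy) Jacobian-type ring attached to $(\phi,\Delta)$. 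The crux here is $\phi$-independence of these graded dimensions: when $\phi$ fails to be $\Delta$-regular, $X_t$ acquires extra cohomology classes, but these are exactly resolved by $\beta\colon\cx\to\PP_{\Delta}$ and, being monodromy-invariant, are absorbed into the fixed part $\HH_f^{n-1}$ rather than altering the stringy count --- so the graded dimensions remain the stringy Hodge numbers of $(\Delta,\text{tr})$.

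On the A-side, granting Conjecture~\ref{c32a} so that $\xc$ exists and degenerates to $\PP'_{\Delta^{\circ}}$, one expects (by stringy invariance under crepant resolution and under the toric degeneration) that $\dim H^{k,k}(\xc)$ equals the stringy Hodge number $h^{k,k}_{\text{st}}(\PP'_{\Delta^{\circ}})$, computed via the crepant resolution attached to the same triangulation $\text{tr}(\Delta^{\circ})$. It would then remain to invoke the Batyrev combinatorial mirror identity matching $h^{k,k}_{\text{st}}(\PP'_{\Delta^{\circ}})$ with the graded dimensions of the stringy Jacobian ring of $(\Delta,\text{tr})$ from the previous paragraph; combined with the exact-sequence bookkeeping --- the Tate summands of $H^{\bullet}(\mathbb{G}_m^n)$ accounting for the low-degree $h^{j,j}(\xc)$ --- this gives \eqref{e41c}.

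The hard part is precisely the non-generic range. First, the statement is genuinely conditional on Conjecture~\ref{c32a}: without the mirror Fano $\xc$, and without the expectation that it carries only $(p,p)$-classes, the right-hand side of \eqref{e41c} is undefined. Second, even granting $\xc$, one must control how the failure of $\Delta$-regularity of $\phi$ interacts with the Hodge filtration --- showing that the jump in $\dim H^{n-1}(X_t)$ relative to the $\Delta$-generic value is exactly compensated by passing to $\tx_t$ and discarding $\HH_f^{n-1}$, so that $\gr_F H^n(\mathbb{G}_m^n,X_t^*)$ stays a combinatorial invariant of $(\Delta,\Delta^{\circ},\text{tr})$. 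A uniform argument would most naturally go through a mixed-Hodge-module / vanishing-cycle analysis of the degeneration of $\hat{\tau}_{\text{GKZ}}^{\Delta}$ to the system attached to the special $\phi$, identifying $\cv^{\vee}_{\phi}(-n)$ with a subquotient whose $F$-ranks are preserved; in the threefold examples of \S\ref{SIV} one instead verifies \eqref{e41c} case by case.
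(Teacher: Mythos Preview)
The statement you are attempting to prove is labeled in the paper as a \emph{Conjecture} (attributed to \cite{KKP}), not a theorem; the paper offers no proof. The only follow-up is Remark~\ref{r41a}, which observes that a weaker consequence --- the symmetry $\mathrm{rk}(\gr_F^{n-k}\cv_{\phi})=\mathrm{rk}(\gr_F^k\cv_{\phi})$ --- has been established by Harder~\cite{Ha}. So there is nothing in the paper to compare your argument against.

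Your write-up is not really a proof either, and you seem aware of this: it is a plausible strategy outline that explicitly flags its own conditional dependencies (on Conjecture~\ref{c32a}, on stringy-Hodge-number invariance under toric degeneration for the putative $\xc$, on a residue/Jacobian-ring description surviving specialization of $\phi$) and names the ``hard parts'' without resolving them. In particular, the crucial step --- that the $\gr_F$-ranks of $H^n(\mathbb{G}_m^n,X_t^*)$ are unchanged when $\phi$ fails to be $\Delta$-regular, with the excess absorbed entirely into the fixed part --- is asserted but not argued; this is exactly where a genuine proof would have to do work, and the paper does not claim it either. Your sketch is a reasonable roadmap toward the conjecture, but it should be presented as such rather than as a proof.
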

\begin{rem}\label{r41a}
(i) The form in which we state this conjecture combines \cite[Conj.~3.7]{KKP} with \cite[Thm.~3.1]{Ha}.

(ii) By Serre duality, \eqref{e41c} would imply that $\mathrm{rk}(\gr_F^{n-k}\cv_{\phi})=\mathrm{rk}(\gr_F^k\cv_{\phi})$ for all of our LG-models, which has been proved by Harder \cite[Cor.~2.2.7]{Ha2}.

(iii) There is a plausible amplification of \eqref{e41c} to an equality of $\CC$-MHSs with nilpotent endomorphism:  on the left-hand (B-model) side, the LMHS $(\psi_t H^n(\cx\setminus \tx_0,\tx_t),F_{\lim}^{\bullet},M_{\bullet},N_0)$ at $t=0$;\footnote{Here $M_{\bullet}$ is the monodromy weight filtration of $N_0$ relative to $W_{\bullet}$, the weight filtration on the VMHS $H^n(\cx\setminus\tx_0,\tx_t)$; if this extended form of the conjecture holds then $M_{\bullet}$ is just $W(N_0)[-n]_{\bullet}$. Moreover, if $\mathrm{X}^{\circ}$ is Fano (not just weak Fano), then it implies that this LMHS is Hodge-Tate.} on the right-hand (A-model) side, $\oplus_k H^{k,k}(\mathrm{X}^{\circ})$ endowed with the ``downward'' Hodge filtration $F^{p}=\oplus_{k\leq n-p}H^{k,k}(\mathrm{X}^{\circ})$, together with $N=\cup[-K_{\mathrm{X}^{\circ}}]$ and its weight filtration $W(N)[-n]_{\bullet}$ (centered about $n$).
\end{rem}

\noindent In order to relate limits of extension classes in \eqref{e41a}-\eqref{e41b} to Ap\'ery constants of $\xc$, we shall need to kill off intermediate extensions which would otherwise ``obstruct'' these classes.  This will be accomplished by placing a ``$K$-theoretic'' constraint on the Laurent polynomial:

\begin{defn}\label{d41a}
We say $\phi$ is \emph{tempered} if the coordinate symbol $\{x_1,\ldots,x_n\}\in H_{\mathcal{M}}^n(\mathbb{G}_m^n,\QQ(n))$ lifts to a class in $H_{\mathcal{M}}^n(\cx\setminus\tx_0,\QQ(n))$.\end{defn}

\noindent Henceforth we shall mainly be concerned with the case where $\phi$ is tempered.  When $n=2$, this is just the condition that the edge polynomials of $\phi$ be cyclotomic \cite{mmm}; some methods for checking temperedness for $n=3,4$ are given in \cite[$\S$3]{DK1}.  Up to scale, tempered reflexive Laurent polynomials are defined over $\bar{\QQ}$ [op. cit., Prop. 4.16] and are thereby rigid.

\subsection{Admissible and geometric normal functions}\label{SIIIB}

A reference for the material that follows is \cite[$\S$2.11-12]{KP1}.

\begin{defn}\label{d42a}
A \emph{higher normal function} on $\cu$ is (equivalently)
\begin{enumerate}[(i),leftmargin=0.7cm]
\item a VMHS of the form $0\to\ch\overset{\imath}{\to}\cv\to\QQ(0)\to 0$, or
\item a holomorphic, horizontal section $\nu$ of $J(\ch):=\ch/(F^0\ch+\HH)$,
\end{enumerate}
where $\ch$ is a polarizable VHS of pure weight $-r<-1$.
\end{defn}
\noindent Here \emph{horizontal} means that, for each local holomorphic lift $\tilde{\nu}$ to $\ch$, we have $\nabla \tilde{\nu}\in F^{-1}\ch$.  For instance, given (i) we may locally lift $1\in \QQ(0)$ to $\nu_F \in F^0\cv$ and $\nu_{\QQ}\in\VV$ (the local system underlying $\cv$), then locally define $\tilde{\nu}$ (hence $\nu$ as in (ii)) by $\imath(\tilde{\nu})=\nu_{\QQ}-\nu_F$.

Let $\cv_e$ denote Deligne's canonical extension of $\cv$ to $\PP^1$.  Fixing disks $D_{\sigma}\subset\PP^1$ at each $\sigma\in\Sigma\,(=\PP^1\setminus \cu)$, with coordinate $t_{\sigma}$, we write $T_{\sigma}=e^{N_{\sigma}}T_{\sigma}^{\text{ss}}$ for the monodromy of $\VV$ on $D_{\sigma}^*=D_{\sigma}\setminus\{\sigma\}$, and $M^{\sigma}_{\bullet}$ for the monodromy-weight filtration of the LMHS $\psi_{\sigma}\ch$.  Suppose now that there exist ``lifts of $1$'':
\begin{itemize}[leftmargin=0.5cm]
\item $\nu_F^{\sigma}\in \Gamma(D_{\sigma},\cv_e)$ -- holomorphic, single-valued, with $\nu_F^{\sigma}|_{D_{\sigma}^*}$ in $F^0\cv$
\item $\nu_{\QQ}^{\sigma}\in \Gamma(\widetilde{D^*_{\sigma}}^{\text{un}},\VV)^{T_{\sigma}^{\text{ss}}}$ -- flat, multivalued, with $N_{\sigma}\nu_{\QQ}^{\sigma}\in M_{-2}^{\sigma}\psi_{\sigma}\ch$.
\end{itemize}
Then we may confer on $\cv_e|_{\sigma}$ the status of a MHS $\psi_{\sigma}\cv$ as follows:
\begin{itemize}[leftmargin=0.5cm]
\item the weight filtration $M^{\sigma}_{\bullet}$ extends that on $\psi_{\sigma}\ch$, adding $\nu_{\QQ}^{\sigma}$ to $M^{\sigma}_0$;
\item the Hodge filtration $F^{\bullet}_{\sigma}$ extends that on $\psi_{\sigma}\ch$, adding $\nu_F^{\sigma}(\sigma)$ to $F^0_{\sigma}$;
\item the $\QQ$-structure $(\psi_{\sigma}\cv)_{\QQ}$ is easiest to describe after a base-change (to kill off $T^{\text{ss}}_{\sigma}$), as the specialization of $\exp(-\tfrac{\log(t_{\sigma})}{2\pi\ay}N_{\sigma})\VV \subset\cv_e$ at $\sigma$.
\end{itemize}

\begin{defn}\label{d42b}
The HNF $\nu$ is \emph{admissible}, written $\nu\in \mathrm{ANF}(\ch)$, if this LMHS $\psi_{\sigma}\cv$ (equivalently, $\nu_F^{\sigma}$ and $\nu_{\QQ}^{\sigma}$) exists at each $\sigma\in\Sigma$.  If, in addition, we may choose $\nu_{\QQ}^{\sigma}$ so that $N_{\sigma}\nu^{\sigma}_{\QQ}=0$, then the \emph{limit} $\lim_{\sigma}\nu\in J((\psi_{\sigma}\ch)^{T_{\sigma}})$ is defined;\footnote{$\lim_{\sigma}\nu$ is given by $\imath(\widetilde{\lim_{\sigma}\nu})=\nu_{\QQ}^{\sigma}-\nu_F^{\sigma}(0)$, as in the passage from (i) to (ii) above.} otherwise, $\nu$ is \emph{singular} at $\sigma$.
\end{defn}

\begin{rem}\label{r42a}
Writing $\ch=\ch_f\oplus\ch_v$ for the decomposition into fixed and variable parts (with $\ch_f = H_f \otimes \co_{\cu}$), we claim that
\begin{equation}\label{e42}
0\to J(H_f) \to \mathrm{ANF}(\ch) \to \mathit{Hg}(H^1(\cu,\ch))\to 0
\end{equation}
is exact.  Indeed, since $\mathrm{ANF}(\ch)\cong \mathrm{Ext}^1_{\mathrm{AVMHS}(\cu)}(\QQ(0),\ch)$, this follows at once from the spectral sequence $$R\mathrm{Hom}_{\text{MHS}}(\QQ(0),-)\circ R\Gamma_{\cu}\implies R\mathrm{Hom}_{\text{AVMHS}(\cu)}(\QQ(0),-)$$ and triviality of $\mathrm{Ext}_{\text{MHS}}^{i>1}$, by using the identifications $H^0(\ch)=H_f$, $J(H_f)\cong \mathrm{Ext}^1_{\text{MHS}}(\QQ(0),H_f)$, and $\mathit{Hg}(H^1(\ch))\cong \mathrm{Hom}_{\text{MHS}}(\QQ(0),H^1(\ch))$.
\end{rem}

\noindent We say that $\nu\in\mathrm{ANF}(\ch^{2p-r}(p))$ is of \emph{geometric origin} when it arises from a motivic cohomology class in 
\begin{equation}\label{e42a}
H^{2p-r+1}_{\mathcal{M}}(\cx_{\cu},\QQ(p))\cong \gr_{\gamma}^p K^{\text{alg}}_{r-1}(\cx_{\cu})_{\QQ}\cong \mathrm{CH}^p(\cx_{\cu},r-1).
\end{equation}
The most convenient representatives are found in the right-hand term, the \emph{higher Chow groups} of Bloch \cite{Bl,Bl2}, which (in their cubical formulation) are defined as the $(r-1)^{\text{st}}$ homology of a complex $\left(Z^p(X,\bullet),\partial\right)$ of codim.-$p$ cycles on $X\times \square^{\bullet}$, where $\square:=\PP^1\setminus\{1\}$.\footnote{Elements of $Z^p(X,n)$ must meet all faces $X\times \square^m$ (defined by setting $\square$-coordinates to $0$ or $\infty$) properly, and $\partial$ is given by an alternating sum of intersections with codim.-$1$ faces.  See \cite[$\S$1]{DK1} for a brief introduction to higher cycles and their Hodge-theoretic invariants. } Given a cycle $\mathcal{Z}$ in \eqref{e42a}, its restrictions $\mathcal{Z}_t\in\mathrm{CH}^p(\tx_t,r-1)$ have (for each $t\in\cu$) Abel-Jacobi/regulator invariants\footnote{These may be computed as the class of a closed $(2p-r-1)$-current $(2\pi\ay)^p \delta_{\Gamma}+(2\pi\ay){p-r+1}(\mathcal{Z}_t)_* R_{r-1}$, where $R_{r-1}$ is a standard $(r-2)$-current on $\square^{r-1}$, and $\Gamma$ is a chain bounding on $(\mathcal{Z}_t)_* \RR_{<0}^{r-1}$ [loc. cit.].  One defines these \emph{regulator currents} inductively by $$R_{\ell}(x_1,\ldots,x_{\ell}):=\log(x_1)\tfrac{dx_1}{x_1}\wedge\cdots\wedge \tfrac{dx_{\ell}}{x_{\ell}}-2\pi\ay \delta_{T_{x_1}}\cdot R_{\ell-1}(x_2,\ldots,x_{\ell}),$$where $T_x:=x^{-1}(\RR_{<0})$; they satisfy $$d[R_{\ell}]=\mathrm{dlog}(\ux)-(2\pi\ay)^{\ell}\delta_{\cap^{\ell}_{i=1} T_{x_i}}+\textstyle{\sum_{i=1}^{\ell}}(-1)^i R_{\ell-1}(x_1,\ldots,\widehat{x_i},\ldots,x_{\ell})\delta_{(x_i)}.$$} 
\begin{equation}\label{e42b}
\mathrm{AJ}(\mathcal{Z}_t)=:\nu_{\mathcal{Z}}(t)\in J(H^{2p-r}(\tx_t)(p)).
\end{equation}
By \cite[Thm. 7.3]{BZ}, these glue together into an admissible normal function, so that $\mathcal{Z}\mapsto \nu_{\mathcal{Z}}$ defines a map
\begin{equation}\label{e42c}
\mathrm{AJ}_{\phi}\colon \mathrm{CH}^p(\cx_{\cu},r-1)\to \mathrm{ANF}(\ch^{2p-r}(p)). \end{equation}
Composing with projection to $\mathrm{ANF}(\ch_v^{2p-r}(p))\cong \mathit{Hg}(H^1(\cu,\ch_v^{2p-r}(p)))$ (cf. Remark \ref{r42a}) defines $\mathrm{AJ}^v_{\phi}$ and ${\nu}^v_{\mathcal{Z}}$, for which we have the following special case of the Beilinson-Hodge Conjecture:
\begin{conj}[BHC]\label{c42a}
For $\cx_{\cu}$ defined over $\bar{\QQ}$, $\mathrm{AJ}^v_{\phi}$ is surjective.  That is, admissible and geometric HNFs with values in $\ch_v^{2p-r}(p)$ are the same thing.
\end{conj}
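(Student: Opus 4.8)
In the generality stated this is a special case of the Beilinson--Hodge conjecture, so a genuine proof is beyond present reach; what one can supply is the reduction to that conjecture, together with the constructive substitute that the paper actually uses for its five families. The plan for the reduction is first to unwind the source of $\mathrm{AJ}^v_\phi$. By Remark \ref{r42a}, $\mathrm{ANF}(\ch_v^{2p-r}(p))\cong \mathit{Hg}(H^1(\cu,\ch_v^{2p-r}(p)))$, a direct summand of $\mathit{Hg}(H^1(\cu,\ch^{2p-r}(p)))$; via the Leray spectral sequence for the proper smooth $\pi_\cu\colon \cx_\cu\to\cu$ (which degenerates at $E_2$ since the base is a curve) this realizes the target of $\mathrm{AJ}^v_\phi$ as a Hodge-theoretic subquotient of $H^{2p-r+1}(\cx_\cu,\QQ(p))$. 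Now $\cx_\cu$ is smooth quasi-projective over $\bar\QQ$, so the Beilinson--Hodge conjecture for $\cx_\cu$ predicts that the regulator out of $H^{2p-r+1}_{\mathcal{M}}(\cx_\cu,\QQ(p))$ hits every Hodge class in $H^{2p-r+1}(\cx_\cu,\QQ(p))$; choosing a motivic preimage of the class attached to a given $\nu$ and feeding it to the map $\mathrm{AJ}_\phi$ of \eqref{e42c} produces an admissible normal function with the prescribed ``$H^1(\cu,\ch_v)$-shadow''.

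The remaining step is a compatibility: one must check that $\mathrm{AJ}^v_\phi$ of this motivic class equals $\nu$ on the nose, not merely something with the same image in $\mathit{Hg}(H^1(\cu,\ch_v))$. This is formal: the isomorphism $\mathrm{ANF}(\ch_v)\cong \mathit{Hg}(H^1(\ch_v))$ of Remark \ref{r42a} is induced by the very Leray edge map through which the Hodge class was produced, and the kernel $J(H_f)$ of the projection $\mathrm{ANF}(\ch)\to\mathrm{ANF}(\ch_v)$ has been divided out, so two elements of $\mathrm{ANF}(\ch_v)$ with the same image in $\mathit{Hg}(H^1(\ch_v))$ coincide. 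Thus, granting Beilinson--Hodge on $\cx_\cu$, the statement is essentially automatic; all the content is the existence of the motivic lift, which is exactly the open problem.

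Because that lift is out of reach in general, the route carried out in \S\S\ref{SIVA}--\ref{SIVC} is constructive and bypasses the conjecture: for each of the five $\phi$ one writes down an explicit higher Chow cycle and computes its regulator. In the ``$K_n$'' situation one starts from the coordinate symbol $\{x_1,\dots,x_n\}\in H^n_{\mathcal{M}}(\mathbb{G}_m^n,\QQ(n))$ and uses temperedness (Definition \ref{d41a}) to lift it across the base locus to a class in $H^n_{\mathcal{M}}(\cx\setminus\tx_0,\QQ(n))$, hence in $\mathrm{CH}^n(\cx_\cu,n-1)$; in the ``$K_{2a-n}$'' situations feeding the Ap\'ery extensions \eqref{apext} one builds an analogous torically natural cycle from the cyclotomic edge data. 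Its regulator current is then computed term by term using the inductive formulas for the $R_\ell$ recorded above, and matching the first few Taylor coefficients with the Ap\'ery numbers of the corresponding Fano --- using the inhomogeneous Picard--Fuchs equation furnished by Theorem \ref{t51} to pin down the inhomogeneous term --- yields Theorem \ref{tm}. The main obstacle is thus exactly as advertised: in full generality it is the Beilinson--Hodge conjecture itself, open already for $K_2$ of surfaces; and even in the explicit cases the real work is verifying that the hand-built cycle meets all higher-Chow faces properly and bounds the correct chain, and that the inhomogeneous term it produces is the one that forces the claimed Ap\'ery limit.
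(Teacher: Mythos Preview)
Your assessment is correct: the statement is a \emph{conjecture}, and the paper offers no proof of it --- it is introduced verbatim as ``the following special case of the Beilinson-Hodge Conjecture'' and left open. Your reduction to the general BHC via the Leray spectral sequence for $\pi_{\cu}$, together with the observation that the isomorphism $\mathrm{ANF}(\ch_v)\cong\mathit{Hg}(H^1(\cu,\ch_v))$ of Remark \ref{r42a} makes the compatibility step formal, is an accurate and useful unpacking of why the statement deserves the BHC label.

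Your account of the constructive substitute is also faithful to what the paper does: in \S\S\ref{SIVA}--\ref{SIVC} the conjecture is bypassed entirely for the five Mukai cases by writing down explicit higher Chow cycles (the lifted coordinate symbol pulled back by a correspondence in the $K_3$ cases, torically natural $K_1$ cycles built from boundary curves in the $V_{10}$ and $V_{14}$ cases) and computing their truncated normal functions directly, with Theorem \ref{t51} pinning down the inhomogeneous term from the first two Taylor coefficients. One small correction: the lift of $\{x_1,\ldots,x_n\}$ furnished by temperedness lands in $\mathrm{CH}^n(\cx\setminus\tx_0,n)$, not $\mathrm{CH}^n(\cx_{\cu},n-1)$; the restriction to fibers gives classes in $\mathrm{CH}^n(\tx_t,n)$, and it is the pullback by the involution/correspondence $\mathcal{I}$ that moves the singularity from $0$ to $\infty$ and produces the cycle in $\mathrm{CH}^n(\cx\setminus\tx_\infty,n)$ feeding the Ap\'ery extension.
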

\noindent The equivalence in Definition \ref{d42a}, as well as the notion of admissibility, persist with $\ch$ merely a VMHS; we shall loosely refer to such VMHSs (and the corresponding sections of $J(\ch)$) in this more general setting as \emph{mixed HNFs}.

\subsection{$\cv_{\phi}$ as a (mixed) higher normal function}\label{SIIIC}

First we set (dually)
\begin{equation}\label{e43a}
\left\{
\begin{split}
\overline{H}^{\ell}(X_t^*)&:=\text{coker}\{H^{\ell}(\mathbb{G}^m_n)\to H^{\ell}(X_t^*)\}\\
\underline{H}_{\ell}(X_t^*)&:=\ker\{H_{\ell}(X_t^*)\to H_{\ell}(\mathbb{G}_m^n)\}
\end{split}\right. 
\end{equation}
and write $\cv_{\phi,t}^*:=H^n(\mathbb{G}_m^n,X_t^*).$
Since $X_t^*$ is affine and $H^n(\mathbb{G}_m^n)\cong \QQ(-n)$, the relative cohomology sequence of the pair $(\mathbb{G}_m^n,X_t^*)$ yields an exact sequence of MHS
\begin{equation}\label{e43b}
0\to \overline{H}^{n-1}(X_t^*)\to\cv_{\phi,t}^*\overset{\Xi}{\to} \QQ(-n)\to 0.
\end{equation}
The kernel and cokernel of the restriction map
$$
\theta\colon \cv_{\phi,t}\to \cv_{\phi,t}^*
$$
may be read off from the end terms of \eqref{e4.1s}. Since we are not assuming that $\phi$ is $\Delta$-regular, these may no longer be zero, but we claim they are ``mild'' in the sense of having weights between $n$ and $2n-2$.  More precisely:
\begin{lem}\label{lnew}
The map $\theta$ induces isomorphisms on $\gr^W_{2n}(\cong \QQ(-n))$, $\gr^W_{2n-1}(\cong \{0\})$, and $\gr^W_{n-1}$ \textup{(}namely, $\ch_{v}^{n-1}\cong W_{n-1}\overline{H}^{n-1}(X_t^*)=W_{n-1}H^{n-1}(X_t^*)\textup{)}$.
\end{lem}
\begin{proof}
Clearly $\gr^W_{2n}\cv_{\phi,t}^*\cong \QQ(-n)$ and $\gr^W_{2n-1}\cv_{\phi,t}^*\cong\{0\}$.  Since $\mathrm{dlog}(\ux)$ extends to $\Omega^n(\cx\setminus \tx_0)$, the composition $\Xi\circ \theta\colon \cv_{\phi,t}\to H^n(\mathbb{G}_m^n)$ is surjective; and by Prop.~\ref{p44a} and Rem.~\ref{r44a} below, $\cv_{\phi,t}^{\circ}:=\ker(\Xi\circ \theta)$ has weights in $[n-1,2n-2]$.  Notice that we can write \eqref{e41a} as an extension
\begin{equation}\label{eext}
0\to \cv_{\phi,t}^{\circ}\to \cv_{\phi,t}\overset{\Xi\circ \theta}{\to} \QQ(-n)\to 0
\end{equation}
in analogy to \eqref{e43b}.

For $\gr^W_{n-1}$, we have $W_{n-1}\cv_{\phi,t}^*=W_{n-1}\overline{H}^{n-1}(X_t^*)=W_{n-1}H^{n-1}(X_t^*)$ by \eqref{e43a} and \eqref{e43b}, and $W_{n-1}\cv_{\phi,t}=\mathrm{coker}\{H^{n-1}(\cx\setminus \tx_0)\to H^{n-1}(\tx_t)\}=\ch_{v,t}^{n-1}$ by the global invariant cycle theorem. Clearly $\theta$ restricts to a map $\ch_{v,t}^{n-1}\to \overline{H}^{n-1}(X_t^*)$, which surjects onto the $W_{n-1}$ part by standard mixed Hodge theory, and is injective by the assumed irreducibility of $\ch_v^{n-1}$.
\end{proof}

Let $\Theta\colon \cv^{\circ}_{\phi,t}\to \overline{H}^{n-1}(X_t^*)$ be the map induced by $\theta$, so that $\eqref{eext} \mapsto \eqref{e43b}$ in 
\begin{equation}\label{bigT}
\Theta_*\colon \mathrm{Ext}^1_{\text{MHS}}(\QQ(-n),\cv^{\circ}_{\phi,t})\to \mathrm{Ext}^1_{\text{MHS}}(\QQ(-n),\overline{H}^{n-1}(X_t^*)).
\end{equation}
That is, we may view $\cv_{\phi}$ and $\cv_{\phi}^*$ as mixed HNFs related by $\Theta_*$, with Hodge-Deligne diagrams \emph{both} of the form on the left below:

\[\begin{tikzpicture}[scale=0.6]
\draw [<->,thick,gray] (0,5) -- (0,0) -- (5,0);
\draw [<->,thick,gray] (10,5) -- (10,0) -- (15,0);
\draw [<-left hook,thick,black] (5.5,2.5) -- (8.5,2.5);
\draw [thick,gray] (4,-0.2) -- (4,0.2);
\draw [thick,gray] (3,-0.2) -- (3,0.2);
\draw [thick,gray] (-0.2,3) -- (0.2,3);
\draw [thick,gray] (-0.2,4) -- (0.2,4);
\node [gray] at (3,-0.4) {\tiny $n-1$};
\node [gray] at (4,-0.4) {\tiny $n$};
\node [gray] at (-0.8,3) {\tiny $n-1$};
\node [gray] at (-0.4,4) {\tiny $n$};
\node [gray] at (5.3,0) {\small $p$};
\node [gray] at (0.4,5) {\small $q$};
\node [] at (7,3) {\small tempered};
\node [] at (7,2) {\small case};
\filldraw [color=blue, fill=blue!5, ultra thick, decorate, decoration={snake,amplitude=1pt}] (3,0) -- (3,3) -- (0,3) -- (3,0);
\filldraw [blue] (4,4) circle (3pt) node [anchor=south] {\small $\QQ(-n)$};
\filldraw [blue] (0,3) circle (3pt);
\filldraw [blue] (3,3) circle (3pt);
\filldraw [blue] (3,0) circle (3pt);
\draw [thick,gray] (14,-0.2) -- (14,0.2);
\draw [thick,gray] (13,-0.2) -- (13,0.2);
\draw [thick,gray] (9.8,3) -- (10.2,3);
\draw [thick,gray] (9.8,4) -- (10.2,4);
\node [gray] at (13,-0.4) {\tiny $n-1$};
\node [gray] at (14,-0.4) {\tiny $n$};
\node [gray] at (9.2,3) {\tiny $n-1$};
\node [gray] at (9.6,4) {\tiny $n$};
\node [gray] at (15.3,0) {\small $p$};
\node [gray] at (10.4,5) {\small $q$};
\draw [color=blue, ultra thick, decorate, decoration={snake,amplitude=1pt}] (13,0) -- (10,3);
\filldraw [blue] (14,4) circle (3pt) node [anchor=south] {\small $\QQ(-n)$};
\filldraw [blue] (10,3) circle (3pt);
\filldraw [blue] (13,0) circle (3pt);
\node [blue] at (14,2) {\small $W_{n-1}H^{n-1}(X_t^*)$};
\end{tikzpicture}\]
\begin{prop}\label{p43a}
$\cv_{\phi}^*$ is the \textup{(}mixed\textup{)} geometric HNF associated to the coordinate symbol $\{\ux\}=\{x_1,\ldots,x_n\}\in \mathrm{CH}^n(\mathbb{G}_m^n,n)$. Moreover, if $\phi$ is tempered, then $\cv_{\phi}$ and $\cv_{\phi}^*$ have a common \textup{(}pure\textup{)} geometric sub-HNF with Hodge-Deligne diagram of the form shown on the right, associated to the lift of $\{\ux\}$ to $\mathrm{CH}^n(\cx\setminus\tx_0,n)$ \textup{(}cf.~Definition \ref{d41a}\textup{)}.
\end{prop}
\begin{proof}
Each $\gamma\in \underline{H}_{n-1}(X_t^*,\QQ)$ may be written as $\partial\mu$ for a $n$-chain $\mu$ on $\mathbb{G}_m^n$.  The extension class $\nu_{\phi}(t)$ of \eqref{e43b} in $$J(\overline{H}^{n-1}(X_t^*)(n))\cong \mathrm{Hom}(\underline{H}_{n-1}(X_t^*,\QQ),\CC/\QQ(n))$$
is then computed on $\gamma$ (using Stokes's theorem) by $$\langle\tilde{\nu}_{\phi}(t),\gamma\rangle = \int_{\mu}\mathrm{dlog}(\ux)\underset{\QQ(n)}{\equiv}\int_{\mu}d[R_n]=\int_{\gamma}R_n|_{X_t^*}=\langle \mathrm{AJ}(\{\ux\}|_{X_t^*}),\gamma\rangle.$$
Therefore $\nu_{\phi}$ is the \emph{geometric} (mixed) HNF associated to the coordinate symbol $\{\ux\}=\{x_1,\ldots,x_n\}\in \mathrm{CH}^n(\mathbb{G}_m^n,n)$ (i.e., the graph of this $n$-tuple, viewed as a cycle in $\mathbb{G}_m^n\times\square^n$).

Now setting $H^n(\cx\setminus \tx_0)^{\circ}:= \ker\{H^n(\cx\setminus \tx_0)\to H^n(\tx_t)\}$, we can write $\cv_{\phi,t}$ as an extension in a manner different from \eqref{eext}, as
\begin{equation}\label{eext2}
0\to \ch_{v,t}^{n-1}\to \cv_{\phi,t} \to H^n(\cx\setminus \tx_0)^{\circ}\to 0.
\end{equation}
If $\phi$ is tempered, then $\{\ux\}$ lifts to $\xi \in \mathrm{CH}^n(\cx\setminus \tx_0,n)$, inducing a section of the MHS-morphism $ H^n(\cx\setminus \tx_0)^{\circ}\twoheadrightarrow H^n(\mathbb{G}_m^n)\cong \QQ(-n)$ under which \eqref{eext2} pulls back to an extension of the form
\begin{equation}\label{eext3}
0\to \ch_{v,t}^{n-1}\to \cv_{\xi,t}\to \QQ(-n)\to 0.
\end{equation}
The resulting morphism of sequences from \eqref{eext3} to \eqref{eext} is a partial splitting as shown in the picture above.  To see what it means geometrically, observe that the fiberwise restrictions $\xi_t\in \mathrm{CH}^n(\tx_t,n)$ of $\xi$ compute $\nu_{\phi}(t)$ via the composition
\begin{equation}\label{eAJ}
\mathrm{CH}^n(\tx_t,n)\overset{\mathrm{AJ}}{\to}J(H^{n-1}(\tx_t)(n))\to J(W_{n-1}H^{n-1}(X_t^*))\hookrightarrow J(\overline{H}^{n-1}(X_t^*)(n)).
\end{equation}
So the ``partial extensions'' of $\QQ(-n)$ by $\overline{H}^{n-1}(X_t^*)/W_{n-1}$ in \eqref{e43b} also split ($\forall t$). The upshot is that the extension classes of both \eqref{e43b} and \eqref{eext} reduce to (the image of) the class of \eqref{eext3}, geometrically described by $\mathrm{AJ}(\xi_t)$ in \eqref{eAJ}.
\end{proof}

Having exhibited $\cv_{\phi}^*$ as the \emph{regulator extension}, we turn to its dual
\begin{equation}\label{e43c}
0\to \QQ(0)\to(\cv^{*}_{\phi,t})^{\vee}(-n)\to\underline{H}_{n-1}(X_t^*)(-n)\to 0,	
\end{equation}
which identifies with the localization sequence
\begin{equation*}
0\to H^n(\PP_{\Delta},\DD_{\Delta})\to H^n(\PP_{\Delta}\setminus X_t,\DD_{\Delta}\setminus Z) \overset{\text{Res}}{\to}\ker\{H^{n-1}(X_t,Z)\overset{\imath_*}{\to} H^{n+1}(\PP_{\Delta},\DD_{\Delta})\}(-1)\to 0.
\end{equation*}
Writing $\Omega_t :=\frac{\mathrm{dlog}(\ux)}{1-t\phi(\ux)}\in \Omega^n(\PP_{\Delta}\setminus X_t)$ (so that $(2\pi\ay)^{n-1}\omega_t=\mathrm{Res}(\Omega_t)$), we obtain periods of the extension by lifting $(2\pi\ay)^{n-1}[\omega_t]\in F^n\{\ker(\imath_*)(-1)\}$ to $[\Omega_t]\in F^n H^n(\PP_{\Delta}\setminus X_t,\DD_{\Delta}\setminus Z;\CC)$ and pairing with the lift of $1^{\vee}\in\QQ(0)^{\vee}$ to $T_{\ux}:=\cap_{i=1}^n T_{x_i}\in H_n(\PP_{\Delta}\setminus X_t,\DD_{\Delta}\setminus Z;\QQ)$.  This yields
\begin{equation}\label{e43d}
\int_{({-}1)^{n{-}1} T_{\ux}}\Omega_t = \int_{\PP_{\Delta}}\tfrac{d[R_n]}{({-}2\pi\ay)^n}\wedge \Omega_t =\int_{\PP_{\Delta}}{R_n}\wedge \tfrac{d[\Omega_t]}{(2\pi\ay)^n}\equiv{\langle\tilde{\nu}_{\phi}(t),[\omega_t]\rangle},
\end{equation}
where the last equality only holds if $T_{\ux}\cap X_t^*=\emptyset$, and only modulo \emph{relative} periods $(2\pi\ay)^n\int_{\eta}\omega_t$ (with $\eta\in H_{n-1}(X_t,Z;\QQ)$).

\begin{rem}\label{r43a}
The left-hand term of \eqref{e43d} is a special case of the GKZ integral \eqref{e22f}, but with non-general $\phi$.  This type of integral also appears in Feynman integral computations \cite{BKV1,BKV2}.
\end{rem}

When $\phi$ is tempered (so that $\tilde{\nu}_{\phi}(t)\in H^{n-1}(\tx_t,\CC)$), and certain technical assumptions hold (cf. \cite[$\S$4.2]{BKV1}), the last equality of \eqref{e43d} holds modulo usual periods of $\omega_t$.   The VMHS picture, for $(\cv_{\phi}^*)^{\vee}(-n)$ as well as $\cv_{\phi}^{\vee}(-n)$, is of course dual to that above:\footnote{In view of the proof of Lemma \ref{lnew} and the polarization, we have $\gr^W_{n-1}H^{n-1}(X_t,Z)\cong\ch_{v,t}^{n-1}$.}
\[\begin{tikzpicture}
[scale=0.6]
\draw [<->,thick,gray] (0,5.5) -- (0,0) -- (5.5,0);
\draw [<->,thick,gray] (10,5.5) -- (10,0) -- (15.5,0);
\draw [->>,thick,black] (5.5,2.5) -- (8.5,2.5);
\draw [thick,gray] (4,-0.2) -- (4,0.2);
\draw [thick,gray] (5,-0.2) -- (5,0.2);
\draw [thick,gray] (-0.2,5) -- (0.2,5);
\draw [thick,gray] (-0.2,4) -- (0.2,4);
\node [gray] at (5,-0.4) {\tiny $n+1$};
\node [gray] at (4,-0.4) {\tiny $n$};
\node [gray] at (-0.8,5) {\tiny $n+1$};
\node [gray] at (-0.4,4) {\tiny $n$};
\node [gray] at (5.8,0) {\small $p$};
\node [gray] at (0.4,5.4) {\small $q$};
\node [] at (7,3) {\small tempered};
\node [] at (7,2) {\small case};
\filldraw [color=blue, fill=blue!5, ultra thick, decorate, decoration={snake,amplitude=1pt}] (1,4) -- (1,1) -- (4,1) -- (1,4);
\filldraw [blue] (0,0) circle (3pt) node [anchor=east] {\small $\QQ(0)$};
\filldraw [blue] (1,1) circle (3pt);
\filldraw [blue] (4,1) circle (3pt);
\filldraw [blue] (1,4) circle (3pt);
\draw [thick,gray] (14,-0.2) -- (14,0.2);
\draw [thick,gray] (15,-0.2) -- (15,0.2);
\draw [thick,gray] (9.8,5) -- (10.2,5);
\draw [thick,gray] (9.8,4) -- (10.2,4);
\node [gray] at (15,-0.4) {\tiny $n+1$};
\node [gray] at (14,-0.4) {\tiny $n$};
\node [gray] at (9.2,5) {\tiny $n+1$};
\node [gray] at (9.6,4) {\tiny $n$};
\node [gray] at (15.8,0) {\small $p$};
\node [gray] at (10.4,5.4) {\small $q$};
\draw [color=blue, ultra thick, decorate, decoration={snake,amplitude=1pt}] (14,1) -- (11,4);
\filldraw [blue] (10,0) circle (3pt) node [anchor=east] {\small $\QQ(0)$};
\filldraw [blue] (11,4) circle (3pt);
\filldraw [blue] (14,1) circle (3pt) node [anchor=west] {$[\omega_t]$};
\node [blue] at (15.2,3.2) {\tiny $\{\gr^W_{n-1}H^{n-1}(X_t,Z)\}(-1)$};
\end{tikzpicture}\]
It will be convenient to enshrine the right-hand term of \eqref{e43d} in a
\begin{defn}\label{d43a}
The \emph{truncated higher normal function} (THNF) associated to a tempered $\phi$ is (any branch of) the multivalued function $V_{\phi}(t):=\langle\tilde{\nu}^v_{\phi}(t),[\omega_t]\rangle$.
\end{defn}

\noindent Later we shall choose a branch of $V_{\phi}$; but independent of this choice, it follows from \cite{dAMS} that the THNF satisfies an inhomogeneous Picard-Fuchs equation
\begin{equation}\label{e43e}
LV_{\phi}(t)=g_{\phi}(t)	
\end{equation}
where $g_{\phi}\in \bar{\QQ}(t)$ and $L$ (from $\S$\ref{SIIB}) depend only on $\phi$.

\begin{rem}\label{r43b}
Suppose $\mathrm{rk}(\HH_v^{n-1})=n$, write $L=\sum_{i=0}^n q_{n-i}(t)\delta^i_t$ (with $q_0(0)=1$), and let $\mathcal{Y}(t):=\langle (2\pi\ay)^{n-1}\omega_t,\nabla_{\delta_t}^{n-1}\omega_t\rangle$ denote the Yukawa coupling.  Taking $\gamma_t^{\vee}\in (\HH_v^{n-1})_{T_0}$ a local generator with $\langle\gamma_t,\gamma_t^{\vee}\rangle =1$, define $D_{\phi}\in\QQ^*$ by $N_0^{n-1}\gamma_t^{\vee}=:D_{\phi}\gamma_t$.  By \cite[Cor. 4.5]{DK1}, we have $g_{\phi}(t)=q_0(t)\mathcal{Y}(t)$.  Moreover, if the $\{\tilde{X}_{\sigma}\}_{\sigma\in \Sigma\setminus\{0,\infty\}}$ have only nodal singularities, then by \cite[Prop. 7.1]{Ke20} $\mathcal{Y}(t)=\tfrac{D_{\phi}}{q_0(t)}$.
\end{rem}

\subsection{$\cv_{\phi}$ at infinity}\label{SIIID}

While $\nu_{\phi}$ is singular at $0$, we can compute its limit at $t=\infty$.  First we shall isolate a part of the extension that splits off whether or not $\phi$ is tempered (which we don't assume here).
\begin{defn}\label{d44a}
For $\sigma\in\Sigma$, the (pure weight $\ell$) \emph{phantom cohomology}
\begin{equation}\label{e44a}\small
\mathrm{Ph}^{\ell}_{\sigma}:=\ker\{H^{\ell}(X_{\sigma})\to\psi_{\sigma}\ch^{\ell}\}=\text{im}\{H_{2n-\ell}(X_{\sigma}){\small(-n)}\to H^{\ell}(X_{\sigma})\}
\end{equation}\normalsize
measures the cycles that vanish on the nearby fiber.  For any subset $\Sigma'\subseteq\Sigma$ (e.g. $\Sigma^*:=\Sigma\setminus\{0,\infty\}$), put $\mathrm{Ph}^{\ell}_{\Sigma'}:=\oplus_{\sigma\in\Sigma'}\mathrm{Ph}^{\ell}_{\sigma}$.
\end{defn}
\noindent (We shall also write $\cx_{S}$ resp.~$\tx_{S}$ for $\pi^{-1}(S)$ when $S$ is open resp.~finite.)
\begin{prop}\label{p44a}
In $\mathrm{AVMHS}(\cu)$ we have $\cv_{\phi}=\mathcal{A}^{\dagger}_{\phi}\oplus \mathrm{Ph}^n_{\Sigma\setminus\{0\}}$, where $\mathrm{Ph}^n_{\Sigma\setminus\{0\}}$ is constant of weight $n$, and $\mathcal{A}^{\dagger}_{\phi,t}$ is an extension of $\mathrm{IH}^1(\PP^1\setminus\{0\},\ch^{n-1}_v)$ \textup{(}also constant, but mixed\textup{)} by $\ch_v^{n-1}$.	Viewing $\mathcal{A}_{\phi}^{\dagger}$ instead as an extension of $\QQ(-n)$ recovers $\nu_{\phi}$.
\end{prop}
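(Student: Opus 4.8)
The plan is to locate the variable VHS $\ch^{n-1}_v$ at the bottom of $\cv_\phi$, split off a \emph{constant} quotient via the Leray spectral sequence of the proper map $\bar\pi\colon\cx\setminus\tx_0\to\aff^1:=\PP^1\setminus\{0\}$, and then use the decomposition theorem (in mixed Hodge modules) to break that quotient into a constant $\IH$-piece, which stays attached to $\ch^{n-1}_v$ and assembles $\mathcal{A}^{\dagger}_{\phi}$, and the constant phantom piece, which peels off as a genuine direct summand. First, the long exact sequence of the pair $(\cx\setminus\tx_0,\tx_t)$ together with $H^n(\tx_t)=0$ (the fibers being $(n-1)$-folds) gives an exact sequence of VMHS over $\cu$
\[
0\to\text{coker}\bigl(H^{n-1}(\cx\setminus\tx_0)\overset{\rho}{\to}H^{n-1}(\tx_t)\bigr)\to\cv_\phi\to H^n(\cx\setminus\tx_0)\to 0 ,
\]
in which $H^n(\cx\setminus\tx_0)$ is a single (constant) MHS.

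Next, apply the decomposition theorem to $\bar\pi$ (proper, source smooth): in $D^b(\mathrm{MHM}(\aff^1))$, $M:=\bar\pi_*\QQ^H_{\cx\setminus\tx_0}$ is a direct sum of shifted $\mathrm{IC}$-extensions of the semisimple constituents of $\HH^j:=R^j(\pi_\cu)_*\QQ$ and of skyscraper Hodge modules at the points of $\Sigma\setminus\{0\}$. Since $\aff^1$ is affine of cohomological dimension $1$ (so the constant summand $\underline{H^{n-1}_f}$ carries no $H^1$, and no higher differentials occur), since $\ch^{n-1}_v$ is assumed irreducible, and since $R^n\bar\pi_*\QQ$ is a skyscraper with stalk $H^n(\tx_\sigma)=\mathrm{Ph}^n_\sigma$ (because $\psi_{\sigma}\HH^n=0$), taking $H^n(\aff^1,-)$ produces
\[
H^n(\cx\setminus\tx_0)\;=\;\IH^1(\PP^1\setminus\{0\},\ch^{n-1}_v)\;\oplus\;\mathrm{Ph}^n_{\Sigma\setminus\{0\}} ,
\]
the only $\mathrm{IC}$-summand reaching total degree $n$ being $\mathrm{IC}_{\aff^1}(\ch^{n-1}_v)$, through its degree-$1$ cohomology. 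The same decomposition computes $\text{im}(\rho)$: after restriction to $\tx_t$, the skyscrapers and the $\mathrm{IC}$-extensions of the $\HH^j$ with $j<n-1$ contribute nothing, $\mathrm{IC}_{\aff^1}(\ch^{n-1}_v)$ has no nonzero global sections, and only $\underline{H^{n-1}_f}$ survives, so $\text{im}(\rho)=H^{n-1}_f$ and $\text{coker}(\rho)\cong\ch^{n-1}_v$. Hence $\cv_\phi$ carries a three-step filtration with successive quotients $\ch^{n-1}_v$, the constant $\IH^1(\PP^1\setminus\{0\},\ch^{n-1}_v)$, and the constant $\mathrm{Ph}^n_{\Sigma\setminus\{0\}}$.

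To peel off the top piece, use proper base change along $\bar\pi$ to write
\[
\cv_\phi=H^n\bigl(\aff^1,\,\mathrm{Cone}(M\to i_{t,*}i_t^*M)[-1]\bigr) ,
\]
and apply the decomposition of $M$ summand by summand. Since $i_t^*$ annihilates every skyscraper summand at $\sigma\in\Sigma\setminus\{0\}$, those summands pass through the cone unchanged and split off a constant direct summand of $\cv_\phi$ equal to $\bigoplus_{\sigma\neq 0}H^n(\tx_\sigma)=\mathrm{Ph}^n_{\Sigma\setminus\{0\}}$; let $\mathcal{A}^{\dagger}_{\phi}$ be the complementary summand. As a direct summand in $D^b(\mathrm{MHM})$ of the geometric, hence admissible, VMHS $\cv_\phi$, $\mathcal{A}^{\dagger}_{\phi}$ is admissible, and by the previous paragraph it is an extension of the constant $\IH^1(\PP^1\setminus\{0\},\ch^{n-1}_v)$ by $\ch^{n-1}_v$. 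Finally, $\mathrm{Ph}^n_{\Sigma\setminus\{0\}}$ has weights $\le n<2n$, so it maps to $0$ under $\cv_\phi\twoheadrightarrow\QQ(-n)$ and thus lies in $\overline{H}^{n-1}(X_t^*)=\ker(\cv_\phi\twoheadrightarrow\QQ(-n))$ of \eqref{e43b}; therefore that surjection factors through $\cv_\phi\twoheadrightarrow\mathcal{A}^{\dagger}_{\phi}$ and presents $\mathcal{A}^{\dagger}_{\phi}$ as an extension of $\QQ(-n)$ by $\overline{H}^{n-1}(X_t^*)/\mathrm{Ph}^n_{\Sigma\setminus\{0\}}$ whose class is the image of $[\cv_\phi]$ under $\overline{H}^{n-1}(X_t^*)\twoheadrightarrow\overline{H}^{n-1}(X_t^*)/\mathrm{Ph}^n_{\Sigma\setminus\{0\}}$. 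Since $\mathrm{Ph}^n_{\Sigma\setminus\{0\}}$ is a direct summand of $\cv_\phi$, the $\mathrm{Ext}^1(\QQ(-n),\mathrm{Ph}^n_{\Sigma\setminus\{0\}})$-component of $[\cv_\phi]$ vanishes, and what remains is precisely the class of the (mixed) HNF $\nu_\phi$.

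I expect the main obstacle to lie in the mixed-Hodge-module decomposition-theorem bookkeeping: pinning the $\mathrm{IC}$-contribution to $H^n(\aff^1,-)$ down to $\IH^1(\PP^1\setminus\{0\},\ch^{n-1}_v)$ and not a larger intersection cohomology group (this rests on the total-degree count $j+k=n$ with $k\in\{0,1\}$ on the affine curve $\aff^1$, together with $\HH^n=0$), and in checking carefully that the skyscraper summands split off the \emph{relative} object $\cv_\phi$ --- not just the absolute $H^n(\cx\setminus\tx_0)$ --- and that, being a trivial direct summand, they carry none of the extension data of the normal function $\nu_\phi$.
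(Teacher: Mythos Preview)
Your overall strategy --- decomposition theorem applied to $\bar\pi_*\QQ^H$, combined with the long exact sequence of the pair --- matches the paper's.  The cone argument you use to split off the skyscrapers is a clean variant of the paper's more hands-on splitting, which lifts $\mathrm{Ph}^n_{\Sigma\setminus\{0\}}$ into $H^n(\cx_{\cs},\tx_t)$ via the Gysin map $H_n(\tx_{\Sigma\setminus\{0\}})(-n)\cong H^n_{\tx_{\Sigma\setminus\{0\}}}(\cx_{\cs},\tx_t)\to H^n(\cx_{\cs},\tx_t)$; both are correct.

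There is, however, a genuine gap.  You assert $H^n(\tx_t)=0$ ``the fibers being $(n-1)$-folds'', and likewise that $R^n\bar\pi_*\QQ$ is a skyscraper.  This is false in general: a compact $(n-1)$-fold has cohomology up to degree $2(n-1)$, so $H^n(\tx_t)$ can be nonzero once $n\ge 2$ (e.g.\ $H^2$ of an elliptic curve, $H^4$ of a CY threefold).  Consequently your short exact sequence
\[
0\to\mathrm{coker}(\rho)\to\cv_\phi\to H^n(\cx\setminus\tx_0)\to 0
\]
is wrong at the right: the map $\cv_\phi\to H^n(\cx\setminus\tx_0)$ is not surjective, and your computation of $H^n(\cx\setminus\tx_0)$ omits the term $H^n_f$ coming from $H^0(\aff^1,\mathrm{IC}(\HH^n_f))$.

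The repair is exactly what the paper does.  The decomposition theorem gives
\[
H^n(\cx\setminus\tx_0)\;\cong\; H^n_f\;\oplus\;\IH^1(\PP^1\setminus\{0\},\ch^{n-1}_v)\;\oplus\;\mathrm{Ph}^n_{\Sigma\setminus\{0\}},
\]
and under the restriction $H^n(\cx\setminus\tx_0)\to H^n(\tx_t)$ the summand $H^n_f$ injects while the other two die; hence the \emph{image} of $\cv_\phi$ in $H^n(\cx\setminus\tx_0)$ is $\IH^1\oplus\mathrm{Ph}^n$, not all of $H^n(\cx\setminus\tx_0)$.  With this correction (and the parallel one identifying $\mathrm{coker}(\rho)\cong\ch^{n-1}_v$, which you do argue correctly), the rest of your proof goes through unchanged.
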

\begin{proof}
By the Decomposition Theorem (cf. \cite[(5.9)]{KL}), for any proper algebraic subset $\cs\subset\PP^1$ we have
\begin{equation}\label{e44b}
H^{\ell}(\cx_{\cs})\cong H^{\ell}_f \oplus \mathrm{IH}^1(\cs,\ch^{\ell-1})\oplus \mathrm{Ph}^{\ell}_{\Sigma\cap\cs}	
\end{equation}
as MHS.  (See [op.~cit., Prop.~5.5(i)] for the fact that $\mathrm{Ph}^{\ell}_{\Sigma\cap \cs}$ is pure of weight $\ell$, and also of level $\leq \ell-2$.) The long exact sequence associated to $(\cx_{\cs},X_t)$ (for $t\in\cu$) therefore exhibits $H^n(\cx_{\cs},\tx_t)$ as an extension of $\mathrm{IH}^1(\cs,\ch^{n-1})\oplus \mathrm{Ph}^n_{\Sigma\cap\cs}$ by $\ch^{n-1}_v$.  But as a sub-MHS of $H^n(\cx_{\cs})$, $\mathrm{Ph}_{\Sigma\cap\cs}^n$ is the image of $H_n(\tx_{\Sigma\cap\cs})(-n)\cong H^n_{\tx_{\Sigma\cap\cs}}(\cx_{\cs})\cong H^n_{\tx_{\Sigma\cap\cs}}(\cx_{\cs},\tx_t)$ under the Gysin map, which obviously factors through $H^n(\cx_{\cs},X_t)$, splitting that part of the extension. Specializing to $\cs=\PP^1\setminus\{0\}$, we have $\mathrm{IH}^1(\cs,\ch^{n-1}_f)\cong H^1(\cs)\otimes H^{n-1}_f=\{0\}$, and so $\mathrm{IH}^1(\cs,\ch^{n-1})=\mathrm{IH}^1(\cs,\ch^{n-1}_v)$.  Finally, since $\mathcal{A}_{\phi}^{\dagger}$ already gives the class of $\cv_{\phi}$ in LHS\eqref{bigT}, applying $\Theta_*$ yields $\nu_{\phi}$.
\end{proof}

\begin{rem}\label{r44a}
An initial observation about $\mathcal{A}_{\phi}^{\dagger}=\cv_{\phi}/\mathrm{Ph}^n_{\Sigma\setminus\{0\}}$ is that it elucidates what temperedness achieves.  Comparing with the discussion around Lemma \ref{lnew}, we see that it is built out of the three parts
\begin{align}
\tfrac{W_{2n}}{W_{2n-2}}\mathcal{A}_{\phi}^{\dagger}&\cong \gr^W_{2n}\mathrm{IH}^1(\PP^1\setminus\{0\},\ch_v^{n-1})\cong\QQ(-n) \label{e44'} \\
\tfrac{W_{2n-2}}{W_{n-1}}\mathcal{A}_{\phi}^{\dagger}&\cong W_{2n-2}\mathrm{IH}^1(\PP^1\setminus\{0\},\ch_v^{n-1})\;\left(\;\cong \overline{H}^{n-1}(X_t^*)/W_{n-1} \text{ if $\phi$ is $\Delta$-regular}\right)\label{e44''}\\
\gr^W_{n-1}\mathcal{A}_{\phi}^{\dagger}&\cong \ch^{n-1}_v.\label{e44'''}
\end{align}
Temperedness splits the extension of \eqref{e44'} by \eqref{e44''}, which is a \emph{constant} extension since it appears inside $\mathrm{IH}^1(\PP^1\setminus\{0\},\ch^{n-1}_v)$.
\end{rem}

Note that if $\sigma\in\Sigma\cap\cs$, the same computation (together with\footnote{Here we need not assume unipotent monodromies; see \cite{KL}.} Clemens-Schmid) exhibits $H^n(\cx_{\cs},\tx_{\sigma})$ as the direct sum of $\mathrm{Ph}^n_{\Sigma\cap\cs\setminus\{\sigma\}}$ with an extension of $\mathrm{IH}^1(\cs,\ch^{n-1})$ by $(\psi_{\sigma}\ch^{n-1}_v)^{T_{\sigma}}$.  When $\cs=\PP^1\setminus\{0\}$ this yields the
\begin{cor}\label{c44a}
The \textup{MHS} $\mathsf{A}_{\phi}^{\dagger}:=H^n(\cx\setminus\tx_0,\tx_{\infty})/\mathrm{Ph}_{\Sigma^*}$	 is isomorphic to $(\psi_{\infty}\mathcal{A}^{\dagger}_{\phi})^{T_{\infty}}$, hence computes $\lim_{\infty}\nu_{\phi}$.
\end{cor}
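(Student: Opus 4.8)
The plan is to obtain both descriptions of $\mathsf{A}^{\dagger}_{\phi}$ from the general computation carried out just above the statement, specialized to $\cs=\PP^{1}\setminus\{0\}$ and $\sigma=\infty$, and to reconcile them by applying the limit functor $\psi_{\infty}$ to the extension of Proposition~\ref{p44a}. First I would run the long exact sequence of the pair $(\cx_{\cs},\tx_{\sigma})$ in this case, so that $\cx_{\cs}=\cx\setminus\tx_{0}$, $\tx_{\sigma}=\tx_{\infty}$, and $\Sigma\cap\cs\setminus\{\sigma\}=\Sigma^{*}$. Feeding in the Decomposition Theorem formula \eqref{e44b} for $H^{n}(\cx_{\cs})$, together with Clemens--Schmid to control $H^{n}_{\tx_{\infty}}(\cx_{\cs})\cong H_{n}(\tx_{\infty})(-n)$ and the specialization map $H^{n}(\cx\setminus\tx_{0})\to H^{n}(\tx_{\infty})$, and arguing exactly as in the proof of Proposition~\ref{p44a} (the Gysin image of $H_{n}(\tx_{\Sigma^{*}})(-n)$ splits off the constant summand $\mathrm{Ph}^{n}_{\Sigma^{*}}$, and $\mathrm{IH}^{1}(\cs,\ch^{n-1}_{f})=0$ reduces $\mathrm{IH}^{1}(\cs,\ch^{n-1})$ to $\mathrm{IH}^{1}(\PP^{1}\setminus\{0\},\ch^{n-1}_{v})$), one finds that $H^{n}(\cx\setminus\tx_{0},\tx_{\infty})$ is $\mathrm{Ph}^{n}_{\Sigma^{*}}$ plus an extension $E$ of the constant MHS $\mathrm{IH}^{1}(\PP^{1}\setminus\{0\},\ch^{n-1}_{v})$ by $(\psi_{\infty}\ch^{n-1}_{v})^{T_{\infty}}$. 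Dividing out the phantom part gives $\mathsf{A}^{\dagger}_{\phi}\cong E$ as MHS.

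Next I would identify $E$ with $(\psi_{\infty}\mathcal{A}^{\dagger}_{\phi})^{T_{\infty}}$. By Proposition~\ref{p44a}, $\mathcal{A}^{\dagger}_{\phi}$ is an extension over $\cu$ of the \emph{constant} VMHS $\mathrm{IH}^{1}(\PP^{1}\setminus\{0\},\ch^{n-1}_{v})$ by $\ch^{n-1}_{v}$. Applying the exact functor $\psi_{\infty}$ and then $(-)^{T_{\infty}}$: the constant quotient is unchanged by $\psi_{\infty}$ and is $T_{\infty}$-fixed, so left-exactness of $(-)^{T_{\infty}}$ yields a left-exact sequence $0\to(\psi_{\infty}\ch^{n-1}_{v})^{T_{\infty}}\to(\psi_{\infty}\mathcal{A}^{\dagger}_{\phi})^{T_{\infty}}\to\mathrm{IH}^{1}(\PP^{1}\setminus\{0\},\ch^{n-1}_{v})$. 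The two middle terms receive compatible maps from $H^{n}(\cx\setminus\tx_{0},\tx_{t})$ ($t\in\cu$) via restriction and specialization, which lets me compare this sequence with the one defining $E$; checking that the right-hand arrow is actually surjective then forces $E\cong(\psi_{\infty}\mathcal{A}^{\dagger}_{\phi})^{T_{\infty}}$. I expect this right-exactness/compatibility step to be the main obstacle: one must rule out an extra monodromy-coinvariant ($H^{1}$ of $\langle T_{\infty}\rangle$) correction term, and this is precisely what the splitting of the phantom part off $\infty$ and the Clemens--Schmid sequence at $\infty$ supply.

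Finally, for the assertion that $\mathsf{A}^{\dagger}_{\phi}$ computes $\lim_{\infty}\nu_{\phi}$: recall from Proposition~\ref{p44a} (see also Remark~\ref{r44a}, \eqref{e44'}) that $\nu_{\phi}$ is recovered by viewing $\mathcal{A}^{\dagger}_{\phi}$ as an extension with top weight-graded quotient $\QQ(-n)$, and that by Definition~\ref{d42b} --- via the footnoted prescription $\imath(\widetilde{\lim_{\infty}\nu_{\phi}})=\nu_{\QQ}^{\infty}-\nu_{F}^{\infty}(0)$ --- the limit $\lim_{\infty}\nu_{\phi}$ is by construction nothing but the extension class of $\psi_{\infty}$ of this VMHS, restricted to $T_{\infty}$-invariants. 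Hence the isomorphism $\mathsf{A}^{\dagger}_{\phi}\cong(\psi_{\infty}\mathcal{A}^{\dagger}_{\phi})^{T_{\infty}}$ of Step~2 exhibits $\mathsf{A}^{\dagger}_{\phi}$ as the carrier of $\lim_{\infty}\nu_{\phi}$. The existence of this limit, i.e.\ $N_{\infty}\nu_{\QQ}^{\infty}=0$, is automatic once we know (again from Step~2) that $E$ is an extension in $\mathrm{MHS}$ by the \emph{full} invariant part $(\psi_{\infty}\ch^{n-1}_{v})^{T_{\infty}}=\ker(N_{\infty})$, which completes the argument.
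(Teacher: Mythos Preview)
Your proposal is correct and follows essentially the same route as the paper: the argument preceding the Corollary in the text is precisely your Step~1 (redo the computation of Proposition~\ref{p44a} with $\cs=\PP^1\setminus\{0\}$ and the singular fiber $\tx_{\infty}$, using Clemens--Schmid to replace $\ch^{n-1}_v$ by $(\psi_{\infty}\ch^{n-1}_v)^{T_{\infty}}$). The paper then treats the identification of the resulting extension with $(\psi_{\infty}\mathcal{A}^{\dagger}_{\phi})^{T_{\infty}}$ as immediate from the functoriality of ``the same computation'' in the fiber, whereas you spell this out in Step~2 by explicitly applying $\psi_{\infty}$ and $(-)^{T_{\infty}}$ to \eqref{e44b}--type data and checking compatibility via specialization; this extra care (in particular, flagging the right-exactness issue) is a reasonable elaboration of what the paper leaves implicit.
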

\noindent Dually, we may define
\begin{equation}\label{e44c}
\mathsf{A}_{\phi}:=H^n(\cx\setminus \tx_{\infty},\tx_0)/\mathrm{Ph}_{\Sigma^*} \cong(\mathsf{A}_{\phi}^{\dagger})^{\vee}(-n),
\end{equation}
which is itself obtained as the limit at $0$ (more precisely, $(\psi_0\mathcal{A}_{\phi})^{T_0}$) of
\begin{equation}\label{e44d}
\mathcal{A}_{\phi,t}:=H^n(\cx\setminus \tx_{\infty},\tx_t)	/\mathrm{Ph}_{\Sigma\setminus\{\infty\}}.
\end{equation}
\begin{example}\label{ex44a}
For the six $n=2$ Laurent polynomials in Example \ref{ex31a}, the (weak) Fano varieties\footnote{This is meant in the sense of Conjectures \ref{c32a} and \ref{c41a} only, although Conjecture \ref{c33a} \textit{bis} does in fact hold (modulo constant terms) for $\phi^{(i)}$ if $i=1,3,4,6$. (The nontempered examples were included for variety.) Note that the cases $i=4,5$ are weak Fano (not Fano) since they are resolutions of singular toric surfaces.} and VMHS Hodge-Deligne diagrams are:
\[\begin{tikzpicture}
\draw [gray] (-0.2,-0.3) -- (12.4,-0.3);
\draw [gray] (0.2,0.1) -- (0.2,-15);
\draw [gray] (1.7,0.1) -- (1.7,-15);
\draw [gray] (2.85,0.1) -- (2.85,-15);
\node [] at (0,0) {$i$};
\node [] at (1,-0.05) {\tiny tempered?};
\node [] at (1,-1.5) {Y};
\node [] at (2.3,-1.5) {$\PP^2$};
\node [] at (1,-4) {N};
\node [] at (2.3,-4) {$\mathbb{F}_1$};
\node [] at (1,-6.5) {Y};
\node [] at (2.3,-6.5) {$\mathrm{dP}_5$};
\node [] at (1,-9) {Y};
\node [] at (2.3,-9.1) {$\PP_{{}_{[1:2:3]}}$};
\node [] at (1,-11.5) {N};
\node [] at (2.3,-11.5) {$(\PP^2)^{\circ}$};
\node [] at (1,-14) {Y};
\node [align=center] at (2.2,-14) {$\mathrm{dP}_3$};
\node [] at (2.3,0) {$\mathrm{X}^{\circ}$};
\node [] at (0,-1.5) {$1$};
\node [] at (0,-4) {$2$};
\node [] at (0,-6.5) {$3$};
\node [] at (0,-9) {$4$};
\node [] at (0,-11.5) {$5$};
\node [] at (0,-14) {$6$};
\node [] at (4.2,0) {$\cv_{\phi}$};
\node [] at (5.4,0) {$\supseteq$};
\node [] at (6.6,0) {$\mathcal{A}_{\phi}^{\dagger}$};
\node [] at (7.8,0) {$\overset{\psi_{\infty}^{T_{\infty}}}{\longrightarrow}$};
\node [] at (9,0) {$\mathsf{A}_{\phi}^{\dagger}$};
\node [] at (10.2,0) {\small $\overset{(\psi_0^{T_0})^{\vee}(-n)}{\longleftarrow}$};
\node [] at (11.4,0) {$\mathcal{A}_{\phi}$};
\foreach \i in {0,...,5} {\foreach \j in {0,...,3} {\draw [thick,gray] (3.2+\j*2.4,-0.5-\i*2.5) -- (3.2+\j*2.4,-2.5-\i*2.5) -- (5.2+\j*2.4,-2.5-\i*2.5);}}
\foreach \j in {0,1,2} {\foreach \i in {0,...,5} {\filldraw [blue] (5+\j*2.4,-0.7-2.5*\i) circle (2pt);}}
\foreach \j in {0,1,3} {\foreach \i in {0,...,5} {\filldraw [blue] (3.2+\j*2.4,-1.6-2.5*\i) circle (2pt); \filldraw [blue] (4.1+\j*2.4,-2.5-2.5*\i) circle (2pt);}}
\foreach \i in {1,...,5} {\filldraw [blue] (4.1,-1.6-2.5*\i) circle (2pt);}
\node [blue] at (4.3,-6.6) {$4$};
\node [blue] at (4.3,-9.1) {$3$};
\node [blue] at (4.3,-11.6) {$6$};
\node [blue] at (4.3,-14.1) {$6$};
\filldraw [blue] (8,-1.6) circle (2pt);
\filldraw [blue] (8.9,-2.5) circle (2pt);
\filldraw [blue] (12.2,-1.6) circle (2pt);
\filldraw [blue] (11.3,-0.7) circle (2pt);
\filldraw [blue] (8.9,-4.1) circle (2pt);
\filldraw [blue] (11.3,-4.1) circle (2pt);
\filldraw [blue] (6.5,-4.1) circle (2pt);
\filldraw [blue] (12.2,-5.7) circle (2pt);
\filldraw [blue] (8,-7.5) circle (2pt);
\foreach \j in {1,2,3} {\filldraw [blue] (4.1+\j*2.4,-9.1) circle (2pt); \node [blue] at (4.3+\j*2.4,-9.1) {$3$};}
\filldraw [blue] (12.2,-8.2) circle (2pt);
\filldraw [blue] (8,-10) circle (2pt);
\foreach \j in {1,2,3} {\filldraw [blue] (4.1+\j*2.4,-11.6) circle (2pt); \node [blue] at (4.3+\j*2.4,-11.6) {$6$};}
\filldraw [blue] (12.2,-11.6) circle (2pt);
\filldraw [blue] (11.3,-10.7) circle (2pt);
\filldraw [blue] (8.9,-12.5) circle (2pt);
\filldraw [blue] (8,-11.6) circle (2pt);
\foreach \j in {0,1,2} {\foreach \i in {1,4} {\draw [red,<->] (4.3+\j*2.4,-1.4-\i*2.5) -- (4.8+\j*2.4,-0.9-\i*2.5);}}
\end{tikzpicture}\]
The red arrows in cases (2) and (5) denote nontorsion extensions of \eqref{e44'} by \eqref{e44''}, reflecting the nontemperedness.  (These extensions record, in $\mathrm{Ext}_{\text{MHS}}^1(\QQ(-2),\QQ(-1))\cong \CC/\QQ(-1)$, the logarithms of the toric boundary coordinates of the base locus $X_t\cap \DD_{\Delta}$.) In the other cases, the limit $\mathsf{A}^{\dagger}_{\phi}$ contains only torsion extensions.\footnote{In case (1) ($X_{\infty}$ smooth) this is by a computation in $K_2(X_{\infty})$; in (3) and (4) ($X_{\infty}$ singular), it is because $K_3^{\text{ind}}(\QQ)$ is torsion.  Later we will see how torsion extensions actually may lift to well-defined invariants in $\CC$.}  In \emph{all} cases, we have $\mathrm{rk}(\gr_F^k \cv_{\phi})=\dim (H^{k,k}(\xc))$ in accordance with Conjecture \ref{c41a}.  

More striking is the disparity in form between $\mathcal{A}_{\phi}^{\dagger}$ and $\mathcal{A}_{\phi}$.  While both share $\gr^W_1\cong \ch^1$ and $\gr^W_2\cong \IH^1(\PP^1,\ch^1)$, we have $\tfrac{W_4}{W_2}\mathcal{A}^{\dagger}_{\phi}\cong \QQ(-2)$ vs. $\tfrac{W_4}{W_2}\mathcal{A}_{\phi}\cong (\psi_{\infty}\ch^1)_{T_{\infty}}(-1)$.  Only in cases (3) and (4) does $\mathcal{A}_{\phi}$ yield a ``$K_2$-type'' normal function in $\mathrm{ANF}(\ch^1(2))$, which for (3) is due to an involution of $\cx$ over $t\mapsto -\tfrac{1}{t}$ \cite[$\S$5.2]{Ke17}.  However, we may regard (in (2), (4), and (5)) extensions of $\QQ(-1)\subseteq \IH^1(\PP^1,\ch^1)$ by $\ch^1$ as ``$K_0$-type'' normal functions, whose image in $\mathrm{ANF}(\ch^1(1))$ generate the Mordell-Weil group ($\otimes \QQ$) of $\pi$.  That their limits at $0$ capture part (or all, as in (2)) of $\lim_{\infty}\nu_{\phi}$ is essentially the fact that the limits in $X_0$ of Abel-Jacobi of differences of sections in $X_t\cap\DD_{\Delta}$ are given by ratios of toric coordinates on $\DD_{\Delta}$.
\end{example}
As mentioned in the Introduction, $\mathcal{A}_{\phi}^{\dagger}$ and $\mathcal{A}_{\phi}$ do not share the dual relationship \eqref{e44c} with their limits.  Indeed, as we have just seen, $\mathcal{A}_{\phi}$ is not even an HNF in a canonical way (unlike $\mathcal{A}_{\phi}^{\dagger}$).  However, for $n\geq 3$, it is precisely this lack of canonicity which makes $\mathcal{A}_{\phi}$ better adapted to exhibiting $\lim_{\infty}\nu_{\phi}$ in terms of limits of truncated HNFs.

\section{The Problem and some Fano threefold examples}\label{SIV}

In this section we state a precise but restricted version of the Arithmetic Mirror Symmetry Problem (see \S\ref{SIV2}), and then solve it when $\mathrm{X}^{\circ}$ is one of the Mukai Fano threefolds $V_{2N}$ ($5\leq N\leq 9$) \cite{Gv}.  For each of these, \cite{Fano} provides many LG-models of the form in Definition \ref{d31a} -- corresponding to the many possible toric degenerations $\mathbb{P}_{\Delta^{\circ}}$ of $\mathrm{X}^{\circ}$ -- satisfying Conjectures \ref{c32a} and \ref{c33a} \textit{bis}.  They are found by taking $\phi$ to be (up to an additive constant) the Minkowski polynomial \cite{CCGGK} for the corresponding (reflexive) $\Delta$, which is tempered in view of \cite[Prop. 2.4]{dS2}.

Our job is then to exhibit $\mathcal{A}_{\phi,t}$ as a geometric HNF in the sense of \eqref{e42c}, and the Ap\'ery constant $\alpha_{\mathrm{X}^{\circ}}$ as the limit at $t=0$ of the corresponding THNF, canonically normalized as described in \S\ref{SIV1}.  In contrast to $\mathcal{A}^{\dagger}_{\phi,t}$, this cannot arise from the lift of the coordinate symbol $\{x_1,x_2,x_3\}$ to $\mathrm{CH}^3(\tilde{X}_t,3)$, since that HNF is singular at $0$.  Rather, we are looking for an extension 
\begin{equation}\label{e50a}
0\to \ch^2_v(p)\to\mathcal{A}_{\phi,t}\to \QQ(0)\to 0
\end{equation}
arising from $$\mathcal{Z}\mapsto\nu_{\mathcal{Z}}\colon \mathrm{CH}^p(\cx\setminus \tilde{X}_{\infty},r-1)\to \mathrm{ANF}(\ch^{2p-r}_v(p))$$ with $2p-r=2$, which forces $(p,r)=(3,3)$ ($\mathcal{Z}_t$ belongs to the $K^{\text{alg}}_3$ of the $K3$ fibers $\tilde{X}_t$) or $(2,1)$ ($\mathcal{Z}_t$ lies in $K_1$ of the fibers).\footnote{Taking $p>3$ yields $F^{-1}\ch^2_v(p)=\{0\}$, making the extension class of \eqref{e50a} horizontal (by transversality) with rational monodromy (images under $T_{\sigma}-I$), hence trivial (since monodromy acts irreducibly on $\ch^2_v$).} It is these cycles $\mathcal{Z}$ which (in \S\S\ref{SIVA}-\ref{SIVC}) we will show how to construct in each case.

\subsection{The inhomogeneous equation of a normal function}\label{SIV1}

Given $\nu\in \mathrm{ANF}(\ch^{n-1}_v(p))$, let $\tilde{\nu}:=\nu_{\QQ}-\nu_F$ be a multivalued holomorphic lift to $\ch^{n-1}_v$.  (Here $v$ can be a higher or classical normal function, i.e. $p\geq \tfrac{n}{2}$.) We may generalize Definition \ref{d43a} and \eqref{e43e} by setting $$V(t):=\langle \tilde{\nu}(t),[\omega_t]\rangle$$ and $g(t):=LV(t)\in\CC(t)$, which is zero iff $\nu$ is torsion \cite{dAMS}. 
(Note that since $\langle F^1,\omega\rangle=0$ and $L\langle \HH^{n-1}_v,\omega\rangle=0$, $g$ is independent of the choices of $\nu_{\QQ}$ and $\nu_F$.)  In a special case, in which $\nu$ is singular at $0$, we have a formula for $g(t)$ (Remark \ref{r43b}).

The next result summarizes what we can say more generally about this inhomogeneous term.  It is motivated as follows.  Suppose $\nu$ is nonsingular at $0$ (Definition \ref{d42b}), so that the truncated NF has a power-series expanion $V(t)=\sum_{k\geq 0}v_k t^k$ there.   If one knows $L$ and can bound the degree of $g$ (by some $m$), then we only need $\{v_k\}_{k=0}^m$ to compute $g$.

For its statement, we shall assume only that:
\begin{itemize}[leftmargin=0.5cm]
\item $\{\tilde{X}_t\}$ is a family of CY $(n{-}1)$-folds over $\PP^1$ (smooth off $\Sigma$);
\item $\{\omega_t\}$ is a section of $\ch^{n-1,0}_{v,e}\cong \co_{\PP^1}(h)$, with divisor $h[\infty]$;
\item $L=\sum_{j=0}^d t^j P_j(\delta_t)\in \CC[t,\delta_t]$ is its PF operator, of degree $d$; and
\item $\HH_v^{n-1}$ has maximal unipotent monodromy at $0$.
\end{itemize}
This is somewhat more general than the setting of the rest of this paper, which takes $\{\tilde{X}_t\}$ to arise from the level sets of a Laurent polynomial; in this case we have $h=1$ (see \cite[Ex. 4.5]{Ke20}), and frequently only nodal singularities on the $\{\tilde{X}_{\sigma}\}_{\sigma \in \Sigma^*}$.

\begin{thm}\label{t51}
Assume $\nu$ is nonsingular away from $0$ and $\infty$.  Then $g$ is a polynomial of degree $\leq d-h$.  If $\nu$ is also nonsingular at $0$, then $t\mid g$.  If $\nu$ is also nonsingular at $\infty$ and $T_{\infty}$ is unipotent, then $\deg(g)\leq d-h-1$.
\end{thm}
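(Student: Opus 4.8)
The plan is to prove all three assertions through a pole‑by‑pole analysis of the rational function $g$ over $\Sigma$. We already know from \cite{dAMS} (or directly: $V$ is a multivalued holomorphic function on $\cu$ of moderate growth at each point of $\Sigma$, and $L=\sum_{j=0}^{d}t^{j}P_{j}(\delta_{t})$ has polynomial coefficients and commutes with local monodromy, while the monodromy of $\tilde\nu$ alters $V$ only by the pairing of $\omega$ with a flat local section of $\ch^{n-1}_{v}$, hence by a local solution of $L(\cdot)=0$) that $g\in\CC(t)$, with poles contained in $\Sigma$. Two structural facts will be used repeatedly: (a) $L$, a polynomial in $t$ and $\delta_{t}=t\partial_{t}$, preserves moderate growth at any point and carries holomorphic functions to holomorphic functions; and (b) if $\nu$ is nonsingular at $\sigma$, then -- after trivializing $T_{\sigma}^{\mathrm{ss}}$ by a finite base change, which affects none of the conclusions about $g$ -- the lift $\tilde\nu=\nu_{\QQ}^{\sigma}-\nu_{F}^{\sigma}$ extends to a single-valued holomorphic section of $\ch^{n-1}_{v,e}$ on $D_{\sigma}$, so that $V=\langle\tilde\nu,\omega\rangle$ is holomorphic at $\sigma$.

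\emph{Polynomiality and the bound $\deg g\le d-h$.} By (b), nonsingularity of $\nu$ at each $\sigma\in\Sigma\setminus\{0,\infty\}$ makes $V$, hence $g=LV$ by (a), holomorphic there; so $g$ has no pole off $\{0,\infty\}$. At $0$, admissibility gives $\tilde\nu$ of moderate growth and $\omega$ nonvanishing near $0$, so $V$ has moderate growth, hence so does $g$ by (a); but a rational function of moderate growth at $0$ is regular there. Thus $g$ is regular on $\aff^{1}$, i.e.\ a polynomial. For the degree, pass to $u=t^{-1}$ at $\infty$, where $\delta_{t}=-\delta_{u}$ and $\tilde L_{u}:=u^{d}L=\sum_{k=0}^{d}u^{k}P_{d-k}(-\delta_{u})$ has holomorphic coefficients, while $\ch^{n-1,0}_{v,e}\cong\co_{\PP^{1}}(h)$ with divisor $h[\infty]$ gives $\omega=u^{h}e_{\infty}$ for a local generator $e_{\infty}$ of $\ch^{n-1,0}_{v,e}$ near $\infty$. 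Since $\tilde\nu$ has moderate growth at $\infty$, $V=u^{h}\langle\tilde\nu,e_{\infty}\rangle=O(u^{h-\varepsilon})$ for every $\varepsilon>0$; as $\tilde L_{u}$ involves only nonnegative powers of $u$, $\tilde L_{u}V=O(u^{h-\varepsilon})$, whence $g=u^{-d}\tilde L_{u}V=O(u^{h-d-\varepsilon})$, which for a polynomial forces $\deg g\le d-h$.

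\emph{The two refinements.} If $\nu$ is also nonsingular at $0$, then, $T_{0}$ being (maximally) unipotent, (b) gives $V$ holomorphic at $0$; all terms $t^{j}P_{j}(\delta_{t})V$ with $j\ge1$ and all $\delta_{t}^{k}V$ with $k\ge1$ vanish at $t=0$, so $g(0)=P_{0}(0)\,V(0)$, and maximal unipotency produces a holomorphic solution $A(t)=\sum_{k}a_{k}t^{k}$ of $L(\cdot)=0$ with $a_{0}\ne0$, for which evaluating $LA=0$ at $0$ gives $P_{0}(0)a_{0}=0$; hence $P_{0}(0)=0$ and $t\mid g$. If moreover $\nu$ is nonsingular at $\infty$ with $T_{\infty}$ unipotent, then $\tilde\nu$ extends to a \emph{holomorphic}, logarithm-free section of $\ch^{n-1}_{v,e}$ on $D_{\infty}$, so $V=u^{h}w(u)$ with $w=\sum_{k\ge0}w_{k}u^{k}$ holomorphic. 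The indicial polynomial of $L$ at $\infty$ is $\rho\mapsto P_{d}(-\rho)$ (the symbol of the $u^{0}$-part of $\tilde L_{u}$), and its smallest root is $h$: every solution of $L(\cdot)=0$ is of the form $\langle(\text{flat section}),\omega\rangle=u^{h}\cdot O(u^{-\varepsilon})$, so has local exponent $\ge h$ at $\infty$, with $h$ attained. Hence $P_{d}(-h)=0$, so $P_{d}(-\delta_{u})(u^{h}w)=\sum_{k\ge0}w_{k}P_{d}(-(h+k))u^{h+k}=O(u^{h+1})$ and $u^{k}P_{d-k}(-\delta_{u})(u^{h}w)=O(u^{h+1})$ for $k\ge1$; therefore $\tilde L_{u}V=O(u^{h+1})$ and $g=u^{-d}\tilde L_{u}V=O(u^{h+1-d})$, so $\deg g\le d-h-1$.

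The step I expect to be the main obstacle is the identification of $h$ as the smallest local exponent of $L$ at $\infty$ (equivalently $P_{d}(-h)=0$): this is precisely what annihilates the would-be leading coefficient of $g$ at $\infty$ and upgrades the bound by one, and it reflects the interplay between the degree of the Hodge line bundle $\ch^{n-1,0}_{v,e}$ and the local monodromy $T_{\infty}$. A secondary technical point is making precise, at $\sigma\in\Sigma\setminus\{0,\infty\}$ (and at $\infty$ for the first assertion), the passage from quasi-unipotent to unipotent local monodromy by a finite base change, together with the fact that the polarization extends to the canonical extension as a nondegenerate pairing -- both standard, but needed for the implication ``$\nu$ nonsingular $\Rightarrow$ $V$ holomorphic (resp.\ of moderate growth)'' invoked above.
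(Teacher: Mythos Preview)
Your proof is correct, and the overall architecture (local analysis at each $\sigma\in\Sigma$, holomorphicity of $V$ at nonsingular points, maximal unipotency forcing $P_0(0)=0$) matches the paper's.  However, your argument for the refined bound at $\infty$ is genuinely different and worth noting.

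The paper does not use the indicial polynomial at $\infty$.  Instead, it replaces $\tilde\nu$ by $\hat\nu:=\mathsf{e}_{\CC}-\nu_F$ (which alters $V$ by a homogeneous solution and leaves $g$ unchanged) and expands via Leibniz:
\[
g \;=\; L\langle\hat\nu,\omega\rangle \;=\; \sum_{k\ge 1}\sum_{j=1}^{k} q_k^{\sigma}(u)\,\langle\nabla_{\delta_u}^{\,j}\hat\nu,\;\nabla_{\delta_u}^{\,k-j}\omega\rangle,
\]
the $j=0$ terms vanishing because $\nabla_{L}\omega=0$.  One then reads off the order of vanishing of each factor: $\nabla_{\delta_u}^{k-j}\omega\in u^{h}\ch_e$, while for $j\ge 1$ the admissibility form of $\nu_F$ gives $\nabla_{\delta_u}^{\,j}\hat\nu\in u\,\ch_e^{\mathrm{un}}\oplus\ch_e^{\mathrm{non}}$ (nonsingular case) or $\ch_e$ (singular case).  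The extra factor of $u$ from the unipotent part, together with the fact that the pairing $\ch_e\times\ch_e\to\co$ is holomorphic precisely when $T_\infty$ is unipotent, yields the improvement to $u^{h+1}$.  This has the advantage of treating the singular and nonsingular cases, and the basic and refined bounds, by a single mechanism.

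Your route is more elementary: once $T_\infty$ is unipotent and $\nu$ nonsingular, $V=u^{h}w(u)$ with $w$ holomorphic, and you kill the would-be $u^{h}$ term of $\tilde L_u V$ by showing $P_d(-h)=0$.  Your justification --- that every period $\langle\varphi,\omega\rangle$ is $u^{h}$ times a log--holomorphic function, with $h$ attained (by nondegeneracy of the pairing on $\ch_e$ at $u=0$), so $h$ is the least exponent --- is correct.  This avoids the Leibniz bookkeeping entirely but does require the separate moderate-growth argument to handle the possibly singular behaviour of $\nu$ at $0$ and $\infty$ for the first assertion.  For that step, your claim that ``rational of moderate growth $\Rightarrow$ regular'' is fine, but note that the paper's explicit description of $\nu_F\in\Gamma(D_\sigma,\cv_e)$ is what makes the moderate growth of $\tilde\nu$ rigorous; you should cite that (or the equivalent fact that admissible normal functions have lifts in the canonical extension) rather than leave it implicit.

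The ``secondary technical point'' you flag --- passing to a finite cover to make $T_\sigma$ unipotent at $\sigma\in\Sigma^*$ --- is handled in the paper without base change, by using that $\tilde\nu$ (being $T_\sigma$-invariant) lies in the \emph{dual} canonical extension $\ch^e$ and that the polarization extends to $\ch^e\times\ch_e\to\co$.  Either device works.
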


\begin{proof}
Let $u$ be a local coordinate on a disk $D_{\sigma}$ about $\sigma\in\Sigma$, and $\ch_e$ resp. $\ch^e$ the canonical resp. dual-canonical extensions of $\ch^{n-1}_v |_{D_{\sigma}^*}$ to $D_{\sigma}$.  (That is, the eigenvalues of $\nabla_{\delta_u}$ are in $(-1,0]$ resp. $[0,1)$.)  Assuming $\nu$ is nonsingular at $\sigma$, we may choose $\nu_{\QQ}$ so that $N_{\sigma}\nu_{\QQ}=0$; thus $\tilde{\nu}$ is $T_{\sigma}$-invariant, and extends to a section of $\ch^e$.  Since $\omega$ is a section of $\ch_e$, and $\langle \,,\,\rangle$ extends to $\ch^e\times\ch_e\to \co$, $V=\langle \tilde{\nu},\omega\rangle $ extends to a holomorphic function on $D_{\sigma}$.  For $\sigma\in \Sigma^*$, we have $L\in \CC[u,\partial_u]$ hence $g|_{D_{\sigma}}$ holomorphic.  At $\sigma=0$, maximal unipotency forces the indicial polynomial $P_0(T)$ to be divisible by $T$, so that $L$ sends $\co(D_0)\to t\co(D_0)$ and $g(0)=0$.  If $\sigma=\infty$ and $u=t^{-1}$, our assumption that $(\omega)=h[\infty]$ gives $V|_{D_{\infty}}\in u^h \co(D_{\infty})$; applying $L=\sum_{j=0}^d u^{-j} P_j(-\delta_u)$ yields $g|_{D_{\infty}}\in u^{h-d}\co(D_{\infty})$.

We can refine the result at $\infty$, and deal with singularities at $0$ and $\infty$, by writing $\tilde{\nu}$ and $\omega$ locally in terms of bases of the canonical extension.  With $\sigma,u$ as above, $\HH_{v,\CC}^{n-1}=\HH_{\CC}^{\text{un}}\oplus \HH_{\CC}^{\text{non}}$ decomposes into unipotent ($T_{\sigma}^{\text{ss}}$-invariant) and nonunipotent parts, with (multivalued) bases $\{\mathsf{e}_i\}$ and $\{\mathsf{e}_j^*\}$, the latter chosen so that $T^{\text{ss}}_{\sigma}\mathsf{e}_j^*=\zeta_k^{a_j}\mathsf{e}_j^*$ ($\zeta_k:=e^{\frac{2\pi\ay}{k}}$).  Writing $\ell(u):=\tfrac{\log(u)}{2\pi\ay}$, a basis of $\ch_e=\ch_e^{\text{un}}\oplus \ch_e^{\text{non}}$ is given by $\tilde{\mathsf{e}}_i:=e^{-\ell(u)N_{\sigma}}\mathsf{e}_i$ and $\tilde{\mathsf{e}}_j^*:=e^{-\ell(u)N_{\sigma}}u^{-\frac{a_i}{k}}\mathsf{e}_j^*$, which have the property that $\nabla_{\delta_u}\tilde{\mathsf{e}}_i\,,\nabla_{\delta_u}\tilde{\mathsf{e}}_j^*\in \ch_e$.  Admissibility says that the Hodge lift takes the form $$\nu_F(u)=u\sum_i f_i(u)\tilde{\mathsf{e}}_i + u\sum_j f_j^*(u)\tilde{\mathsf{e}}_j^* + \tilde{\mathsf{e}}_{\CC}\in \Gamma (D_{\sigma},\mathcal{V}^e),$$ where $\mathsf{e}_{\CC}$ is a $\CC$-lift of $1$ to $\VV_{\CC}^{\text{un}}$ and $\tilde{\mathsf{e}}_{\CC}:=e^{-\ell(u)N_{\sigma}}\mathsf{e}_{\CC}$.  If $\nu$ is nonsingular at $\sigma$, then $\tilde{\mathsf{e}}_{\CC}=\mathsf{e}_{\CC}$ and $\nabla_{\delta_u}\tilde{\mathsf{e}}_{\CC}=0$; if it is singular at $\sigma$, then we may assume $\tilde{\mathsf{e}}_{\CC}=\mathsf{e}_{\CC}+\ell(u)\tilde{\mathsf{e}}_1$, so that $\nabla_{\delta_u}\tilde{\mathsf{e}}_{\CC}\in \ch_e^{\text{un}}$.  Write $\mathrm{ord}_{\sigma}(\omega)=:o$ (this is $h$ if $\sigma=\infty$ and $0$ if $\sigma=0$).

Replacing $\tilde{\nu}$ by $\hat{\nu}:=\mathsf{e}_{\CC}-\nu_F$ changes it by a $\CC$-period hence does not affect $g$.  Writing $L=\sum_{k\geq 0}q_k^{\sigma}(u)\delta_u^k$, we have $$g=L\langle \hat{\nu},\omega\rangle=\sum_{k\geq 1}\sum_{j=1}^k q_k^{\sigma}(u)\langle \nabla_{\delta_u}^{j}\hat{\nu},\nabla_{\delta_u}^{k-j}\omega \rangle$$ since $\nabla_L\omega=0$.  Clearly $\nabla^{k-j}_{\delta_u}\omega\in u^o \ch_e$, while $\nabla^{j}_{\delta_u}\hat{\nu}\in u\ch_e=u\ch^e_{\text{un}}\oplus \ch^e_{\text{non}}$ resp. $\ch^{\text{un}}_e\oplus \ch_e^{\text{non}}=\ch^e$ for $\nu$ nonsingular resp. singular at $\sigma$.  Hence $\langle \nabla_{\delta_u}^{j}\hat{\nu},\nabla_{\delta_u}^{k-j}\omega\rangle$ belongs to $u^{o+1}\co(D_{\sigma})$ if $\nu$ is nonsingular \emph{and} $T_{\sigma}$ is unipotent, and otherwise to $u^o\co(D_{\sigma})$.  For $\sigma=\infty$, multiplying by $q_k^{\sigma}(u)$ introduces $u^{-d}$.  The result follows.
\end{proof}

\begin{rem}\label{r51a}
Different choices of $\nu_{\QQ}$ yield branches of $V$ that differ by $\QQ(p)$-periods $(2\pi\ay)^p\int_{\varphi_t}\omega_t$, $\varphi_t\in H_{n-1}(\tilde{X}_t,\QQ)$.  If the $\{T_{\sigma}-I\}_{\sigma \in \Sigma^*}$ have rank one, and there are $d$ of them (i.e. $\ch_v^{n-1}$ has no ``removable singularities''), and one $\sigma_0\in \Sigma^*$ has greater modulus than the others, then we say that $\cx$ is of \emph{normal conifold type}.  In this case $V$ \textbf{can be chosen uniquely} by maximizing its radius of convergence; that is, there is a unique branch which is single-valued on the complement of the interval $[\sigma_0,\infty]$.
\end{rem}

\subsection{The arithmetic mirror symmetry problem}\label{SIV2}

Rather than reiterating the general but vague version from the Introduction, we give a more precise variant in a restricted setting.  Assume that our Fano variety and LG-model satisfy the following:

\begin{itemize}[leftmargin=0.5cm]
\item $H^*_{\text{prim}}(\mathrm{X}^{\circ})$ and $(\psi_0\ch^{n-1}_v)^{T_0}$ are Hodge-Tate of rank $r_0$, with isomorphic associated gradeds (as predicted by Conjecture \ref{c41a});
\item $H^n_{\text{prim}}(\mathrm{X}^{\circ})=\{0\}$ (if $n$ is even), and $\rho(\mathrm{X}^{\circ})=1$;
\item $\cx$ is of normal conifold type (Remark \ref{r51a}), and satisfies Conjecture \ref{c33a} \textit{bis};
\item $\phi$ (and thus $\cx$, and $L$) is defined over $\bar{\QQ}$;
\item $d=r_0+1$; and
\item $P_0(d-1)\neq 0$.
\end{itemize}

\noindent Referring to \S\ref{SIIC}, we write $b_j:=u^{(d-1)}_j$ and $B(t):=\sum_{j\geq d-1} b_j t^j$, so that $\alpha_{\mathrm{X}^{\circ}}^{(d-1)}=\lim_{j\to \infty}\tfrac{b_j}{a_j}$ and $LB=P_0(d-1)t^{d-1}$.

\begin{prob}\label{c52}
Exhibit the Ap\'ery constants $\{\alpha_{\mathrm{X}^{\circ}}^{(i)}\}_{i=1}^{d-1}$ as periods, by showing that:

\textup{(a)} The first $d-2$ constants $\{\alpha_{\mathrm{X}^{\circ}}^{(i)}\}_{i=1}^{d-2}$ are \textup{(}up to $\bar{\QQ}^*$-multiples\textup{)} extension classes in $(\psi_0\ch_v^{n-1})^{T_0}$ which are torsion \textup{(}i.e. powers of $2\pi\ay$\textup{)} if $\phi$ is tempered.

\textup{(b)} There is \textup{(}up to scale\textup{)} a unique HNF $\nu\in\mathrm{ANF}(\ch^{n-1}_v(p))\setminus\{0\}$ singular only at $t=\infty$, for some \textup{(}unique\textup{)} $p\in [\tfrac{n+1}{2},n]\cap \ZZ$.  This HNF is motivic, i.e. arises from some $\mathcal{Z}\in\mathrm{CH}^p(\cx\setminus \tilde{X}_{\infty},2p-n)_{\QQ}$\textup{;} and $LV=-\mathfrak{k}t^{d-1}$ for some\footnote{More precisely, $\mathfrak{k}$ should belong to the common field of definition of $\cx$ and $\mathcal{Z}$.} $\mathfrak{k}\in \bar{\QQ}^*$.

\textup{(c)} Normalize $V$ uniquely as in Remark \ref{r51a}, and set $\hat{V}(t):=\tfrac{P_0(d-1)}{\mathfrak{k}}V(t)$.  Then $\alpha_{\mathrm{X}^{\circ}}^{(d-1)}=\hat{V}(0)+\sum_{i=1}^{d-2}\beta_i \alpha_{\mathrm{X}^{\circ}}^{(i)}$, where $\beta_i\in \bar{\QQ}\langle \hat{V}(0),\hat{V}'(0),\ldots,\hat{V}^{(i)}(0)\rangle$.
\end{prob}

\begin{rem}\label{r52a}
Stated in this way, the thrust of the Arithmetic Mirror Symmetry Problem is somewhat obscured.  What it really proposes is that given a Fano $\mathrm{X}^{\circ}$ with $H_{\text{prim}}^*$ as above, of rank one less than the degree of its quantum differential equation, there \emph{exists} an LG-model $\cx$ (and cycle $\mathcal{Z}$) satisfying the remaining hypotheses together with the content of \ref{c52}(a)-(c).
\end{rem}

In the three subsections that follow, we solve this Problem in several cases with $n=3=r$ and $d=2$.  The situation simplifies, since $P_0(1)=1$ and there is only one Ap\'ery constant $\alpha_{\mathrm{X}^{\circ}}$; moreover, Theorem \ref{t51} guarantees that $LV=-\mathfrak{k}t$ for some $\mathfrak{k}\in\CC^*$ once we have a HNF of the type described.  So it remains to produce $\mathcal{Z}$ (hence $\nu$), and show $\mathfrak{k}\in\bar{\QQ}^*$ and $\alpha_{\mathrm{X}^{\circ}}=\hat{V}(0)$ in each case; we defer the uniqueness to \S\ref{SV}.

\begin{rem}\label{r52b}
A general result (for $n=r$ and $d=2$) encompassing these cases appeared in \cite[Thm. 10.11]{Ke20}, making essential use of Theorem \ref{t51} above.  It reduces the Arithmetic Mirror Symmetry Problem to establishing the existence of a ``good'' LG-model and checking the Beilinson-Hodge Conjecture; once the cycle is found, the equality $\alpha_{\mathrm{X}^{\circ}}=\hat{V}(0)$ is automatic.  But we shall explicitly compute $\hat{V}(0)$ below in each case anyway, both to provide explicit (and instructive) solutions to the Problem, and to check that $\mathfrak{k}\neq 0$ and the cycle indeed produces a nontrivial HNF.  Moreover, with [loc. cit.] in hand, one may regard these computations of $\hat{V}(0)$ as illustrations of a regulator calculus which is available for calculating the Ap\'ery constant when modular and other methods (such as in \cite{Gv}) are unavailable.
\end{rem}

\subsection{$K_1$ of a $K3$: the $V_{10}$ HNF}\label{SIVA}

The irrational Fano threefold $V_{10}=G(2,5)\cap \mathsf{Q}\cap \mathsf{P}_1\cap \mathsf{P}_2$ (quadric and linear sections of the Pl\"ucker embedding) has a mirror LG model with discriminant locus $\Sigma=\{0,\sigma_+,\sigma_-,\infty\}$ (where $\sigma_{\pm}=\tfrac{-11\pm 5\sqrt{5}}{4}$), given by the Laurent polynomial
\begin{equation}\label{e53a}
\phi(\ux)=\frac{(1-x_3)(1-x_1-x_3)(1-x_2-x_3)(1-x_1-x_2-x_3)}{-x_1x_2x_3}.	
\end{equation}
Namely, compactifying $\{1=t\phi(\ux)\}$ in $\PP_{\Delta}$ yields a family $\{X_t\}_{t\in\PP^1}$, whose fibers over $\PP^1\setminus\Sigma$ are singular $K3$s with one $A_3$ and six $A_1$ singularities; these are resolved (to Picard-rank $19$ $K3$s) 
\noindent\begin{minipage}{0.03\textwidth}
\end{minipage}
\begin{minipage}{0.20\textwidth}
\begin{tikzpicture}
\draw[black,ultra thick,fill=cyan] (-1.6,-1.7) -- (0.4,-1.7) -- (0.4,0.3) -- (-1.6,2.3) -- (-1.6,-1.7);
\draw[black,ultra thick,fill=cyan] (0.4,-1.7) -- (1,-1) -- (1,0) -- (0.4,0.3) -- (0.4,-1.7);
\draw[black,ultra thick,fill=cyan] (0.4,0.3) -- (1,0) -- (0.6,0.7) -- (-0.4,1.7) -- (-1.6,2.3) -- (0.4,0.3);
\draw[black,ultra thick,dotted] (-0.4,1.7) -- (-0.4,-0.3) -- (0.6,-0.3) -- (0.6,0.7);
\draw[black,ultra thick,dotted] (0.6,-0.3) -- (1,-1);
\draw[black,ultra thick,dotted] (-0.4,-0.3) -- (-1.6,-1.7);
\draw[red,decorate,decoration={snake,amplitude=0.5pt}] (0.1,-0.3) -- (-0.7,-1.9);
\draw[red,decorate,decoration={snake,amplitude=0.5pt}] (-1.3,-1) -- (0.7,-1.3);
\draw[red,decorate,decoration={snake,amplitude=0.5pt}] (-1.1,-0.8) -- (-0.5,-1.9);
\filldraw[red] (-0.6,-1.7) circle (2pt);
\filldraw[red] (-1,-1) circle (2pt);
\filldraw[red] (-0.3,-1.1) circle (2pt);
\draw[->, gray,thick] (-1.2,0) -- (1.2,0);
\draw[->, gray,thick] (0,-1.2) -- (0,2);
\draw[->, gray,thick] (-0.75,-0.87) -- (0.75,0.87);
\end{tikzpicture}
\end{minipage}
\begin{minipage}{0.77\textwidth}	
by $\beta\colon \tilde{X}_t\to X_t$.  The Newton polytope $\Delta$, together with a portion $$X_t \cap \{x_3=0\}=\{x_1=1\}\cup\{x_2=1\}\cup\{x_1+x_2=1\}=:\cC_1\cup \cC_2\cup \cC_3$$
of the base locus (red), are displayed in the figure. The PF operator and period sequence  are given by $$L=\delta_t^3 - 2t(2\delta_t+1)(11\delta_t^2+11\delta_t + 3) - 4t^2(\delta_t+1)(2\delta_t+3)(2\delta_t+1),$$ $$a_k:=[\phi^k]_{\uo}=\sum_{i=0}^k\sum_{j=0}^{k-i}\frac{k! 2k!}{i!^2 j!^2 (k-i)!(k-j)!(k-i-j)!}=1,\,6,\,114,\,\ldots;$$
\end{minipage} 

\noindent while the monodromy operators $T_0, T_{\pm}, T_{\infty}$ have Jordan forms $J(3),(\mathbf{-1})\oplus \mathbf{1}^2, (\mathbf{-1})^2\oplus \mathbf{1}$ and LMHS types
\[\begin{tikzpicture}[scale=0.8]
\node[black] at (1.2,-0.5) {$\sigma=0$};
\node[black] at (6.2,-0.5) {$\sigma=\sigma_{\pm}$};
\node[black] at (11.2,-0.5) {$\sigma=\infty$};
\draw[gray,thick] (0,2.5) -- (0,0) -- (2.5,0);
\draw[gray,thick] (5,2.5) -- (5,0) -- (7.5,0);
\draw[gray,thick] (10,2.5) -- (10,0) -- (12.5,0);
\filldraw[black] (0,0) circle (3pt);
\filldraw[black] (1,1) circle (3pt);
\filldraw[black] (2,2) circle (3pt);
\filldraw[black] (5,2) circle (3pt);
\filldraw[black] (6,1) circle (3pt);
\filldraw[black] (7,0) circle (3pt);
\filldraw[black] (10,2) circle (3pt);
\filldraw[black] (11,1) circle (3pt);
\filldraw[black] (12,0) circle (3pt);
\draw[->,thick,blue] (1.8,1.8) -- (1.2,1.2);
\draw[->,thick,blue] (0.8,0.8) -- (0.2,0.2);
\node [blue] at (1.8,1.2) {$N_0$};
\draw[green,thick] (0,0) circle (6pt);
\draw[green,thick] (5,2) circle (6pt);
\draw[green,thick] (7,0) circle (6pt);                                                         
\draw[green,thick] (11,1) circle (6pt);
\draw[->,thick,red] (6.3,1) arc (-90:180:0.3cm);
\node [red] at (6.3,1.3) {$-$};
\node [red] at (7.1,1.2) {$T^{\text{ss}}_{\pm}$};
\draw[->,thick,red] (10.3,2) arc (-90:180:0.3cm);
\node [red] at (10.3,2.3) {$-$};
\node [red] at (11.1,2.2) {$T^{\text{ss}}_{\infty}$};
\draw[->,thick,red] (12.3,0) arc (-90:180:0.3cm);
\node [red] at (12.3,0.3) {$-$};
\node [red] at (13.1,0.2) {$T^{\text{ss}}_{\infty}$};
\end{tikzpicture}\]
where the $T_{\sigma}$-invariant classes are circled.
The Ap\'ery constant is $\alpha=\tfrac{1}{10}\zeta(2)$ \cite{Gv}.

To construct the cycle $\mathcal{Z}\in \mathrm{CH}^2(\cx\setminus \tilde{X}_{\infty},1)$ we shall make use of the rational curves $\{\cC_i\}$.  On $X_t$, a higher Chow cycle is given by $(\cC_1,g_1:=\tfrac{x_2}{x_2 -1})+(\cC_2,g_2:=\tfrac{x_1 -1}{x_1})+(\cC_3,g_3:=\tfrac{x_1}{x_1-1}),$ since the sums of divisors cancel on $X_t$.  To lift this to a cycle $\mathcal{Z}_t$ on $\tilde{X}_t$ (say, for $t\notin \Sigma$), one adds two more terms $(D_1,f_1)+(D_2,f_2)$ supported on the exceptional divisors over the nodes of $X_t$ at $\cC_1\cap \cC_3$ and $\cC_2\cap \cC_3$.  These $\{\mathcal{Z}_t\}$ are the restrictions of an obvious precycle $\mathcal{Z}$ on $\cx$, whose boundary fails to vanish only on $\tilde{X}_{\infty}$.\footnote{The successive blowups along the components of the base locus occurring in the construction of $\cx$ generate additional exceptional curves on $\tilde{X}_{\infty}$ which disconnect the $5$-gon $D_1 \cup D_2 \cup \cC_1 \cup \cC_2 \cup \cC_3$, and it is on these that $\partial \mathcal{Z}$ is supported.}

The next step is to find a family of closed $2$-currents $R_t$ on $\tilde{X}_t$ representing the class $\nu_{\mathcal{Z}}(t)\in J(H^2(\tilde{X}_t)(1))$, or more precisely a lift to $H^2(\tilde{X}_t,\CC)$
which is single-valued on $D_{|\sigma_-|}$.  Writing $\mu := \{(x_1,x_2)\in \RR^2 \mid 0\leq x_2 \leq 1,\;1-x_2\leq x_1\leq 1\}$, for $|t|\ll 1$ let $\Gamma_t$ denote the branch of $\{(x_1,x_2,x_3)\in \tilde{X}_t\mid (x_1,x_2)\in \mu\}$ with $x_3$ small. Then we have\footnote{In the context of regulator currents, $\log(-)$  means the single-valued branch with discontinuity along $\RR_-$.} $R_t = (2\pi \ay)^2 \delta_{\Gamma_t}+ 2\pi\ay\sum_{i=1}^3 \log(g_i)\delta_{\cC_i} + 2\pi\ay\sum_{i=1}^2 \log(f_i)\delta_{D_i}$, which yields the THNF
\begin{align*}
V(t) = \langle [R_t],[\omega_t]\rangle = (2\pi\ay)^2 \int_{\Gamma_t} \omega_t = \tfrac{1}{2\pi\ay} \int_{\mu}\int_{|x_3|=\epsilon} \tfrac{\mathrm{dlog}(\ux)}{1-t\phi} =	\sum_{k\geq 0} t^k \int_{\mu}[\phi^k]_{x_3^0}\tfrac{dx_1}{x_1}\wedge\tfrac{dx_2}{x_2} =: \sum_{k\geq 0} v_k t^k.
\end{align*}
(Here $[-]_{x_3^0}$ takes terms of the Laurent polynomial constant in $x_3$.) By Theorem \ref{t51}, it suffices to compute
\begin{align*}
v_0 &= \int_0^1\int_{1-x_2}^1\frac{dx_1}{x_1}\wedge\frac{dx_2}{x_2}=-\int_0^1 \log(1-x_2)\frac{dx_2}{x_2}=\mathrm{Li}_2(1)=\zeta(2)\;\;\;\;\;\text{and}\\
v_1 &= \int_0^1\int_{1-x_2}^1 \left\{\begin{matrix} x_1^{-1}(4x_2^{-2}-6+2x_2)+\\(-6x_2^{-1}+6-x_2)+x_1(2x_2^{-1}-1)\end{matrix}\right\}\frac{dx_1}{x_1}\wedge\frac{dx_2}{x_2} = -10+6\zeta(2)
\end{align*}
to conclude that $LV=-10t$.  Normalization therefore yields
\begin{equation}\label{e53b}
\hat{V}(t)=\tfrac{1}{10}\zeta(2)+(-1+\tfrac{3}{5}\zeta(2))t+\cdots,
\end{equation}
as desired.
\subsection{$K_3$ of a $K3$: the $V_{12}$ HNF}\label{SIVB}

The LG mirror of the rational Fano $V_{12}=\mathit{OG}(5,10)\cap \mathsf{P}_1\cdots\cap \mathsf{P}_7$ is given by
\begin{equation}\label{e54a}
\phi(\ux)=\frac{(1-x_1)(1-x_2)(1-x_3)(1-x_1-x_2+x_1 x_2 - x_1 x_2 x_3)}{-x_1x_2x_3}.	
\end{equation}
\noindent\begin{minipage}{0.03\textwidth}
\end{minipage}
\begin{minipage}{0.3\textwidth}
\begin{tikzpicture}
\draw[black,ultra thick,fill=cyan] (-1.6,-1.7) -- (0.4,-1.7) -- (0.4,-0.7) -- (-1.6,-0.7) --(-1.6,-1.7);
\draw[black,ultra thick,fill=cyan] (0.4,-0.7) -- (1,1) -- (0,1) -- (-1.6,-0.7) -- (0.4,-0.7);
\draw[black,ultra thick,fill=cyan] (0,1) -- (1,1) -- (1.6,1.7) -- (0.6,1.7) --(0,1);
\draw[black,ultra thick,fill=cyan] (-1.6,-0.7) -- (-0.4,0.7) -- (0.6,1.7) -- (0,1) -- (-1.6,-0.7);
\draw[black,ultra thick,fill=cyan] (0.4,-1.7) -- (1.6,-0.3) -- (1.6,1.7) -- (1,1) -- (0.4,-0.7) -- (0.4,-1.7);
\draw[black,ultra thick,dotted] (-0.4,-0.3) -- (1.6,-0.3);
\draw[black,ultra thick,dotted] (-1.6,-1.7) -- (-0.4,-0.3) -- (-0.4,0.7);
\draw[->, gray,thick] (-1.2,0) -- (1.2,0);
\draw[->, gray,thick] (0,-1.2) -- (0,1.5);
\draw[->, gray,thick] (-0.75,-0.87) -- (0.75,0.87);
\end{tikzpicture}
\end{minipage}
\begin{minipage}{0.67\textwidth}	
This time the Picard-rank $19$ $K3$s $\tilde{X}_t$, smooth for $t\notin \Sigma=\{0,\sigma_+,\sigma_-,\infty\}$ ($\sigma_{\pm}=(-1\pm\sqrt{2})^4$), resolve 7 $A_1$ singularities on $X_t$.  The family $\cx$ is birational to that of \cite{BP} and underlies the proof of irrationality of $\zeta(3)$ \cite{Ke17}; indeed, $\alpha=\tfrac{1}{6}\zeta(3)$ \cite{Gv}.  Its PF operator
$$L=\delta_t^3 - t(2\delta_t+1)(17\delta_t^2+17\delta_t + 5) + t^2(\delta_t+1)^3$$
\end{minipage}

\noindent \begin{minipage}{0.7\textwidth}and (Ap\'ery) period sequence $$a_k:=\sum_{\ell=0}^k {k\choose \ell}^2 {k+\ell \choose \ell}^2 = 1,\, 5,\, 73,\,\ldots$$ reflect a VHS with monodromies of the same types as in $\S$\ref{SIVA} except at $t=\infty$ (where we get maximal unipotent monodromy). 
\end{minipage}
\begin{minipage}{0.3\textwidth}	
\[\begin{tikzpicture}[scale=0.8]
\node[black] at (1.2,-0.5) {$\sigma=\infty$};
\draw[gray,thick] (0,2.5) -- (0,0) -- (2.5,0);
\filldraw[black] (0,0) circle (3pt);
\filldraw[black] (1,1) circle (3pt);
\filldraw[black] (2,2) circle (3pt);
\draw[->,thick,blue] (1.8,1.8) -- (1.2,1.2);
\draw[->,thick,blue] (0.8,0.8) -- (0.2,0.2);
\node [blue] at (1.8,1.2) {$N_\infty$};
\draw[green,thick] (0,0) circle (6pt);
\end{tikzpicture}\]
\end{minipage}

Since $\phi$ is tempered, the symbol $\{\ux\}$ lifts to $\xi\in \mathrm{CH}^3(\cx\setminus \tilde{X}_0,3)$.  The birational map $\mathcal{I}\colon (\ux,t)\mapsto \left(\tfrac{x_3}{1-x_3},\tfrac{-(1-x_1)(1-x_2)}{1-x_1-x_2+x_1x_2-x_1x_2x_3},\tfrac{x_1}{1-x_1},\tfrac{1}{t}\right)$ from $\cx$ to itself, viewed as a correspondence, allows us to define $\mathcal{Z}:=\mathcal{I}^*\xi\in \mathrm{CH}^3(\cx\setminus\tilde{X}_{\infty},3)$.  The resulting THNF
\begin{multline*}
V(t)= \langle \tilde{\nu}_{\mathcal{Z}}(t),\omega_t\rangle \overset{\mathcal{I}}{=} \langle \tilde{\nu}_{\phi}(t^{-1}),t^{-1}\omega_{t^{-1}}\rangle=\int_{X_{t^{-1}}} R_3(\ux)\wedge d\left[ \tfrac{1}{(2\pi\ay)^3}\tfrac{\mathrm{dlog}(\ux)}{t-\phi(\ux)}\right] =\int d\left[\tfrac{R_3(\ux)}{(2\pi\ay)^3}\right] \wedge \tfrac{\mathrm{dlog}(\ux)}{\phi(\ux)-t} \\ = \int_{\RR_-^3}\tfrac{\mathrm{dlog}(\ux)}{t-\phi(\ux)} = -\sum_{k\geq 0}t^k \int_{\RR_-^3} \tfrac{\mathrm{dlog}(\ux)}{(\phi(\ux))^{k+1}}=\sum_{k\geq 0}t^k \left(\int_{[0,1]^3}\tfrac{\prod^3_{i=1} X_i^k (1-X_i)^k dX_i}{(1-X_3(1-X_1 X_2))^{k+1}} \right) =: \sum_{k\geq 0} v_k t^k
\end{multline*}
has $v_0=2\zeta(3)$ and $v_1=-12+10\zeta(3)$, which (again by Theorem \ref{t51}) is enough to conclude that $LV=-12t$.  But then normalization gives 
\begin{equation}\label{e54b}
\hat{V}(t)= \tfrac{1}{6}\zeta(3)+(-1+\tfrac{5}{6}\zeta(3))t+\cdots,	
\end{equation}
and in particular $\hat{V}(0)=\alpha$.  

\subsection{HNFs for $V_{14},V_{16},V_{18}$} \label{SIVC}

Again, the LG models are families of Picard-rank 19 $K3$s.  The irrational case $V_{14}=G(2,6)\cap\mathsf{P}_1\cap \cdots \cap \mathsf{P}_5$ is similar to $V_{10}$, with Laurent polynomial $$\phi(\ux)=\frac{(1-x_1-x_2-x_3)\{(1{-}x_2{-}x_3)(1{-}x_3)^2 -x_2(1{-}x_1{-}x_2{-}x_3)\}}{-x_1 x_2 x_3},$$ discriminant locus $\Sigma =\{0,\tfrac{1}{27},-1,\infty\}$, and PF operator $$L=\delta_t^3 - t(1+2\delta_t)(13\delta_t^2+13\delta_t+4)-3t^2(\delta_t+1)(3\delta_t+4)(3\delta_t+2).$$  The monodromy types are the same as for $V_{10}$, except for $T_{\infty}$, which acts on $(\psi_{\infty}\ch^2_v)^{2,0}$ resp. $(\psi_{\infty}\ch^2_v)^{0,2}$ by $e^{-\frac{2\pi\ay}{3}}$ resp. $e^{\frac{2\pi\ay}{3}}$.

The toric boundary divisor $x_1=0$ intersects $X_t$ in $\mathcal{C}_1=\{x_2=1-x_3\}$ and $\mathcal{C}_2=\{x_2=(1-x_3)^2\}$, and a cycle $\mathcal{Z}\in \mathrm{CH}^2(\cx{\setminus}\tilde{X}_{\infty},1)$ is given by $(\mathcal{C}_1,\tfrac{x_3}{1-x_3})+(\mathcal{C}_2,\tfrac{1-x_3}{x_3})$. Arguing as before, this yields $$V(t)=\sum_{k\geq 0}v_k t^k = \sum_{k\geq 0}t^k \int_{\mu} [\phi^k]_{x_1^0} \tfrac{dx_2}{x_2}\wedge \tfrac{dx_3}{x_3}\,,$$ where $\mu:=\{(x_1,x_2)\in\RR^2\mid 0\leq x_3\leq 1,\;(1-x_3)^2\leq x_2\leq 1-x_3\}$.  We compute $v_0=\zeta(2)$ and 
$$v_1=\int_0^1 \int_{(1-x_3)^2}^{1-x_3} \left\{\begin{matrix}2x_2 x_3^{-1}+(-x_3+4-3x_3^{-1})\\+x_2^{-1}x_3^{-1}(x_3-1)^3\end{matrix}\right\}\frac{dx_2}{x_2}\wedge \frac{dx_3}{x_3}=-7+4\zeta(2)\,,$$
hence that $LV=-7t$.  Renormalizing this gives $$\hat{V}(t)=\tfrac{1}{7}\zeta(2)+(-1+\tfrac{4}{7}\zeta(2))t+\cdots\;,$$ and $\hat{V}(0)=\tfrac{1}{7}\zeta(2)$ indeed matches the $\alpha$ from \cite{Gv}.

Turning to $V_{16}=\mathit{LG}(3,6)\cap\mathsf{P}_1 \cap\mathsf{P}_2\cap\mathsf{P}_3$ and $V_{18}=(G_2/P_2)\cap \mathsf{P}_1\cap\mathsf{P}_2$, we use
\begin{align*}
\phi(\ux) = \tfrac{(1-x_1-x_2-x_3)(1-x_1)(1-x_2)(1-x_3)}{-x_1x_2x_3}\;\;\;\text{resp.}\;\;\; \tfrac{(x_1+x_2+x_3)(x_1+x_2+x_3-x_1x_2-x_2x_3-x_1x_3+x_1x_2x_3)}{-x_1x_2x_3}
\end{align*}
\noindent from \cite{dS2} for our LG models, with $\Sigma=\{0,12\pm 8\sqrt{2},\infty\}$ resp. $\{0,9\pm 6\sqrt{3},\infty\}$ and
\begin{multline*}
L = \delta_t^3 - 4t(1+2\delta_t)(3\delta_t^2+3\delta_t+1)+16t^2(\delta_t+1)^3\\
\text{resp.}\; \delta_t^3 - 3t(1+2\delta_t)(3\delta_t^2+3\delta_t+1)-27t^2(\delta_t+1)^3.
\end{multline*}
\noindent The monodromy/LMHS types are the same as for $V_{12}$; we write $N=6$, $8$, or $9$ for $V_{2N}$, and put $I(t):=\tfrac{1}{M_N t}$ with $M_N=1,\tfrac{1}{16},\tfrac{-1}{27}$ respectively.  In each case there is an isomorphism $\ch_v^2\cong I^*\ch^2_v$ of $\QQ$-VHS.\footnote{This is easiest to see from the differential equation, but also follows from the fact that (for all five cases) the LG model of $V_{2N}$ realizes the canonical weight-$2$ rank-$3$ VHS over $X_0(N)^{+N}$, which for $N$ composite has an additional Fricke involution.}  For $N=8,9$ this is not an integral isomorphism so is induced by correspondences $\mathcal{I},\mathcal{I}^{-1}\in Z^2(\cx\times I^*\cx)_{\QQ}$ (with $\ci^*\circ (\ci^{-1})^*=\text{id}_{\ch^2_v}$) rather than a birational map; nevertheless, we may still define $\cz:=\ci^*\xi\in \mathrm{CH}^3(\cx{\setminus}\tx_{\infty},3)$.  Here we normalize $\ci$ to pull back an integral generator $\zeta_s$ of $(\HH_v^2)^{T_{\infty}}$ back to $\gamma_t\in (\HH^2_v)^{T_0}$, where $s=I(t)$.

Since the integrals $\int_{\RR_-^3}\tfrac{\text{dlog}(\ux)}{(\phi(\ux))^{k+1}}$ are quite difficult for $N=8,9$, we use a different strategy than in \S\ref{SIVB}.  As a section of $\ch^{2,0}_{v,e}\cong\co_{\PP^1}(1)$, $\omega_t$ has divisor $[\infty]$, and so $(\ci^{-1})^*\omega_{I(t)}=C_N t\omega_t$ for some $C_N\in\CC^*$.  Write $\zeta_s^{\vee}\in (\HH_v^2)_{T_{\infty}}$ for the element dual to $\zeta_s$, so that $\lim_{s\to\infty} s\omega_s = \tfrac{-1}{(2\pi\ay)^2}\mathrm{Res}_{\tx_{\infty}}(\tfrac{\text{dlog}(\ux)}{\phi(\ux)})=\mathfrak{r}_N\zeta_{\infty}^{\vee}$ in $H_2(\tx_{\infty})$ where $\mathfrak{r}_N:=\tfrac{-1}{(2\pi\ay)^3}\mathrm{Res}^3_p(\tfrac{\text{dlog}(\ux)}{\phi(\ux)})=1,\,\tfrac{1}{2}$, resp. $\tfrac{1}{\sqrt{-3}}$ (for some triple-normal-crossing point $p\in\tx_{\infty}$).  This yields
$$C_N = \lim_{t\to 0}\tfrac{1}{t}\langle \gamma_t,(\ci^{-1})^*\omega_{I(t)}\rangle =\lim_{s\to\infty}M_N s\langle(\ci^{-1})^* \gamma_{I(s)},\omega_s\rangle = \lim_{s\to\infty} M_N\langle \zeta_s,s\omega_s\rangle = M_N\mathfrak{r}_N.$$
Write $\Lambda$ for $L$ with $t$ replaced by $s$, we have $L=\tfrac{-1}{M_N s}\Lambda \tfrac{1}{s}$.  Applying this to
\begin{equation}\label{e55a}
V(t)= \langle \tilde{\nu}_{\cz}(t),\omega_t\rangle =\tfrac{1}{C_N t}\langle \ci^* \tilde{\nu}_{\phi}(s),(\ci^{-1})^*\omega_s\rangle =\tfrac{M_N s}{C_N}\langle \tilde{\nu}_{\phi}(s),\omega_s\rangle
\end{equation}
yields $LV=\tfrac{-1}{C_N s}\Lambda \langle \tilde{\nu}_{\phi}(s),\omega_s\rangle=\tfrac{-D_N}{C_N s}=-\tfrac{D_N}{\mathfrak{r}_N} t$, where $D_N=12,\,16,$ resp. $9$ is the constant from Remark \ref {r43b}.  Moreover, thinking of $\zeta_{\infty}^{\vee}$ as a ``membrane stretched once around $X_{\infty}$'', taking $\lim_{s\to \infty}$ of \eqref{e55a} gives
\begin{equation}\label{e55b}
V(0)=-\tfrac{1}{\mathfrak{r}_N}\int_{X_{\infty}} R_3(\ux)\wedge \tfrac{1}{(2\pi\ay)^2}\mathrm{Res}_{X_{\infty}}\left( \tfrac{\text{dlog}(\ux)}{\phi(\ux)}\right) = \int_{\zeta_{\infty}^{\vee}} R_3(\ux)|_{X_{\infty}}
\end{equation}
for $\zeta_{\infty}^{\vee}$ in suitably general position;\footnote{Compare \cite[Thm. 4.2(b) + Cor. 4.3]{Ke17}, which this generalizes.} and $\hat{V}(0)=\tfrac{\mathfrak{r}_N}{D_N}V(0)$.

We now use \eqref{e55b} to verify that $\hat{V}(0)$ recovers the Ap\'ery constants in \cite{Gv}.  For $N=6$, the computation in \cite[\S5.3]{Ke17} (with $\zeta_{\infty}^{\vee}=-\psi$) gives $\int_{\zeta_{\infty}^{\vee}}R_3(\ux)=2\zeta(3)$, recovering $\hat{V}(0)=\tfrac{\zeta(3)}{6}$.  For $N=8$, putting $\zeta_{\infty}^{\vee}$ in general position is tricky so we use the first expression in \eqref{e55b}. As $R_3(\ux)=\log(x_1)\tfrac{dx_2}{x_2}\wedge\tfrac{dx_3}{x_3}+2\pi\ay\log(x_2)\tfrac{dx_3}{x_3}\delta_{T_{x_1}}+(2\pi\ay)^2\log(x_3)\delta_{T_{x_1}\cap T_{x_2}}$ is nontrivial only on the component $\{x_1=1-x_2-x_3\}\subset X_{\infty}$, with only its third term surviving against the $(2,0)$ residue form, this yields 
\begin{multline*}
V(0) = -2\int_{T_{1-x_2-x_3}\cap T_{x_2}}\log(x_3)\tfrac{dx_2\wedge dx_3}{(1-x_2)(1-x_3)(x_2+x_3)} =-2\int_1^{\infty}\tfrac{\log(x_3)}{1-x_3}\left(\int_{1-x_3}^0 \tfrac{dx_2}{(1-x_2)(x_2+x_3)}\right)dx_3\\ \underset{u=x_3^{-1}}{=} 4 \int_0^1 \frac{\log^2(u)}{1-u^2}du =4(\mathrm{Li}_3(1)-\mathrm{Li}_3(-1))=7\zeta(3),
\end{multline*}
\noindent hence $\hat{V}(0)=\tfrac{7}{32}\zeta(3)$.

Finally, for $N=9$ we first replace $\{\ux\}$ (hence $\xi$, and $\cz$) by the equivalent symbol $\{\uz\}$, where $z_1=\tfrac{-x_3}{x_1+x_2}$, $z_2=-\tfrac{x_1}{x_2}$, and $z_3=\tfrac{x_1x_2}{x_1+x_2}$. In these coordinates, $$\phi(\ux(\uz))=z_1^{-1}z_3^{-1}(1-z_1)\{(1-z_3)-z_1(1-(1-z_2)z_3)(1-(1-z_2^{-1})z_3)\}$$ and so $X_{\infty}=X_{\infty}'\cup X_{\infty}''=\{z_1=1\}\cup \{z_1=\varphi(z_2,z_3):=\tfrac{1-z_3}{(1-(1-z_2)z_3)(1-(1-z_2^{-1})z_3)}\}$, with $\cC_{\infty}:=X_{\infty}'\cap X_{\infty}''$ described by $z_3=\mathfrak{z}(z_2):=\tfrac{1-z_2-z_2^{-1}}{(1-z_2)(1-z_2^{-1})}$. Clearly $R_3(\ux)|_{X_{\infty}'}=0$. For $\zeta_{\infty}^{\vee}\cap X_{\infty}''$, which must bound on $\cC_{\infty}$, we may take the 2-chain parametrized by $(z_2,z_3)=\{(e^{\ay\theta},\rho\mathfrak{z}(e^{\ay\theta}))\mid \theta\in[-\tfrac{\pi}{3},\tfrac{\pi}{3}],\,\rho\in[0,1]\}$. This yields
\begin{multline*}
V(0)= \int_{\zeta_{\infty}^{\vee}\cap X_{\infty}''} \log(\varphi(z_2,z_3))\frac{dz_2}{z_2}\wedge\frac{dz_3}{z_3} = \int_{\partial(\zeta_{\infty}^{\vee}\cap X_{\infty}'')} \left( \begin{matrix}\mathrm{Li}_2(\mathfrak{z}(z_2))-\mathrm{Li}_2((1-z_2)\mathfrak{z}(z_2))\\-\mathrm{Li}_2((1-z_2^{-1})\mathfrak{z}(z_2))\end{matrix}\right)\frac{dz_2}{z_2}\\
=\int_{e^{-\frac{\pi\ay}{3}}}^{e^{\frac{\pi\ay}{3}}}\left( 4\log(1-u)+\log(u)\right) \log(u)\frac{du}{u} = \left[ 4\mathrm{Li}_3(u)-4\mathrm{Li}_2(u)\log(u)+\tfrac{1}{3}\log^3(u)\right]_{e^{-\frac{\pi\ay}{3}}}^{e^{\frac{\pi\ay}{3}}}=\tfrac{4\pi^3\ay}{27},
\end{multline*}
\noindent whereupon $\hat{V}(0)=\tfrac{V(0)}{9\sqrt{-3}}=\tfrac{4\pi^3}{3^5\sqrt{3}}=\tfrac{1}{3}L(\chi_3,3)$.

\section{Ap\'ery and normal functions}\label{SV}

In this brief final section, we introduce a framework for studying the normal functions arising in connection with the Arithmetic Mirror Symmetry Problem (including the examples in \S\ref{SIV}), and propose some terminology.

\begin{defn}\label{d6a}
The \emph{Ap\'ery motive} is $\mathsf{A}_{\phi}:=H^n(\cx\setminus \tilde{X}_{\infty},\tilde{X}_0)/\mathrm{Ph}_{\Sigma^*}$ from \eqref{e44c}, or (if one prefers) its underlying mixed motive.
\end{defn}

\noindent We dig into its structure a bit:  there are exact sequences of MHS
\begin{equation}\label{e61}
\xymatrix@R=0.7em{&&& 0 \\ &&& (\psi_{\infty}\ch_v^{n-1})_{T_{\infty}}(-1) \ar [u] \\ 0 \ar [r] & (\psi_0\ch_v^{n-1})^{T_0} \ar [r] & \mathsf{A}_{\phi} \ar [r] & \IH^1(\mathbb{A}^1,\ch_v^{n-1}) \ar [r] \ar [u] & 0 \\ &&& \IH^1(\PP^1,\ch^{n-1}_v) \ar [u] \\ &&& 0 \ar [u] }
\end{equation}
where $(\cdot)_{T_{\infty}}=\mathrm{coker}(T_{\infty}-I)$, $(\cdot)^{T_0}=\ker(T_0 - I)$, and $\mathbb{A}^1$ means $\PP^1\setminus \{t=\infty\}$.  The parabolic cohomology $\IH^1(\PP^1,\ch^{n-1}_v)$ is pure of weight $n$ and rank 
\begin{equation}\label{e62}
{ih}^1(\PP^1,\ch^{n-1}_v)=\sum_{\sigma\in\Sigma}\mathrm{rk}(T_{\sigma}-I) - 2r.
\end{equation}
\begin{defn}\label{d6b}
$\ch^{n-1}_v$ (or $\phi$) is \emph{extremal} if \eqref{e62} is zero.
\end{defn}
\noindent Recall that if $\phi$ is tempered, the coordinate symbol $\{\ux\}$ lifts to $\xi\in \mathrm{CH}^n(\cx\setminus \tilde{X}_0,n)$.  The cycle class of $\mathrm{Res}(\xi)\in\mathrm{CH}^{n-1}(\tilde{X}_0,n-1)$ yields an embedding $\QQ(-n)\hookrightarrow H^{n+1}_{\tilde{X}_0}(\cx)\cong H_{n-1}(\tilde{X}_0)(-n)$, or dually\footnote{The first map in the portion $H_{n+1}(\tilde{X}_0)(-n)\to H^{n-1}(\tilde{X}_0)\to (\psi_0\ch^{n-1}_v)^{T_0}\to 0$ of the Clemens-Schmid sequence has pure weight $n-1$, and so the second map has a splitting $(\psi_0 \ch^{n-1}_v)^{T_0}\hookrightarrow H^{n-1}(\tilde{X}_0)$ which is an isomorphism in weights $<n-1$.  Dualizing the embedding yields $H^{n-1}(\tilde{X}_0)\twoheadrightarrow \QQ(0)$, and \eqref{e63} is the composition.} a splitting
\begin{equation}\label{e63}
\varepsilon\colon (\psi_0\ch_v^{n-1})^{T_0}\twoheadrightarrow \QQ(0).
\end{equation}

Suppose then that $\phi$ is tempered and extremal, and that (for some $p\in\mathbb{N}$) there exists an embedding
\begin{equation}\label{e64}
\mu\colon \QQ(-p)\hookrightarrow (\psi_{\infty}\ch^{n-1}_v)_{T_{\infty}}(-1).
\end{equation}
\noindent Then from \eqref{e61} we obtain the diagram
\begin{equation}\label{e65}
\xymatrix@R=1em{0\ar[r] & \QQ(0) \ar @{=} [d] \ar [r] &\mu^*\varepsilon_* \mathsf{A}_{\phi} \ar [r] \ar @{^(->} [d] & \QQ(-p) \ar @{^(->} [d]^{\mu} \ar [r] & 0 \\ 0 \ar [r] & \QQ(0) \ar [r] & \varepsilon_*\mathsf{A}_{\phi} \ar [r] & (\psi_{\infty}\ch^{n-1}_v)_{T_{\infty}}(-1) \ar [r] & 0 \\ 0 \ar [r] & (\psi_0 \ch_v^{n-1})^{T_0} \ar @{->>} [u]^{\varepsilon} \ar [r] & \mathsf{A}_{\phi} \ar [r] \ar @{->>} [u] & \IH^1(\mathbb{A}^1,\ch^{n-1}_v) \ar @{=} [u] \ar [r] & 0} 
\end{equation}
\noindent with exact rows.  Under $\mathrm{Ext}^1_{\text{MHS}}(\QQ(-p),\QQ(0))\cong \CC/\QQ(p)$, define
\begin{equation}\label{e66}
\alpha_{\phi}(\mu)\in \CC/\QQ(p)
\end{equation}
\noindent to be the image of the extension class of the top row of \eqref{e65}.

\begin{example}\label{e6a}
The Laurent polynomials considered in \S\S\ref{SIVA}-\ref{SIVC} are tempered and extremal, with $(\psi_{\infty}\ch^2_v)_{T_{\infty}}(-1)\cong \QQ(-2)$ resp. $\QQ(-3)$ for $V_{10},V_{14}$ resp $V_{12},V_{16},V_{18}$.  (Indeed, $\mu$ and $\varepsilon$ are both isomorphisms.)  In view of \eqref{e610} below, in each case $\alpha_{\phi}(\mu)$ is just $V(0)$ viewed modulo $\QQ(p)$.  But for $V_{10}$, $V_{14}$, and $V_{18}$, $V(0)$ is \emph{in} $\QQ(p)$ and so $\alpha_{\phi}(\mu)$ is trivial!
\end{example}

From the example we see the importance of presenting \eqref{e66} \emph{as a limit of a HNF}, since by canonically normalizing the latter (Remark \ref{r51a}) we may then refine \eqref{e66} to a well-defined complex number.  To do this, note that the same proof as for Proposition \ref{p44a} expresses the VMHS $\mathcal{A}_{\phi,t}:=H^n(\cx\setminus \tilde{X}_{\infty},\tilde{X}_t)/\mathrm{Ph}_{\Sigma\setminus\{\infty\}}$ as an extension
\begin{equation}\label{e67}
0\to \ch_v^{n-1}\to \mathcal{A}_{\phi}\to \IH^1(\mathbb{A}^1,\ch^{n-1}_v)\to 0\,.
\end{equation}
So we arrive at this article's eponymous
\begin{defn}\label{d6c}
The pullback 
\begin{equation}\label{e68}
0 \to \ch^{n-1}_v \to \mu^* \mathcal{A}_{\phi} \to \QQ(-p)\to 0
\end{equation}
of \eqref{e67} under \eqref{e64} is called an \emph{Ap\'ery extension}.
\end{defn}
\noindent We may view \eqref{e68} as a higher normal function
\begin{equation}\label{e69}
\nu_{\mu}\in \mathrm{ANF}(\ch^{n-1}_v(p))
\end{equation}
\noindent which is singular at $t=\infty$ and only there,\footnote{see the proof of \cite[Thm. 10.8]{Ke20}.} and we define and normalize $V_{\mu}(t):=\langle \nu_{\mu,t},\omega_t\rangle$ as in \S\ref{SIV1}.  Since $\mu^*\mathsf{A}_{\phi}\cong (\psi_0\mu^*\mathcal{A}_{\phi})^{T_0}$ and $\varepsilon$ is induced by pairing with $\lim_{t\to 0}\omega_t$, we conclude that 
\begin{equation}\label{e610}
V_{\mu}(0)\mapsto \alpha_{\phi}(\mu)\;\;\;\;\;\text{under}\;\;\;\;\;\CC\twoheadrightarrow\CC/\QQ(p).
\end{equation}
\noindent At least when $\phi$ is tempered and extremal, and $\IH^1(\mathbb{A}^1,\ch^{n-1}_v)$ is split Hodge-Tate, it is these $V_{\mu}(0)$ which are expected to produce (up to $\bar{\QQ}^*$) the interesting Ap\'ery constants.  (In the absence of these conditions, of course, the situation will be more complicated.)

Conversely, any $\nu\in \mathrm{ANF}(\ch_v^{n-1}(p))$ nonsingular off $\infty$ arises as in \eqref{e68}.  In our fine examples, $\IH^1(\mathbb{A}^1,\ch^2_v)\cong (\psi_{\infty}\ch^2_v)_{T_{\infty}}(-1)$ has rank one.  So this finishes off the uniqueness part of Problem \ref{c52} in each case, completing the proof of Theorem \ref{tm}.

Now according to the BHC (since $\cx{\setminus}\tilde{X}_{\infty}$ is defined over $\bar{\QQ}$), there exists a higher cycle $\mathcal{Z}_{\mu}\in\mathrm{CH}^p(\cx{\setminus}\tilde{X}_{\infty},2p-n)_{\QQ}$ with $\nu_{\mu}=\nu_{\mathcal{Z}_{\mu}}$.  This provides a mechanism for explaining the arithmetic content of the Ap\'ery constants.  Let $K\subset \bar{\QQ}$ be the field of definition of $\mathcal{Z}_{\mu}$.  By \cite[Cor. 5.3ff]{7K}, $V_{\mu}(0)$ may be interpreted as the image of $\mathcal{Z}_0:=\imath^*_{\tilde{X}_0}\mathcal{Z}_{\mu}$ under 
\begin{equation}\label{e611}
H^n_{\mathcal{M}}(\tilde{X}_{0,K},\QQ(p))\overset{\mathrm{AJ}}{\to} J(H^{n-1}(\tilde{X}_0)(p))\overset{\langle\cdot,\omega_0\rangle}{\twoheadrightarrow}\CC/\QQ(p)\,,
\end{equation}
\noindent where the second map comes from temperedness of $\phi$.  Since $\tilde{X}_0=\cup_i Y_i$ is a NCD, we have a spectral sequence $E_1^{a,b}=Z^p(\tilde{X}^{[a]}_0,2p-n-b)\,\underset{a+b=*}{\implies}H_{\mathcal{M}}^{n+*}(\tilde{X}_0,\QQ(p))$ where $\tilde{X}_0^{[a]}:=\amalg_{|I|=a+1}(\cap_{i\in I}Y_i)$.  The induced filtration $\mathscr{W}_{\bullet}$ [op. cit., \S3] has bottom piece 
\begin{align*}
\mathscr{W}_{-n+1}H_{\mathcal{M}}^n(\tilde{X}_{0,K},\QQ(p)) &\cong \mathrm{coker}\{\mathrm{CH}^p(\tilde{X}_{0,K}^{[-n+2]},2p-1)\to \mathrm{CH}^p(\tilde{X}_{0,K}^{[-n+1]},2p-1)\} \\
&\cong \mathrm{CH}^p(\mathrm{Spec}(K),2p-1)\,,
\end{align*}
\noindent and \eqref{e611} restricts to the Borel regulator on this piece.

\begin{example}\label{e6b} 
In \S\S\ref{SIVA}-\ref{SIVC}, $\mathcal{Z}_0$ belongs to $\mathscr{W}_{-2}H^3_{\mathcal{M}}(\tilde{X}_{0,K},\QQ(p))$, with $K=\QQ(\sqrt{-3})$ for $V_{18}$ and $K=\QQ$ for the other $V_{2N}$'s.  Since each $\alpha_{\mathrm{X}^{\circ}}$ is also real by construction, and $\mathfrak{k}$ belongs to $K$, Borel's theorem (together with part (c) of Problem \ref{c52}) \emph{forces} $\alpha_{\mathrm{X}^{\circ}}$ to be in $\QQ(2)$ ($V_{10},V_{14}$), $\zeta(3)\QQ$ ($V_{12},V_{16}$), and $\sqrt{-3}\QQ(3)$ ($V_{18}$) respectively, before any computation is done.
\end{example}

\begin{rem}\label{r6a}
We finally owe the reader an explanation regarding the flip in perspective from $\psi_{\infty}\mathcal{A}_{\phi}^{\dagger}$ (and the limit of the coordinate-symbol normal function at $\infty$) to $\psi_0\mathcal{A}_{\phi}$ (and the limit of Ap\'ery normal functions at $0$), specifically the ``computational nonviability'' claimed for the former.  First of all, if we choose the lift $\tilde{\nu}_{\phi}$ to be single-valued around $\infty$, it is a section of the dual-canonical extension $(\ch^{n-1}_v)^e$; since $\omega$ is a section of $\ch^{n-1}_{v,e}$ with a simple zero at $\infty$, they do indeed pair to a holomorphic function $V_{\phi}$ on a disk $D_{\infty}$, but one with $V_{\phi}(\infty)=0$.  Replacing $\omega$ by $\hat{\omega}:=t\omega$ gives $\hat{V}_{\phi}:=tV_{\phi}$, from which we can in principle read off the limiting extension class if we know the limits of the invariant periods of $\hat{\omega}$ at $\infty$ (which is already nontrivial); and this was essentially the method used for $V_{16}$ and $V_{18}$.  

But this approach becomes problematic when $T_{\infty}$ is non-unipotent, intuitively because $\hat{\omega}$ then has periods which blow up at $\infty$, and we lack a suitable representative for $\hat{\omega}(\infty)$ on $\tilde{X}_{\infty}$.  More precisely, if we let $\rho\colon D_{\infty}\to D_{\infty}$ be the base-change (ramified at $\infty$) which kills $T_{\infty}^{\text{ss}}$, the pullback $\rho^*\omega$ is \emph{not} a section of $(\rho^*\ch_v^{n-1})_e$ over $D_{\infty}$, and we cannot use \cite[Cor. 5.3]{7K} to compute $\hat{V}_{\phi}(\infty)$.  So while, for (say) $V_{10}$ and $V_{14}$, one can show (abstractly, from its inhomogeneous equation) that $\hat{V}_{\phi}(\infty)$ is a nonzero complex number, it does not seem nearly as accessible as the $V_{\mu}(0)$ values computed in \S\S\ref{SIVA} and \S\S\ref{SIVC}.
\end{rem}

\begin{rem}\label{r6b}
In addition to its implications for the arithmetic of $\alpha_{\mathrm{X}^{\circ}}$, Problem \ref{c52} appears to encode interesting algebro-geometric predictions about Fanos.  To just give the idea in the simplest possible case, suppose $\mathsf{F}^m$ is a Fano $m$-fold, with $H^*_{\text{prim}}(\mathsf{F}^m)$ of rank two, concentrated in weights $0$ and $2w$, with degree $2$ QDE.  Let $\mathsf{P_i}$ denote general hyperplanes in some $\PP^M$ in which $\mathsf{F}^m$ is minimally embedded, and accept the idea that --- as long as $\mathsf{F}^{m-\ell}:=\mathsf{F}^m\cap \mathsf{P}_1\cap\cdots\cap\mathsf{P}_{\ell}$ remains Fano --- \ref{c52}(a)-(c) continue to hold and the Ap\'ery constant $\alpha$ remains unchanged (cf. Remark \ref{r33a}).  At first, $\alpha=\alpha_{\mathsf{F}^m}$ is computed by the LMHS of the LG model; but after hyperplane sections kill off the second Lefschetz string in $H^*$, $\alpha=\alpha_{\mathsf{F}^{m-\ell}}$ is computed by the limit of a nontorsion extension of $\ch^{m-\ell-1}$ by $\QQ(-w)$ (even if $\alpha$ is ``torsion'').  As soon as $F^{w-1}\ch^{m-\ell-1}=\{0\}$, however, Griffiths transversality forces such normal functions to be flat and thus torsion.  So for Problem \ref{c52} to be consistent, $\mathsf{F}^{m-\ell}$ cannot be Fano for $m-\ell<w$; that is, the \emph{index} $\mathtt{i}(\mathsf{F}^m)$ is $\leq m-w+1$.  (Recall that $\mathtt{i}$ is defined by $-K_F=\mathtt{i}\mathsf{h}$.)  A quick perusal of examples in this paper suggests that this is sharp:  $G(2,5)$, $\mathit{OG}(5,10)$, and $\mathit{LG}(3,6)$ (but not $G(2,6)$ or $G_2/P_2$) each have rank two $H_{\text{prim}}^*$ and $d=2$; while their respective $(m,w,\mathtt{i})$ are $(6,2,5)$, $(10,3,8)$, resp. $(6,3,4)$.
\end{rem}

\end{document}